\newcounter{lemmacounter}
\newcounter{thmcounter}
\newtheorem{lemma}[lemmacounter]{Lemma}
\newtheorem{example}[lemmacounter]{Example}
\newtheorem{proposition}[lemmacounter]{Proposition}
\newtheorem*{statementr}{Statement($r$)}
\newtheorem{corollary}[thmcounter]{Corollary}
\newtheorem*{conjecture*}{Conjecture}
\newtheorem{theorem}[thmcounter]{Theorem}
\newtheorem*{theorem*}{Theorem}
\newcommand{\statement}[1]{\textbf{Statement({#1})}}
\newcommand{\fS}{\mathfrak S}
\newcommand{\IR}{{\mathbb R}}
\newcommand{\IC}{{\mathbb C}}
\newcommand{\IRan}{{\mathbb R}_{\mathrm{an}}}
\newcommand{\IRexp}{{\mathbb R}_{\mathrm{exp}}}
\newcommand{\IZ}{{\mathbb Z}}
\newcommand{\IN}{{\mathbb N}}
\newcommand{\IQ}{\mathbb{Q}}
\newcommand{\IQbar}{\overline{\IQ}}
\newcommand{\ssm}{\smallsetminus}
\newcommand{\zeroset}[1]{\mathcal{Z}({#1})}
\newcommand{\mat}[2]{{\mathrm{Mat}}_{#1}({#2})}
\newcommand{\dist}[1]{{\mathrm{dist}}({#1})}
\newcommand{\dists}[1]{{\mathrm{dist}}^{*}({#1})}
\newcommand{\fr}[1]{{\mathrm{fr}}({#1})}
\newcommand{\alg}[1]{{#1}^{\mathrm{alg}}}
\newcommand{\loccit}{\textit{loc.cit.}}
\renewcommand{\subset}{\subseteq}
\renewcommand{\supset}{\supseteq}
\newcommand{\qac}{quasi-algebraic cell}
\newcommand{\qacs}{quasi-algebraic cells}
\newcommand{\QACS}{Quasi-Algebraic Cells}
\newcommand{\Qacs}{Quasi-algebraic cells}
\newcommand{\sing}[1]{\mathrm{Sing}({#1})}
\newcommand{\degvar}{e}
\newcommand{\nbhd}[1]{{\mathcal{N}}({#1})}
\newcommand{\cJ}{\mathcal{J}}
\subjclass[2010]{Primary: 11J83. Secondary: 03C64 and 11G50}
\begin{document}
\title{Diophantine Approximations on Definable Sets}
\author[Philipp Habegger]{P. Habegger}
\address{Department of Mathematics and Computer Science, University of Basel, Spiegelgasse 1, 4051 Basel,
Switzerland}
\email{philipp.habegger\makeatletter @\makeatother unibas.ch}
\date{\today}

\maketitle

\begin{abstract}
Consider the vanishing locus of a 
real analytic function on $\IR^n$ restricted to  $[0,1]^n$.
We bound the number of rational points of bounded height that approximate this
 set very well. 
Our result is formulated and proved in the context of
o-minimal structure which give a general framework to work with
sets mentioned above. 
It complements the theorem of Pila-Wilkie that yields a bound of the
same quality for the number of
rational points of bounded height that lie on a definable set. 
We focus our attention on polynomially bounded o-minimal
structures, allow algebraic points of bounded degree,
and provide an estimate that is uniform over some families of
definable sets. 
We apply these results to  study   fixed length sums of roots of
unity that are small in modulus. 
\end{abstract}

\tableofcontents

\section{Introduction}
\label{sec:intro}

The starting point of our investigation is 
the Counting Theorem \cite{PilaWilkie} of 
Pila and Wilkie in a fixed o-minimal structure.  
In Section \ref{sec:notation} we recall the definition of an o-minimal
structure. 
If not stated otherwise, sets and functions are called definable if they
are definable in this o-minimal structure.
The height of $a/b$ where $a$ and $b$ are coprime integers with $b\ge
1$ 
is $H(a/b) = \max\{|a|,b\}$.  The height of
$(q_1,\ldots,q_n)\in \IQ^n$ is
$\max \{H(q_1),\ldots,H(q_n)\}$ for an integer $n\ge 1$. 
For any subset  $X\subset\IR^n$ we write $\alg{X}$ for the algebraic
locus of $X$, i.e. the union of all connected
real semi-algebraic sets of positive dimension that are contained in
$X$.
Roughly speaking, Pila and Wilkie show  that rational points of bounded height on a definable set
are concentrated on its algebraic locus.

\begin{theorem}[Pila-Wilkie, Theorem 1.8 \cite{PilaWilkie}]
\label{thm:pilawilkie}
Let $X\subset\IR^n$ be a definable set and let $\epsilon >0$. There
exists a constant $c=c(X,\epsilon)>0$ such that
\begin{equation*}
  \#\left\{ q \in (X\ssm \alg{X}) \cap \IQ^n : H(q)\le T \right\} \le
  cT^\epsilon
\end{equation*}
for all $T\ge 1$. 
\end{theorem}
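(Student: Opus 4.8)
The plan is to reconstruct the Bombieri--Pila determinant method in the form given by Pila and Wilkie, resting on two pillars: the $C^r$-parametrization theorem for o-minimal structures, and an induction on $\dim X$. I would induct on $m=\dim X$. The case $m=0$ is trivial, since $X$ is then finite. If $m=n$, the top-dimensional cells of a cell decomposition of $X$ are open in $\IR^n$, so each of their points has an open ball neighborhood inside $X$ and hence lies in $\alg{X}$; thus $X\ssm\alg{X}$ is contained in the (definable) union $U$ of the cells of dimension $<n$, and since $\alg{U}\subset\alg{X}$ the inductive hypothesis applied to $U$ finishes this case. So assume $1\le m\le n-1$. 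Fix $\epsilon>0$, choose an integer $d=d(n,m,\epsilon)$ (large, pinned down below) and then an integer $r=r(n,m,d)$ larger still. The $C^r$-parametrization theorem (Gromov and Yomdin in the semialgebraic and subanalytic settings, Pila and Wilkie in general) supplies finitely many definable maps $\phi\colon(0,1)^{m'}\to\IR^n$ with $m'\le m$ whose images cover $X$ and all of whose partial derivatives of order $\le r$ are bounded by $1$ in sup norm. The images with $m'<m$ lie in a definable subset of $X$ of dimension $<m$, handled by induction; so it suffices to bound the rational points of height $\le T$ on the image of one such $\phi$ with $m'=m$.

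For that I would run the determinant argument on small cubes. Partition $(0,1)^m$ into $\asymp T^{\gamma m}$ cubes of side $T^{-\gamma}$, with $\gamma=\gamma(n,m,d)>0$ small. Fix a cube, let $P_1,\dots,P_N$ be the rational points of height $\le T$ on $X$ whose $\phi$-preimage lies in it, and let $g_1,\dots,g_D$ list the monomials of degree $\le d$ in $n$ variables, $D=\binom{n+d}{n}$. I would estimate $D\times D$ minors of the matrix $\big(g_\ell(P_k)\big)$ two ways. Arithmetically, such a minor is rational with denominator dividing $T^{ndD}$, hence is $0$ or of modulus $\ge T^{-ndD}$. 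Analytically, Taylor-expanding $g_\ell\circ\phi$ to order $<r$ on the cube and using the derivative bounds gives modulus $\le c(n,m,d,r)\,T^{-\gamma\sigma}$, where $\sigma$ is the sum of the $D$ smallest values of $|\beta|$ over $\beta\in\IN^m$ (so $\sigma\gg D^{1+1/m}$), valid once $r$ exceeds the largest such $|\beta|$, which is $\asymp D^{1/m}$. Since $D\asymp d^n$ one has $ndD/\sigma\ll d^{\,1-n/m}\to0$ as $d\to\infty$, precisely because $m\le n-1$; so I first fix $d$ so that there is an admissible $\gamma$ with $ndD/\sigma<\gamma<\epsilon/m$, then $r$ large enough to validate the Taylor bound. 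For $T$ beyond a bound depending on the constants (small $T$ being absorbed into the final constant) every such minor vanishes, so the $N$ points of the cube satisfy a nontrivial polynomial relation of degree $\le d$, i.e. lie on one real algebraic hypersurface of degree $\le d$; when $N<D$ the same follows by interpolation. Hence the rational points of height $\le T$ on $X$ lie on $\le c\,T^{\gamma m}\le c\,T^{\epsilon}$ real algebraic hypersurfaces of degree $\le d$.

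Finally I would close the induction by intersecting. For each such hypersurface $Z$, the set $X\cap Z$ is definable with $\alg{(X\cap Z)}\subset\alg{X}$, so a non-algebraic rational point of $X$ on $Z$ is a non-algebraic rational point of $X\cap Z$, and it suffices to bound these on each $X\cap Z$. If $\dim(X\cap Z)<m$, the inductive hypothesis applies directly, with $\epsilon$ replaced by $\epsilon/2$ so that the product with the $O(T^{\epsilon})$ hypersurfaces stays $O(T^{\epsilon})$. The delicate case is $\dim(X\cap Z)=m$: when $m=n-1$ the set $X\cap Z$ is a full-dimensional definable subset of the algebraic hypersurface $Z$, hence outside a lower-dimensional remainder it is relatively open in $Z$ and so contained in $\alg{X}$ (each of its points having a semialgebraic, connected, positive-dimensional neighborhood inside $X$), while the remainder is handled by induction; when $m<n-1$ a single hypersurface does not suffice, and one must instead iterate the construction — reparametrize $X\cap Z$, apply the determinant estimate again with a larger degree to confine its rational points to further hypersurfaces, and repeat, at most $n-m$ times, until the accumulated common zero locus has dimension $m$ and the previous case applies. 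Unwinding the at most $m$-fold recursion and summing over the finitely many $\phi$ yields the stated bound, with $c$ depending only on $X$ and $\epsilon$. I expect the main obstacle to be twofold: the $C^r$-parametrization theorem, which I would invoke as a black box but which is itself a substantial piece of o-minimal geometry; and the bookkeeping of the induction — tracking the growing degrees and the accumulation of the $T^{\gamma}$ factors, and in particular the equal-dimension branch just described — which is exactly what Pila and Wilkie systematize by recasting everything in terms of \emph{blocks} and definable families of blocks.
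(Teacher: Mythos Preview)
The paper does not prove this theorem. Theorem~\ref{thm:pilawilkie} is stated with the attribution ``Pila--Wilkie, Theorem 1.8 \cite{PilaWilkie}'' and no proof is given; it serves purely as background motivation for the paper's own approximation results (Theorems~\ref{thm:approx2}--\ref{thm:approxqac}). So there is no ``paper's own proof'' to compare your proposal against.

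That said, your sketch is a faithful outline of the original Pila--Wilkie argument: induction on $\dim X$, the $C^r$-parametrization theorem, the Bombieri--Pila determinant method on small cubes to trap rational points on few hypersurfaces of bounded degree, and then intersecting and recursing. You have also correctly identified the two genuine difficulties --- the parametrization theorem as a black box, and the need to work uniformly over definable families so that the recursion on $X\cap Z$ (with $Z$ varying over $O(T^\epsilon)$ hypersurfaces) does not blow up the constants. Your final paragraph essentially concedes that the induction as written is not quite self-contained and that the family/block formalism is what makes it close; that is accurate, and it is the main gap between your sketch and a complete proof.

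It may interest you that the present paper, in proving its own approximation theorems, replaces the determinant method by Wilkie's ``approximate Thue--Siegel Lemma'' (Lemma~\ref{lem:approxsiegel} and Proposition~\ref{prop:determinant}), which constructs the auxiliary polynomial via Minkowski's theorem rather than a Vandermonde determinant. The induction scheme in Section~\ref{sec:induction} is structurally parallel to the one you describe, but is carried out from the outset for families (\statement{$r$}), which is exactly the systematization you anticipate needing.
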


This counting result comes after a long series of work including
papers of
Jarn{\'{\i}}k \cite{Jarnik} and Bombieri-Pila \cite{BombieriPila} 
in the one-dimensional
setting  and Pila \cite{Pila:integerdilation} for certain surfaces. The
counting result was further developed by Pila \cite{Pila:AO} to algebraic points of
bounded height and with  a more precise substitute for $\alg{X}$.
This led to striking applications towards the Andr\'e-Oort Conjecture.

The purpose of this paper is to investigate whether one can find
similar bounds on the number of rational points that
\textit{approximate} a definable set. 

Before we come to our first result, let us introduce some notation. 
We will use $|\cdot|$ to denote the maximum-norm on $\IR^n$. 
For  $\epsilon > 0$ we set
\begin{equation*}
  \nbhd{X,\epsilon} = \{y\in\IR^n: \text{there is $x\in X$ with
    $|y-x|<\epsilon$} \}
\end{equation*}
to be the $\epsilon$ neighborhood of the subset $X\subset\IR^n$.

Let $\IQbar$ denote  the algebraic closure of $\IQ$ in $\IC$. 
The absolute Weil height $H:\IQbar\rightarrow [1,+\infty)$  extends the height defined above from $\IQ$
to $\IQbar$; we give a  precise definition and some basic facts in
 Section \ref{sec:notation}. 
 The height $H(q)$ of $q=(q_1,\ldots,q_n)\in\IQbar^n$ is
$\max\{H(q_1),\ldots,H(q_n)\}$.

Let $T\ge 1$ be a real number and  $e\ge 1$ an integer. We
disregard  algebraic numbers that are not real and set
\begin{equation*}
  \IQ^n(T,\degvar) = \left\{ q\in (\IQbar\cap\IR)^n : H(q)\le T \text{ and
  }[\IQ(q):\IQ]\le \degvar\right\}. 
\end{equation*}
This is a finite set by Northcott's Theorem, see Theorem 1.6.8 \cite{BG}.

An o-minimal structure is called \textit{polynomially bounded} if any definable
function $\IR\rightarrow\IR$ is bounded from above by a polynomial for
all sufficiently large positive arguments, 
cf. Section 4 \cite{DM:96}.

Many results in this paper are restricted to polynomially bounded
o-minimal structures for reasons that will be explained in Example
\ref{ex:nonpoly}.

\begin{theorem}
\label{thm:approx2}
Let $X\subset\IR^n$ be closed and definable in a polynomially bounded o-minimal
  structure. Let $\degvar\ge 1$ be an integer and let $\epsilon>0$.
 There exist
  $c=c(X,\degvar,\epsilon)\ge 1$ and
  $\theta=\theta(X,\degvar,\epsilon)\in (0,1]$ such that if $\lambda\ge
  \theta^{-1}$, then 
\begin{equation*}
   \#\left\{q \in \IQ^n(T,\degvar) \ssm \nbhd{\alg{X},T^{-\theta\lambda}} :  \text{there is }
x\in X\text{ with }|x-q|<T^{-\lambda} \right\}
\le cT^\epsilon
\end{equation*}
for all $T\ge 1$. 
\end{theorem}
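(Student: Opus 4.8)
The plan is to reduce the approximation statement to the Pila--Wilkie Counting Theorem \ref{thm:pilawilkie} applied to an auxiliary definable set of one higher dimension, built so that a rational point $q$ that approximates $X$ to within $T^{-\lambda}$ but stays far from $\alg{X}$ gives rise to an actual rational (or bounded-degree algebraic) point \emph{on} that auxiliary set but off its algebraic part. The natural candidate is a suitably rescaled graph of the distance function: consider $d_X(y)=\mathrm{dist}(y,X)$, which is definable since $X$ is, and look at the family of sets $X_\eta = \{(y,t) : d_X(y)\le t\}$ or, better adapted to counting, its boundary near $X$. The key point exploited here is that $X$ is \emph{closed}, so $d_X(y)=0$ precisely on $X$, and being definable in a \emph{polynomially bounded} structure, $d_X$ satisfies \L ojasiewicz-type inequalities: on a definable set there are constants so that $d_X(y)^N \gtrsim \mathrm{dist}(y,X\cup(\text{pole locus}))$, etc. Since algebraic points enter, I expect the argument to be run through Pila's extension of the Counting Theorem to $\IQ^n(T,e)$ (e.g.\ the block/cell version), not just Theorem \ref{thm:pilawilkie} verbatim, but structurally it is the same input.

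The first concrete step is to encode an approximant. Given $q\in\IQ^n(T,e)$ with some $x\in X$ at distance $<T^{-\lambda}$, we have $d_X(q)<T^{-\lambda}$. I would introduce an extra variable and consider the definable set
\begin{equation*}
  Y = \{(y,s)\in \IR^n\times\IR : s>0,\ s\cdot d_X(y) \le 1\ \text{(near $X$, suitably cut off)}\},
\end{equation*}
so that the pair $(q, \lfloor T^{\lambda}\rfloor)$ — or rather a rational point $s$ of height $\approx T$ with $s\approx T^{\lambda'}$ for an appropriate $\lambda'$ — lands in $Y$. One then applies the Counting Theorem to $Y$ with the exponent $\epsilon$ and the degree $e$: all but $\le c T^{\epsilon}$ of the relevant points must lie on $\alg{Y}$, i.e.\ on a connected positive-dimensional semialgebraic curve (or higher) inside $Y$. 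The second step, the heart of the matter, is to transfer membership in $\alg{Y}$ back to the conclusion that $q\in\nbhd{\alg{X},T^{-\theta\lambda}}$: a positive-dimensional semialgebraic set inside $Y$, once projected to the $y$-coordinates, is a semialgebraic subset of a small neighborhood of $X$; I must argue that it is forced to lie within $T^{-\theta\lambda}$ of an honest positive-dimensional semialgebraic set contained in $X$ itself, which is where $\alg{X}$ appears. This is precisely where polynomial boundedness is indispensable: a \L ojasiewicz inequality for $d_X$ with a \emph{polynomial} rate converts the bound $s\,d_X(y)\le 1$ with $s$ of size a power of $T$ into the bound $d_X(y)\le T^{-c'}$ with an \emph{effective exponent}, and an effective bound is exactly what is needed to produce the exponent $\theta$; in a non-polynomially-bounded structure (Example \ref{ex:nonpoly}) the rate can be sub-polynomial and no such $\theta$ exists.

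The main obstacle, then, is this second step — controlling the algebraic locus of the auxiliary set and relating it to $\alg{X}$ with a \emph{uniform, explicit} exponent. Concretely I would: (i) stratify/cell-decompose $X$ definably so that on each piece $d_X$ and its gradient behave tamely; (ii) on each cell establish, via the polynomial-boundedness hypothesis and the definable \L ojasiewicz inequality, an estimate of the form: if a semialgebraic arc $\gamma$ lies in $\{d_X<\delta\}$ then $\gamma$ lies in $\nbhd{\alg{X},\delta^{\theta_0}}$ for a $\theta_0>0$ depending only on the complexity of the cell decomposition; (iii) choose $\lambda'$ so that a rational point of height $\le T$ approximating $X$ to within $T^{-\lambda}$ forces $s\,d_X(y)\le 1$ for $s$ comparable to $T^{\lambda'}$, then read off $\theta$ from $\theta_0$ and the comparison constants; (iv) handle the algebraic-point case $e>1$ by running the same construction through the $\IQ^n(T,e)$-version of the Counting Theorem, noting that the extra variable $s$ may be taken rational so the degree does not grow. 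The compactness of $[0,1]^n$ (or closedness of $X$ together with the height bound confining everything to a bounded box) guarantees finitely many cells and hence a single $\theta$, $c$ depending only on $X$, $e$, $\epsilon$, as required.
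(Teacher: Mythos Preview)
Your reduction to Pila--Wilkie via an auxiliary set in $\IR^{n+1}$ breaks down at two places. First, the height bookkeeping: for a nonzero rational $s$ one always has $H(s)\ge |s|$, so ``a rational $s$ of height $\approx T$ with $s\approx T^{\lambda'}$'' is impossible once $\lambda'>1$. If $d_X(q)<T^{-\lambda}$ and $(q,s)$ lies on $\{s\,d_X(y)=1\}$ then $s>T^{\lambda}$, hence $H(q,s)>T^{\lambda}$; applying the Counting Theorem at height $T^{\lambda}$ yields a bound $c(\epsilon/\lambda)\,T^{\epsilon}$ whose constant depends on $\lambda$, which the statement forbids. (Your literal $Y=\{s>0,\,s\,d_X(y)\le 1\}$ is worse: every point lies on a vertical semialgebraic segment, so $\alg{Y}=Y$ and the conclusion is empty.) Second, your step (ii) --- that a semialgebraic arc $\gamma\subset\{d_X<\delta\}$ must lie in $\nbhd{\alg{X},\delta^{\theta_0}}$ --- is simply false: take $X$ to be the graph of $x\mapsto e^{x}$ on $[0,1]$, so $\alg{X}=\emptyset$, and let $\gamma$ be the graph of a polynomial approximating $e^{x}$ to within $\delta$. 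Then $\gamma\subset\{d_X<\delta\}$, yet there is nothing semialgebraic inside $X$ for $\gamma$ to be near. The \L ojasiewicz inequality for $d_X$ gives no leverage here; it compares $d_X$ to the distance to its zero set $X$, not to $\alg{X}$.

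The paper does not reduce to Theorem~\ref{thm:pilawilkie} at all. It re-runs the entire Pila--Wilkie machine in an approximate form: reparametrize the fibers of a family, construct auxiliary polynomials $f$ via an approximate Thue--Siegel lemma (Proposition~\ref{prop:determinant}) so that not only $f(q)=0$ but also $|f(x)|$ is small at the nearby $x\in X$, and then --- the genuinely new step --- invoke a uniform \L ojasiewicz inequality for families (Proposition~\ref{prop:lojasiewicz}) to replace $(f,y,x)$ by a nearby $(f',y',x')$, drawn from a bounded number of fibers, with $f'(x')=0$ exactly. This drops the dimension and drives the induction (Statement($r$) in Section~\ref{sec:induction}). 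Theorem~\ref{thm:approx2} is then read off from the resulting decomposition (Theorem~\ref{thm:approxqac}) by a short Liouville argument. The \L ojasiewicz input is applied \emph{inside} $X$ to the auxiliary polynomials, never to $d_X$ from the outside.
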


Roughly speaking, rational approximations to a definable set
cluster near the algebraic locus. 
Van den Dries  \cite{vdD:TarskiSeidenberg} recognized that
$\IRan$,  the  structure of
restricted real analytic functions, is o-minimal  and even
polynomially bounded using older work of
Gabrielov.


The statement of the theorem simplifies
if  $X$ does not contain a connected real semi-algebraic set of
positive dimension.

\begin{corollary}
\label{cor:noalglocus}
  Let $X\subset\IR^n$ be  closed  and definable in a polynomially
  bounded o-minimal structure such that $\alg{X}=\emptyset$. Let
  $\degvar\ge 1$ be an integer and let $\epsilon>0$. There exist $c=c(X,\degvar,\epsilon)>0$ 
and $\lambda = \lambda(X,\degvar,\epsilon)>0$ such
  that
\begin{equation*}
   \#\left\{q \in \IQ^n(T,\degvar) :  \text{there is }
x\in X\text{ with }|x-q|<T^{-\lambda} \right\}
\le cT^\epsilon
\end{equation*}
for all $T\ge 1$. 
\end{corollary}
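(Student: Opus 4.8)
The plan is to obtain Corollary \ref{cor:noalglocus} as an immediate specialization of Theorem \ref{thm:approx2}; beyond unwinding a single definition, there is nothing to do.

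First I would apply Theorem \ref{thm:approx2} to the given closed definable set $X$, the integer $\degvar$, and $\epsilon>0$. This produces constants $c=c(X,\degvar,\epsilon)\ge 1$ and $\theta=\theta(X,\degvar,\epsilon)\in(0,1]$ such that the displayed counting bound holds for every real $\lambda\ge\theta^{-1}$ and every $T\ge 1$. I would then fix once and for all $\lambda=\theta^{-1}$; since $\theta$ depends only on $X,\degvar,\epsilon$, so does $\lambda$, as required. For this choice $\theta\lambda=1$, so the neighborhood appearing in Theorem \ref{thm:approx2} is $\nbhd{\alg{X},T^{-\theta\lambda}}=\nbhd{\alg{X},T^{-1}}$.

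Next I would invoke the hypothesis $\alg{X}=\emptyset$. Directly from the definition $\nbhd{Y,\delta}=\{y\in\IR^n:\text{there is }x\in Y\text{ with }|y-x|<\delta\}$, the empty set has empty $\delta$-neighborhood for every $\delta>0$, so $\nbhd{\alg{X},T^{-1}}=\emptyset$ for all $T\ge 1$. Consequently $\IQ^n(T,\degvar)\ssm\nbhd{\alg{X},T^{-\theta\lambda}}=\IQ^n(T,\degvar)$, and the set whose cardinality Theorem \ref{thm:approx2} bounds by $cT^\epsilon$ becomes exactly
\begin{equation*}
  \left\{ q\in\IQ^n(T,\degvar) : \text{there is }x\in X\text{ with }|x-q|<T^{-\lambda} \right\},
\end{equation*}
which is the set in the corollary. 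Hence the bound $cT^\epsilon$ for all $T\ge 1$ carries over verbatim, with $c>0$ since $c\ge 1$.

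As for obstacles: there are none at this level. The corollary is purely a reformulation of the theorem in the degenerate case $\alg{X}=\emptyset$, in which the exceptional set of rational points clustering near the algebraic locus that Theorem \ref{thm:approx2} must discard is already empty. All the genuine content — the construction of $\theta$ and the dependence of $c$ on $(X,\degvar,\epsilon)$ — sits inside the proof of Theorem \ref{thm:approx2}, which is used here only as a black box.
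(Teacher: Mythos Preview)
Your proposal is correct and is exactly the intended derivation: the paper presents Corollary~\ref{cor:noalglocus} as an immediate simplification of Theorem~\ref{thm:approx2} (it does not even spell out a separate proof), and your argument---fixing $\lambda=\theta^{-1}$ and using $\nbhd{\emptyset,\cdot}=\emptyset$---is precisely how the reduction goes.
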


Huxley \cite{Huxley:Pisa94} obtained powerful bounds  for the number of rational approximations to the graph of a
 function $\IR\rightarrow\IR$ that is twice  continuously
 differentiable. Here the second derivative is not allowed to vanish.
He made further contributions \cite{Huxley:AA2000} for trice
continuously differential functions.  Huxley's result also covers
 many algebraic functions and  does not distinguish between the
algebraic and transcendental case. 
Applying his result to a graph whose algebraic locus
is empty does not seem to lead to a 
$T^\epsilon$ bound as in Corollary \ref{cor:noalglocus}.

We exhibit  examples which show that some of  the assumptions in our
results cannot be dropped. 

First, let us see why we cannot drop the
hypothesis  that $X$ is closed in Corollary \ref{cor:noalglocus}.
 
\begin{example}
\label{ex:needpb}
 We work  in the o-minimal structure $\IRan$ in which 
    \begin{equation*}
       X = \bigl\{(x,y,(e^{x}-1)(e^y-1)) : x,y\in (0,1) \bigr\} \subset\IR^3.
    \end{equation*}
is definable. It is a  $2$-dimensional cell that is not closed.
There are $3T^2/\pi^2 + o(T^2)$ rational
points $q=(x,0,0)\in [0,1]\times\IR^2$ of height at most $T$, see
 Theorem 330 \cite{HardyWright}.
 Each such point lies in the closure of $X$ in $\IR^3$. 
However, using Ax's Theorem \cite{AxSchanuel} one can show $\alg{X}=\emptyset$. 
So the real semi-algebraic curves in the boundary of a definable set
can lead to many good rational approximations.
\end{example}

If $x\in\IR^n$ and if $X$ is any non-empty subset of $\IR^n$ then we define
\begin{equation*}
  \dist{x,X} = \inf\{ |x-x'| : x'\in X\}
\end{equation*}
and 
\begin{equation*}
  \dists{x,X} = \min \{1,\dist{x,X}\}.
\end{equation*}
It is convenient to define $\dists{x,\emptyset} = 1$ for all $x\in\IR^n$.
The function $x\mapsto \dists{x,X}$ is continuous and it is
definable if $X$ is. 

Second, we construct  an example which shows that
Corollary \ref{cor:noalglocus} is false if we drop the hypothesis that
the o-minimal structure in question  is not
polynomially bounded. 



\begin{example}
\label{ex:nonpoly}
 Set
  \begin{equation*}
    X = \left\{(x,e^{-1/x}) : x\in (0,1] \right\} \cup \left\{(0,0)\right\}
  \end{equation*}
which is definable in $\IRexp$, the structure generated by
the exponential function on the reals, which was proved to be
o-minimal by Wilkie. Observe that $X$ is compact and
$\alg{X}=\emptyset$ as $x\mapsto e^{x}$ is not semi-algebraic. 
For given $\lambda>0$ there is $x_0=x_0(\lambda)\ge 1$ such that 
we have $e^{-x/2}< x^{-\lambda}$ if $x\ge x_0$.
Now let $n\ge 1$ be an integer and suppose $T\ge x_0(\lambda)$. If $T/2\le n$, then
\begin{equation*}
  \left|(1/n,0) - (1/n,e^{-n})\right| = e^{-n} \le e^{-T/2}< T^{-\lambda}.
\end{equation*}
Thus $(1/n,e^{-n})\in X$  approximates
the rational point $(1/n,0)$.
Considering all $n$ with $T/2\le n\le T$ we
find 
\begin{equation*}
\#\left\{q\in \IQ^2 : H(q)\le T \text{ and }\dists{q,X}<T^{-\lambda} \right\}\ge
\frac{T}{2} - 1
\end{equation*}
for all sufficiently large $T$. 
\end{example}

The multiplicative constant in Pila and Wilkie's Theorem is uniform
over
families of definable sets. Somewhat surprisingly, the constant 
$c$ in Theorem \ref{thm:approx2} is not uniform over a definable
family, as we now demonstrate. 

\begin{example}
\label{ex:family}
    We take 
  \begin{equation*}
    Z = \left\{\left(y,x,e^{xy}-1\right) : x,y\in [0,1] \right\} \subset\IR \times\IR^2
  \end{equation*}
and we consider $Z$ as a
definable family parametrized by $y$  with fibers $Z_y$. It is compact and
 definable in $\IRan$.

Observe that $\alg{(Z_y)}=\emptyset$ if $y \in (0,1]$ and 
$\alg{(Z_0)} = Z_0$. In other words, the family $Z$ has transcendental
  fibers away from $0$ which ``degenerate'' to a real semi-algebraic curve
  above $y=0$. This will affect approximation properties of the
transcendental  fibers. 

Let $\lambda >0$,
let $y\in [0,1]$, and suppose $T\ge 1$. For  small
  $y$ there are many ``obvious'' rational points close  to $X_y$ of
  bounded height. 
Indeed, say  $\eta\in\IQ\cap[0,1]$ with $H(\eta)\le T$ then
\begin{equation*}
   |(\eta,0) - (\eta,e^{\eta y}-1)| = e^{\eta y}-1\le 2\eta y\le
  2y
\end{equation*}
as $e^{t}-1\le 2t$ for all $t\in [0,1]$. 
So if $y<  T^{-\lambda}/2$, then as in Example \ref{ex:needpb} we find
\begin{equation*}
  \#\{ q\in \IQ^2 : H(q)\le T \text{ and there is $x\in Z_y$ 
with $|x-q|<T^{-\lambda}$} \} \ge \frac{3}{\pi^2}T^2 + o(T^2)
\end{equation*}
where the constant in $o(\cdot)$ is independent of $y$. 
In particular,   there {cannot} exist  constants $c>0$ and $\lambda >
0$ such that
\begin{equation*}
\#\{ q\in \IQ^2 : H(q)\le T \text{ and }\dists{q,Z_y}<T^{-\lambda}
\}\le cT
\end{equation*}
holds for all $T\ge 1$ and all $y\in[0,1]$ with $\alg{(Z_y)}=\emptyset$. 
  \end{example}

\begin{example}
\label{ex:nonboundedbase}
Here is a variation of the last example. We set
\begin{equation*}
  Z = \left\{\left(y,x,y^{-1}x^{\sqrt 2}\right) : y  \in [1,+\infty) \text{ and }
    x\in [0,1]\right\} \subset\IR \times\IR^2.
\end{equation*}
Then $Z$ is definable in the  structure generated by
$\IRan$ and taking real powers, cf.
the paragraph before Section 3 \cite{DM:96}  and
Miller's paper \cite{Miller:Expansions} for the fact that this
structure is o-minimal and polynomially bounded. 
  This time $Z$ is closed and $\alg{(Z_y)}=\emptyset$ for all $y$. 

Say $\lambda >0$ is arbitrary.
Let $x\in \IQ\cap [0,1]$ with $H(x)\le T$ and $y\ge 1$, then 
\begin{equation*}
\left|(x,0)-\left(x,y^{-1}x^{\sqrt 2}\right)\right|=y^{-1}x^{\sqrt 2}\le y^{-1}.  
\end{equation*}
 If $y > T^\lambda$, then as in Example \ref{ex:family}
\begin{equation*}
\#\left\{ q\in \IQ^2: H(q)\le T \text{ and } \dists{q,Z_y}<T^{-\lambda}
\right\}
\ge \frac{3}{\pi^2} T^2 + o(T^2). 
\end{equation*}
So the constant $c$ in Corollary \ref{cor:noalglocus} is not uniformly
bounded for families of definable sets. 

In this example, the  transcendenal fibers $Z_y$ degenerate to
the line segment $[0,1]\times\{0\}$ as $y\rightarrow +\infty$. 
\end{example}

In order to generalize Corollary
\ref{cor:noalglocus} to a definable family.
 we must make sure that the family contains
no fibers with a non-trivial algebraic locus and that the fibers do
not degenerate into something algebraic at infinity. 
We make these assumptions precise in the next theorem. 
Let $m\ge 0$ be an integer. 


\begin{theorem}
  \label{thm:familybasicversion}
  Let $Z\subset \IR^m\times \IR^n$ be  closed and  definable in a polynomially
  bounded o-minimal structure such that the projection of $Z$ to $\IR^m$ is bounded
  and such that  $\alg{(Z_y)}=\emptyset$ for all $y\in\IR^m$. 
Let
  $\degvar\ge 1$ be an integer and let $\epsilon >0$. There
  exist $c=c(Z,\degvar,\epsilon)\ge 1$ 
and $\lambda = \lambda(Z,\degvar,\epsilon)>0$ such that
\begin{equation*}
   \#\left\{q \in \IQ^n(T,\degvar) :  \text{there is }
x\in X_y\text{ with }|x-q|<T^{-\lambda} \right\}
\le cT^\epsilon
\end{equation*}
for all $T\ge 1$ and all $y\in\IR^m$. 
\end{theorem}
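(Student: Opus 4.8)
The plan is to prove Theorem \ref{thm:familybasicversion} as the ``uniform over definable families'' companion of Corollary \ref{cor:noalglocus}, by re-running the argument behind Theorem \ref{thm:approx2} with the parameter $y$ carried through from the start. One cannot simply reduce to the non-uniform statements: replacing $y$ by a rational $\tilde y$ with $|\tilde y-y|<T^{-\lambda}$ costs a height blow-up of order $T^{\lambda m}$, so the point $(\tilde y,q)\in\IQ^{m+n}$ it produces approximates $Z\subset\IR^{m+n}$ only to a quality that, relative to its height, is too weak to be fed into Theorem \ref{thm:approx2}; and projecting $y$ away and working with the image $\pi(Z)\subset\IR^n$ of $Z$ under the projection to the last $n$ coordinates (which is closed, since the projection of $Z$ to $\IR^m$ is bounded and $Z$ is closed) fails as well, because $\alg{\pi(Z)}$ is typically non-empty even when every $\alg{(Z_y)}$ is empty, so Theorem \ref{thm:approx2} loses all the approximants that lie near $\alg{\pi(Z)}$.

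So I would argue uniformly in $y$. Let $\bar B\subset\IR^m$ be the closure of the projection of $Z$ to $\IR^m$; it is compact and $Z\subset\bar B\times\IR^n$. I apply the parametrization underlying Theorem \ref{thm:approx2} to the whole family at once: using a cell decomposition and \qac{}-parametrization of $Z$ over its parameter space --- available, uniformly in $y$, in any o-minimal structure --- one covers $Z_y$, for every $y$, by finitely many fibers of definable families of \qacs{}, with the number of cells and all the relevant complexity data (number of coordinate functions, the exponents coming from polynomial boundedness) bounded independently of $y$. On each cell family one then estimates the number of rational approximants fiberwise by the Bombieri--Pila / Pila--Wilkie style determinant argument used for Theorem \ref{thm:approx2} \cite{BombieriPila, PilaWilkie}: after subdividing finely and replacing the coordinate functions by Taylor polynomials, rational points of height $\le T$ in a $T^{-\lambda}$-tube around a cell fiber are forced onto an algebraic hypersurface of bounded degree $D=D(\epsilon)$, and one concludes there are $\le cT^\epsilon$ of them provided the cell fiber is not of that type.

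The heart of the proof, and the step I expect to be the main obstacle, is to make this last non-degeneracy hold with $\lambda$, $D$ and the resulting constant $c$ \emph{independent of} $y$; this is exactly where the two hypotheses enter. By compactness of $\bar B$ it suffices to produce uniform constants on a neighbourhood of each $y_0\in\bar B$ and then pass to a finite subcover, so I would argue by contradiction locally: a failure yields $y_i\to y_0\in\bar B$, $T_i\to\infty$ and $\lambda_i\to\infty$ with too many approximants to $Z_{y_i}$. Boundedness of the projection of $Z$ to $\IR^m$ is precisely what makes $\bar B$ compact and so forces the limit $y_0$ to exist inside the parameter space, ruling out a fiber ``running off in the base direction'' as in Example \ref{ex:nonboundedbase}; and the hypothesis $\alg{(Z_y)}=\emptyset$ for \emph{every} $y\in\IR^m$, applied in particular at the limit parameter $y_0$, prevents the relevant cell fibers from degenerating to a positive-dimensional connected semi-algebraic set as $y\to y_0$, which is the degeneration responsible for the failure of uniformity in Example \ref{ex:family}. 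That $Z$ is closed is used here too, through $\limsup_i Z_{y_i}\subset Z_{y_0}$ for $y_i\to y_0$, which is what lets one transfer a hypothetical bad limit back to the genuine fiber $Z_{y_0}$ and contradict $\alg{(Z_{y_0})}=\emptyset$ via Corollary \ref{cor:noalglocus}.

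Two technical points remain, neither of which I expect to be serious once the above is in place. The fibers $Z_y$ may be unbounded in $\IR^n$, but only the part of norm $\le T+1$ ever matters, and in a polynomially bounded structure that part is covered by boundedly many pieces which, after an integral translation and a rescaling that changes heights by at most a fixed power of $T$ (absorbable into $\epsilon$ and $\lambda$), lie in a fixed box. And nearby fibers are only within $o(1)$ of the limit fiber, not within a power of $T$, so in the contradiction argument the comparison must be carried out at the level of the parametrizing maps rather than of the sets themselves, with the final contradiction coming from Corollary \ref{cor:noalglocus} applied to a single fixed limit cell with empty algebraic locus. Polynomial boundedness is moreover used throughout to obtain the stated $cT^\epsilon$ bound rather than a weaker one, as the non-polynomially-bounded Example \ref{ex:nonpoly} shows to be necessary.
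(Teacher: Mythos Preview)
Your proposal has a genuine gap, and it lies exactly where you yourself flag the difficulty. You correctly observe that one cannot reduce to the non-family Theorems \ref{thm:approx2} or \ref{thm:approxqac}, and that the crux is getting the constants uniform in $y$. But your proposed mechanism for uniformity --- a compactness argument on $\bar B$ together with a contradiction via $y_i\to y_0$ and Corollary \ref{cor:noalglocus} applied to $Z_{y_0}$ --- does not close. As you already note, the Hausdorff distance between $Z_{y_i}$ and $Z_{y_0}$ is only $o(1)$, not $T_i^{-\text{power}}$, so good approximants to $Z_{y_i}$ need not be good approximants to $Z_{y_0}$ at any scale that Corollary \ref{cor:noalglocus} can see. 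Saying one should ``work at the level of parametrizing maps'' does not help: the reparametrizing maps $\phi_y$ also vary only continuously in $y$, and the determinant method requires $C^{b+1}$-closeness at scale a power of $T$ to transfer the auxiliary polynomial from one fiber to another.

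More fundamentally, you are missing the step that actually fails to be uniform. After the determinant argument (which \emph{is} uniform, via Pila--Wilkie reparametrization for families), one knows only that $|f_j(x)|$ is small for $x\in Z_y$. To continue the induction one must pass from ``$|f_j(x)|$ small'' to ``$x$ is close to $Z_y\cap\zeroset{f_j}$'', and this is a \L ojasiewicz inequality. Example \ref{ex:lojafamilies} shows that the \L ojasiewicz constants are \emph{not} uniform over a definable family, and no amount of compactness in $y$ repairs this. The paper resolves it not by a limiting argument but by the \L ojasiewicz-in-families Proposition \ref{prop:lojasiewicz}: at the cost of replacing the single parameter $(f,y)$ by a bounded number of nearby parameters $(f_i,y_i)$, one gets a uniform exponent $\delta$. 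This is the main new technical ingredient, and your proposal does not mention it.

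The paper's proof of Theorem \ref{thm:familybasicversion} is then essentially one line: apply Theorem \ref{thm:approxfamilyclosed} (the family version of Theorem \ref{thm:approxqac}, whose proof has the uniform \L ojasiewicz inequality built into the induction of \statement{$r$}), and finish exactly as in the proof of Theorem \ref{thm:approx2}. Each $(D_{j_p})_{(z_p,y_p)}$ produced there lies in $Z_{y_p}$, and the hypothesis $\alg{(Z_{y_p})}=\emptyset$ forces it to be a point; Liouville then bounds the number of $q$ per point by one. Your instinct that the hypothesis must be used at \emph{every} $y$, including nearby ones, is correct --- but it is used because the machinery lands you in a nearby fiber $Z_{y_p}$ with $|y-y_p|<T^{-\theta\lambda}$, not via a sequential limit.
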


In view of Example \ref{ex:nonboundedbase} we cannot drop the
hypothesis that the projection of $Z$ to $\IR^m$ is bounded in this
last theorem. 

Can one strengthen Theorem \ref{thm:approx2} by
 replacing  $\nbhd{\alg{X},T^{-\theta\lambda}}$ by
$\alg{X}$? The answer is no, as the
following example shows. 

\begin{example}
  Let 
  \begin{equation*}
    \xi = \sum_{n=1}^\infty 10^{-n!}
  \end{equation*}
be Liouville's constant and set
\begin{equation*}
  X = [0,1]\times \{\xi\}.
\end{equation*}
Then $X$ is semi-algebraic, hence definable in any o-minimal structure
and  $X=\alg{X}$. We claim that there
cannot exist constants $\epsilon \in (0,2),c >0,$ and $\lambda > 0$
such that
\begin{equation}
\label{eq:algXapprox}
\#  \left\{ q\in \IQ^2 \ssm \alg{X} : H(q)\le T
\text{ and }\dists{q,X}<T^{-\lambda} \right\} \le cT^\epsilon
\end{equation}
for all $T\ge 2$. 

Indeed, say $\xi_m = \sum_{n=1}^m
10^{-n!}$ for $m\ge 1$. Then $\xi_m\not=\xi$ and $|\xi_m-\xi| \le 2 \cdot 10^{-(m+1)!}$.
Moveover, each $\xi_m$ is rational with height $H(\xi_m) = T$ where $T=10^{m!}$.
For all $x\in\IR$ we have
\begin{equation*}
 |(x,\xi_m)-(x,\xi)| \le 2\cdot 10^{-(m+1)!} = 2 T^{-(m+1)}
< T^{-m}
\end{equation*}
as $T\ge 2$. 
Say $m\ge \lambda$, then $|(x,\xi_m)-(x,\xi)|<T^{-\lambda}$.
As there are $3T^2/\pi^2 + o(T^2)$ rational $x\in [0,1]$ 
with $H(x)\le T$, the  bound  (\ref{eq:algXapprox}) fails for $m$
sufficiently large.  
\end{example}

We now give a variant of Theorem \ref{thm:approx2} which emphasizes points on $X$ that admit a good
rational approximation. We
will deduce all results above  using this point of view.

\begin{theorem}
\label{thm:approx}
Let $X\subset\IR^n$ be closed and definable in a polynomially bounded o-minimal
  structure. Let $\degvar\ge 1$ be an integer and  let $\epsilon >0$. 
There exist  $c=c(X,\degvar,\epsilon)>0$ and
$\theta=\theta(X,\degvar,\epsilon)\in (0,1]$ with
  the following property. 
If $\lambda\ge \theta^{-1}$ and   $T\ge 1$ there exist an integer $N\ge 0$ with $N\le
cT^\epsilon$  and  $x_1,\ldots,x_N\in X$ such that 
\begin{equation}
\label{eq:counting}
   \left\{
   x\in X \ssm \nbhd{\alg{X},T^{-\theta\lambda}}: \text{there is } q \in
   \IQ^n(T,\degvar) \text{ with }  |x-q| <
  T^{-\lambda}  \right\} 
\subset \bigcup_{i=1}^N \nbhd{\{x_i\},T^{-\theta\lambda}}. 
\end{equation}
\end{theorem}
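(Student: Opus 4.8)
The plan is to reduce the statement to the Pila--Wilkie Counting Theorem (Theorem \ref{thm:pilawilkie}) applied to a cleverly chosen auxiliary definable set. First I would introduce the definable function $\delta(x) = \dists{x,\alg{X}}$ on $X$, which is continuous and definable, and the definable ``approximation error'' set
\begin{equation*}
W = \bigl\{ (x,q,u) \in X \times \IR^n \times \IR_{>0} : |x-q| < u \bigr\},
\end{equation*}
or rather a version of it in which $q$ ranges over a suitable definable set. The key point is that the condition ``$x$ is well approximated by some $q\in\IQ^n(T,\degvar)$ but $x\notin\nbhd{\alg{X},T^{-\theta\lambda}}$'' should be captured by membership of a point in a definable set so that one can feed it into Pila--Wilkie. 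Since $\IQ^n(T,\degvar)$ is an infinite union of points (not definable), the standard trick is to replace ``there is $q\in\IQ^n(T,\degvar)$ with $|x-q|<T^{-\lambda}$'' by ``$x$ lies within $T^{-\lambda}$ of $\IR^n$-coordinates that are $\IR$-algebraic of bounded degree,'' and to package the height bound $T$ and the exponents $\lambda$, $\theta$ into extra real parameters that one sends to $\infty$ or $0$. Concretely I expect to build a definable family over a base encoding $(\log T)^{-1}$ (or $T$ directly, using polynomial boundedness to control growth) and to apply the Pila--Wilkie theorem for families, extracting from each fiber a bounded number of ``blocks'' which, after re-scaling, yield the centers $x_1,\ldots,x_N$ and the covering by balls $\nbhd{\{x_i\},T^{-\theta\lambda}}$.

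In more detail, the steps would be: (1) Set up the definable set $Y \subset X \times \IR^n \times (0,1)^2$ consisting of triples $(x,q,(s,t))$ with $|x-q| < s$, $\delta(x) \ge t$, together with the archimedean constraints that make $q$ behave like an algebraic point of degree $\le \degvar$ and height $\le s^{-1/\lambda}$ — here one uses the standard device of working with the tuple of conjugates and their symmetric functions, which is semi-algebraic of bounded complexity, so that the whole thing stays definable in the given o-minimal structure. (2) Apply the Counting Theorem (in its block/uniform form, which follows from Theorem \ref{thm:pilawilkie} by a standard compactness argument over the parameters) to the fiber of $Y$ over a fixed choice of parameters corresponding to a given $T$: one gets that all the relevant points lie on $cT^{\epsilon}$ connected semi-algebraic subsets of positive dimension, plus $cT^{\epsilon}$ individual points. (3) Analyze the positive-dimensional pieces: a connected semi-algebraic curve (of positive dimension) through such a point, projected to $X$, would have to lie in $\alg{X}$ up to a controlled error; because the structure is polynomially bounded, a semi-algebraic set that stays within $T^{-\theta\lambda}$ of $X$ for a range of $T$ must actually be contained in $\alg{X}$ for $\theta$ small enough, contradicting $\delta(x) \ge T^{-\theta\lambda}$. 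This is where polynomial boundedness is essential and where one chooses $\theta = \theta(X,\degvar,\epsilon)$. So only the $cT^{\epsilon}$ individual points survive; taking their $X$-coordinates gives $x_1,\ldots,x_N$, and the ``width'' $T^{-\theta\lambda}$ of the covering balls comes from the uniformity in the family version: nearby parameters give nearby points, and the $\nbhd{\{x_i\},T^{-\theta\lambda}}$ absorb the slack.

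The main obstacle I expect is step (3): controlling the positive-dimensional semi-algebraic blocks produced by Pila--Wilkie and showing they cannot contribute, which requires a quantitative link between ``a semi-algebraic set is $T^{-\theta\lambda}$-close to $X$'' and ``it is contained in $\alg{X}$.'' This needs a \L{}ojasiewicz-type inequality for the definable function $\dists{\cdot,X}$ — available in polynomially bounded o-minimal structures — together with a bound on the degree/complexity of the blocks that is uniform in $T$ (which the block form of the Counting Theorem supplies). The secondary technical nuisance is the bookkeeping to keep the auxiliary set definable while encoding algebraic points of degree $\le \degvar$ and height $\le T$; this is routine via symmetric functions and the fact that ``height $\le T$'' is a semi-algebraic condition on the coefficients of the minimal polynomial, but it must be done carefully so that the parameter $T$ enters only through a single definable coordinate to which the family version of Pila--Wilkie can be applied.
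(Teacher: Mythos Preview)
Your proposal has a genuine gap, and it lies precisely at the point you flag as the ``main obstacle.'' You want to reduce the theorem to the Pila--Wilkie Counting Theorem applied to an auxiliary definable set $Y$ encoding the pair $(x,q)$ with $|x-q|<s$. But think about what Pila--Wilkie would actually give you. If you project $Y$ to the $q$-coordinates (which is where the algebraic points live), then for any fixed $s>0$ the projection is an \emph{open} subset of $\IR^n$, hence its algebraic locus is all of it and the Counting Theorem is vacuous. If instead you keep $(x,q,s,t)$ together, then for $(x,q,s,t)$ to be an algebraic point you need $x\in X$ to be algebraic, which is an entirely different (and generally empty) condition. Either way, the blocks produced by Pila--Wilkie are semi-algebraic subsets of the auxiliary set $Y$, not of $X$; there is no mechanism forcing them to project into, or even near, $\alg{X}$. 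Your sentence ``a semi-algebraic set that stays within $T^{-\theta\lambda}$ of $X$ \dots\ must actually be contained in $\alg{X}$'' is simply false: a tangent line to the graph of $e^x$ on $[0,1]$ is semi-algebraic, as close to $X$ as you like near the point of tangency, and not contained in $X$ at all.

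The paper's route is different in kind: Theorem~\ref{thm:approx} is derived as an immediate corollary of Theorem~\ref{thm:approxqac}, and the real content is upstream. One does \emph{not} invoke Pila--Wilkie as a black box; instead one re-runs the Pila--Wilkie machinery (reparametrization plus construction of an auxiliary polynomial $f$) and exploits the conclusion $|f(x)|\le c|x-q|$, not merely $f(q)=0$. The point is that $x$ itself lies on $X$ and has $|f(x)|$ small, so a \L ojasiewicz inequality forces $x$ to be close to the zero locus $\zeroset{f}\cap Z_y$, which is genuinely a semi-algebraic subset \emph{of} $X$ and hence contributes to $\alg{X}$ when positive-dimensional. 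The new technical ingredient, and the reason polynomial boundedness enters, is that the ordinary \L ojasiewicz inequality is not uniform over the family of polynomials $f$ that arise (cf.\ Example~\ref{ex:lojafamilies}); the paper develops a uniform substitute, Proposition~\ref{prop:lojasiewicz}, which makes the induction on fiber dimension in Section~\ref{sec:induction} go through. Your proposal recognises that a \L ojasiewicz-type input is needed but places it in the wrong step: it must act on the auxiliary polynomial evaluated at $x\in X$, not on a block coming out of a Pila--Wilkie application to a thickened set.
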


Theorems \ref{thm:approx} and \ref{thm:approx2} are both special cases
of the  next  result. As in Pila and Wilkie's Theorem 1.10
\cite{PilaWilkie} we can replace $\alg{X}$, which need not be
definable, by a subset which is for fixed $T$. Our formulation of the result below
is inspired by Pila's concept of blocks, cf. Theorem 3.6 \cite{Pila:AO}. 
We refer to Section \ref{sec:qac} where some basic definitions involving
real algebraic sets are recalled.

\begin{theorem}
\label{thm:approxqac}  
Let $X\subset\IR^n$ be closed and definable in a polynomially bounded o-minimal
  structure. Let $e\ge 1$ be an integer and let $\epsilon>0$. There exist
  $c=c(X,\degvar,\epsilon)\ge 1, \theta=\theta(X,\degvar,\epsilon)\in (0,1]$, integers
  $l_1,\ldots,l_t\ge 0$ 
and definable sets $D_j \subset \IR^{l_j}\times\IR^n$ for all
$j\in \{1,\ldots,t\}$
with the following properties:
 \begin{enumerate}
 \item [(i)] 
Say $D=D_j$ for some $j\in \{1,\ldots,t\}$ and $z\in
   \IR^{l_j}$. Then  $D_{z}\subset X$ and if $D_z\not=\emptyset$,
then $D_z$ is
 a connected and open subset of the non-singular locus of a real algebraic
set of dimension $\dim D_z$. 
\item[(ii)]
If $\lambda\ge\theta^{-1}$ and   $T\ge 1$ there exist an integer $N\ge 1$ with $N\le cT^\epsilon$
and $(j_i,z_i) \in \{1,\ldots,t\}\times\IR^{l_{j_i}}$ for $i\in
\{1,\ldots,N\}$
such that if
\begin{equation}
\label{eq:approximationC}
x\in X \text{ and } q\in \IQ^n(T,\degvar) \text{ with } 
|x-q|<c^{-1} T^{-\lambda} 
\end{equation}
then $\dists{x,(D_{j_i})_{z_i}} < T^{-\theta\lambda}$
for some $i\in \{1,\ldots,N\}$. 
 \end{enumerate}
\end{theorem}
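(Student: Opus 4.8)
The plan is to reduce the approximation statement to the Pila–Wilkie counting theorem by a parametrization/lifting trick. The key idea is that a point $x\in X$ which is $T^{-\lambda}$-close to some $q\in\IQ^n(T,\degvar)$ carries Diophantine information, and one wants to package this so that the rational point $q$, rather than $x$, lives on a definable set. To do this I would first set up an auxiliary definable set in a higher-dimensional ambient space: roughly, consider pairs $(x,u)$ where $x\in X$ and $u$ records, in a suitably rescaled way, the vector $x-q$. Concretely, one works with a definable family whose fibers are of the form $\{(x - v) : x\in X,\ |v| \text{ small}\}$, i.e. small translates of $X$; a rational point in such a fiber corresponds to $q$ approximating $x\in X$. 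The polynomial boundedness hypothesis enters exactly here, as in Example \ref{ex:nonpoly}: it guarantees that a definable function cannot decay faster than any polynomial, so that ``very good'' rational approximations to $X$ of height $T$ really must lie within a polynomial-in-$T$ neighborhood, and cannot escape to scales invisible to the counting machinery.

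Next I would apply a cell decomposition to $X$ (and to the auxiliary family) to reduce to the case where $X$ is itself a cell, in fact a connected open subset of the non-singular locus of a real algebraic set — this is what produces the sets $D_j\subset\IR^{l_j}\times\IR^n$ in conclusion (i). The parametrizing spaces $\IR^{l_j}$ arise from the parameters of the cell decomposition together with the scaling parameter $\lambda$ (or rather $T^{-\lambda}$) treated as a definable variable. For the counting step in (ii), I would invoke Theorem \ref{thm:pilawilkie} (Pila–Wilkie) applied to the appropriate definable set: the rational points $q$ of height $\le T$ that approximate $X$ well, after the reduction, correspond to rational (or bounded-degree algebraic, via the Pila block/$\IQbar$ refinement of counting) points lying on a definable set, and Pila–Wilkie gives $\le cT^\epsilon$ of them outside the algebraic locus. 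The algebraic locus of the auxiliary set, pulled back, is what gives the $D_z$'s, and the $N\le cT^\epsilon$ points $(j_i,z_i)$ index the finitely many algebraic pieces that capture all the well-approximated $x$. The exponent $\theta$ and the passage from $|x-q|<c^{-1}T^{-\lambda}$ to $\dists{x,(D_{j_i})_{z_i}}<T^{-\theta\lambda}$ come from quantitative control (e.g. Łojasiewicz-type or Taylor-expansion estimates, again using polynomial boundedness) on how close $x$ must be to an algebraic piece once $q$ is a very good approximation.

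For the extension to bounded-degree algebraic points $q\in\IQ^n(T,\degvar)$, rather than merely rational points, I would follow the standard device of replacing $q$ by its tuple of conjugates and working with the $\degvar$-th symmetric-power realization, so that an algebraic point of degree $\le\degvar$ becomes a rational point of comparable height in a larger definable set; the counting of such points is then again governed by Pila–Wilkie (as in Pila's treatment for the André–Oort application). One must check that this does not destroy the o-minimality or polynomial boundedness of the structure, which it does not, and that heights transform in a controlled way, which is the content of the basic height estimates recalled in Section \ref{sec:notation}.

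The main obstacle I expect is the interaction between the scaling parameter and the cell decomposition: one needs the decomposition of the auxiliary family to be uniform as the scaling parameter $T^{-\lambda}\to 0$, so that the number of cells, and hence $t$ and the degrees/dimensions of the $D_j$, stay bounded independently of $T$ and $\lambda$. Making the geometry uniform in this degenerating limit — essentially a curve-selection or a ``definable compactness of the space of cells'' argument — together with extracting the correct exponent $\theta$ from the resulting Łojasiewicz inequalities, is the technical heart of the proof. The closedness of $X$ is used to ensure that the limiting behavior of approximants stays inside $X$ (cf. Example \ref{ex:needpb}, where dropping closedness lets boundary semi-algebraic curves contribute spuriously many approximations).
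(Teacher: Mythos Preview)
Your proposal has a genuine gap: the reduction to Pila--Wilkie as a black box does not go through. The auxiliary families you describe --- translates $\{x-v:x\in X,\ |v|\text{ small}\}$, or equivalently the $\epsilon$-neighborhoods $\nbhd{X,\epsilon}$ viewed as a family in $\epsilon$ --- have fibers that are \emph{open} subsets of $\IR^n$ whenever $\epsilon>0$. Their algebraic locus is therefore the entire fiber, and Pila--Wilkie yields no information. You cannot repair this by letting $\epsilon\to 0$ and invoking uniformity of Pila--Wilkie over a family: the algebraic locus of the fiber does not converge in any useful sense to $\alg{X}$, and in fact the problem of controlling what happens in this limit is exactly Theorem \ref{thm:approxqac} itself. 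Nor can you put the rational point $q$ on a definable graph such as $\{(q,\dist{q,X})\}$, since $\dist{q,X}$ is typically irrational and so $(q,\dist{q,X})$ is not an algebraic point.

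The paper does not reduce to Pila--Wilkie; it re-runs the Pila--Wilkie \emph{machine} with two substantial new ingredients. First, the auxiliary polynomial is constructed not by the Vandermonde determinant method but via an ``approximate Thue--Siegel Lemma'' (Lemma \ref{lem:approxsiegel}, based on Minkowski), which in Proposition \ref{prop:determinant} produces polynomials $f_j$ that simultaneously vanish at $q$ \emph{and} are small at the approximated point $x$; this handles algebraic points of bounded degree directly, without your symmetric-power device. Second --- and this is the heart of the matter --- one must pass from ``$|f_j(x)|$ is small'' to ``$x$ is close to a fiber of the zero locus of $f_j$'', uniformly as $f_j$ ranges over a definable family of polynomials. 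You correctly flag this \L ojasiewicz-in-families problem as the main obstacle, but you do not propose a mechanism; Example \ref{ex:lojafamilies} shows the na\"ive \L ojasiewicz constants genuinely blow up. The paper's solution is Proposition \ref{prop:lojasiewicz}, proved via a ``flexing'' lemma (which uses polynomial boundedness to trade growth of an auxiliary function $h$ against proximity to the frontier of a cell) and a ``straightening'' lemma (using Generic Trivialization to replace a nearby fiber by one of boundedly many reference fibers). The induction in Section \ref{sec:induction} (\statement{$r$}) then follows the Pila--Wilkie scheme on fiber dimension, with Lemma \ref{lem:countapprox} playing the role of the parametrization-plus-determinant step and Proposition \ref{prop:lojasiewicz} replacing the trivial passage from $f_j(q)=0$ to $q\in\zeroset{f_j}$. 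Theorem \ref{thm:approxqac} is then just Theorem \ref{thm:approxfamilyclosed} with $m=0$.

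Your remark on the role of polynomial boundedness is also slightly off: it is not that approximations ``must lie within a polynomial-in-$T$ neighborhood'' (they do so by hypothesis), but that the \L ojasiewicz exponent $\delta$ is finite only in polynomially bounded structures, which is what makes $T^{-\theta\lambda}$ a genuine power of $T$ rather than something subpolynomial.
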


In Theorem \ref{thm:approxfamilyclosed} below we will state a result for
definable families which, in view of Example \ref{ex:family}, takes  some
additional care to formulate. 

Our argument follows the framework laid out in the proof of Pila and Wilkie of
their counting theorem \cite{PilaWilkie}. 
We use their basic induction scheme, so it is natural to prove
the theorem directly for families of definable sets. 
Moreover, we use their version 
 of the Gromov-Yomdin Reparametrization Theorem in o-minimal structures. 
In order to treat algebraic points that merely approximate a
definable set, we require a suitable
\L ojasiewicz Inequality. However,
even a basic incarnation of this inequality is not
uniform over a definable family, cf. Example
\ref{ex:lojafamilies} below.
This lack of uniformity is ultimately reflected
in Examples \ref{ex:family} and  \ref{ex:nonboundedbase}. 
However, to complete the  induction step we need uniform control over
various quantities attached to fibers of a definable family.  
We resolve this technical difficulty   by introducing a
 uniform substitute for the \L ojasiewicz Inequality, cf.
Proposition \ref{prop:lojasiewicz}. This inequality is the main new ingredient in
this paper.  
 Its proof  requires intricate
 results on o-minimal structures such as the Generic Trivialization
 Theorem. 
Another difference to the original work of Pila-Wilkie, as well as to
 earlier work of Bombieri-Pila \cite{BombieriPila}, is our
construction of the auxiliary function. 
Instead of a Vandermonde Determinant we use an ``approximate
Thue-Siegel Lemma'' to construct the auxiliary function, an idea due
to  Wilkie \cite{W:rationalpoints}. It has the
advantage that we can deal directly with algebraic points of bounded
degree. 

Rational approximations on submanifolds of $\IR^n$ are studied in 
metric diophantine approximation. We mention just a few results and
connections to our work
here. Mahler's influential
problem asked to show that for all $\epsilon > 0$ and all $x\in\IR$ outside a Lebesgue zero
  set,
\begin{equation*}
  \left\{ q \in\IZ :
q\ge 1 \text{ and there exist }p_1,\ldots,p_n\in\IZ \text{ with }
\left| x^i -\frac{p_i}{q}\right|<q^{-1-1/n-\epsilon} \text{ for }1\le i\le n\right\}
\end{equation*}
is finite. Here $(p_1/q,\ldots,p_n/q)$
approximates a point on the curve $\{ (x,x^2,\ldots,x^n)
:x\in\IR\}$ with error $q^{-\lambda}$ where $\lambda =
1+1/n+\epsilon$ is arbitrarily close to the critical value $1+1/n$. 
Sprindzhuk solved Mahler's problem. The
 more general conjecture of 
Baker-Sprindzhuk was proved by Kleinbock and
Margulis. 



In  recent work, Beresnevich, Vaughan, Velani, and Zorin
\cite{BVVZ} obtained upper bounds for the number of sufficiently good rational
approximations on certain submanifolds  in $\IR^n$. 
As in other work mentioned in this direction, 
there is a strong emphasis on the quality of the exponent $\lambda$.

Our  method is of a different nature, it 
yields  little control on this exponent. 
Indeed, $\lambda$ 
produced by Theorem \ref{thm:approx2} comes out of compacity
statements in o-minimality and seems difficult to pin down.
The  trade-off is that  our bounds for the number of rational
approximations grows as an arbitrarily small power of the height. This
has applications, one of which we present here. 

We apply  our results to the 
question of  how small a non-vanishing sum of $n+1\ge 2$ roots of unity can
be. 
This problem appears in connection with eigenvalues of circulant
matrices
in work of Graham and Sloane \cite{GrahamSloane:AntiHad}. 
For an integer $N\ge 1$, Myerson \cite{Myerson:sumofrootsof1} defined $f(n+1,N)$ to be the least positive value of 
\begin{equation*}
  \left|1+\zeta_1+\cdots + \zeta_n\right| \quad\text{where}\quad
  \zeta_1^N = \cdots=\zeta_n^N=1.
\end{equation*}
He proved asymptotic estimates if $n\in
\{1,2,3\}$
for $N$ in certain congruence classes and
 $N\rightarrow +\infty$. 
Here we are interested in lower bounds for $f(n+1,N)$. 
Myerson's result \loccit{} implies 
$f(n+1,N)\ge c N^{-1}$ for some absolute constant $c>0$ in the cases $n=1$ and
$n=2$
and $f(4,N)\ge c N^{-2}$.
A lower bounds that decreases exponentially in $N$
holds by 
Konyagin and Lev's Theorem 1 
\cite{KonyaginLev}. Using Liouville's Theorem from number
theory one finds $f(n+1,N)>(n+1)^{-N}$ in general.
 Upper bounds for 
 $f(n+1,N)$ are discussed in \cite{KonyaginLev,Myerson:sumofrootsof1}
and they decrease polynomially in $N$ for fixed $n$ and large $N$. 
However, it seems to be unknown if a polynomial lower bound
holds if $n\ge 4$. 
The author finds it reasonable to expect the following folklore
conjecture. It would follow from a positive answer to the question
Myerson \cite{Myerson:sumofrootsof1} asks at the end of his paper. 
\begin{conjecture*}
For given $n\ge 1$
there exist constants $c(n)>0$ and $\lambda(n)>0$ such that $f(n+1,N)\ge  c(n)
N^{-\lambda(n)}$
for all $N\ge 1$. 
\end{conjecture*}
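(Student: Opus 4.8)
The plan is to reduce the conjecture to the counting results of this paper. Fix $n\ge 1$ and set
\begin{equation*}
  X=X_n=\Bigl\{(\theta_1,\ldots,\theta_n)\in[0,1]^n : 1+e^{2\pi i\theta_1}+\cdots+e^{2\pi i\theta_n}=0\Bigr\}\subset\IR^n.
\end{equation*}
This is the vanishing locus on $[0,1]^n$ of the real analytic function $F(\theta)=\bigl|1+\sum_{j=1}^n e^{2\pi i\theta_j}\bigr|^2$, so $X$ is closed and definable in $\IRan$, which is polynomially bounded. If $1+\zeta_1+\cdots+\zeta_n\ne 0$ with $\zeta_j^N=1$, write $\zeta_j=e^{2\pi i a_j/N}$ with $a_j\in\IZ$; then $q=(a_1/N,\ldots,a_n/N)\in\IQ^n(N,1)$ has height at most $N$, and by the classical \L ojasiewicz inequality applied to $F$ on the compact set $[0,1]^n$ there are $c_0=c_0(n)>0$ and $\alpha=\alpha(n)>0$ with
\begin{equation*}
  \dist{q,X}\le c_0\,F(q)^{1/\alpha}=c_0\bigl|1+\zeta_1+\cdots+\zeta_n\bigr|^{2/\alpha}.
\end{equation*}
Thus an abnormally small value of $f(n+1,N)$ produces a rational point of height at most $N$ that approximates $X$ exceedingly well.

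The next step is to describe $\alg{X}$. Mann's theorem on vanishing sums of roots of unity shows that every torsion tuple on $X$, and in fact every positive dimensional connected real semi-algebraic subset of $X$, lies on a torsion coset of a subtorus; in the angle coordinates such cosets are rational affine subspaces of $(\IR/\IZ)^n$ defined by integral linear forms with constant terms of denominator bounded in terms of $n$. Hence $\alg{X}$ is a finite union of such subspaces $L_1,\ldots,L_t$, each contained in $X$. This yields a gap principle: since a rational number with denominator dividing $N$ is either $0$ or of absolute value at least $1/N$, there is $c_1=c_1(n)>0$ such that any $q\in\IQ^n(N,1)$ within distance $c_1/N$ of some $L_i$ must lie on $L_i$; and then $F(q)=0$, i.e.\ $1+\zeta_1+\cdots+\zeta_n=0$, contradicting our assumption. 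For $n\le 3$ one checks, again via Mann, that $X$ is \emph{itself} a finite union of such $L_i$, so combining the \L ojasiewicz bound with the gap principle — taking $\lambda$ with $2\lambda/\alpha>1$ so that $c_0 N^{-2\lambda/\alpha}<c_1/N$ for all large $N$, the finitely many remaining $N$ contributing a constant — already proves the conjecture for $n\le 3$.

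For general $n$, where $X$ carries transcendental positive dimensional pieces on which $F$ vanishes, I would feed the good rational approximations to $X$ into Theorem \ref{thm:approx2} (or the sharper Theorem \ref{thm:approxqac}): for $\lambda\ge\theta^{-1}$, all but at most $cN^\epsilon$ of the points of $\IQ^n(N,1)$ approximating $X$ to within $N^{-\lambda}$ lie within $N^{-\theta\lambda}$ of $\alg{X}$. Choosing in addition $\theta\lambda>1$, the gap principle forces each non-exceptional such point onto some $L_i$, i.e.\ the corresponding roots of unity satisfy a proper "subtorus" relation; grouping $\{1,\zeta_1,\ldots,\zeta_n\}$ along the blocks of that relation and rescaling each block produces a genuine vanishing sum of fewer roots of unity, so an induction on $n$ applies. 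Translated back through the \L ojasiewicz inequality, this shows that the number of $N$-th roots of unity tuples with $|1+\zeta_1+\cdots+\zeta_n|<N^{-\lambda}$ is at most $cN^\epsilon$, with $c$ and $\lambda$ depending only on $n$ and $\epsilon$.

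The genuine obstacle is the passage from this \emph{counting} statement to the \emph{existence} statement the conjecture demands: Theorem \ref{thm:approx2} bounds how many good rational approximations to $X$ occur, not whether any occurs, so a single exceptional modulus $N$ carrying a super-polynomially small sum is not excluded — the minimizing tuple could be precisely one of the $\le cN^\epsilon$ exceptional points, which for a single $N$ one cannot rule out. The natural attempt to manufacture the $\gg N^\epsilon$ further small sums needed to apply the theorem, namely passing to the Galois conjugates $1+\zeta_1^k+\cdots+\zeta_n^k$ with $\gcd(k,N)=1$, fails because their product is a nonzero rational integer and hence of absolute value at least $1$: this bounds the number of small conjugates from above but not from below, and that number may well be just one. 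Circumventing this — or extracting extra arithmetic structure from rational points lying near the transcendental part of $X$ when $n\ge 4$ — is where the argument stalls, which I take to be the reason the conjecture remains open beyond $n=3$.
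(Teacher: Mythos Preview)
The statement is a \emph{conjecture}, and the paper does not prove it; it is presented as open for $n\ge 4$, with Theorem~\ref{thm:pvaries} offered only as supporting evidence. You recognise this yourself in the final paragraph, so your proposal is not a failed proof but an accurate diagnosis that matches the paper's own position.

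Your analysis in fact recapitulates the strategy behind the proof of Theorem~\ref{thm:pvaries}: pass from a small sum to a good rational approximation of $X$ via the \L ojasiewicz Inequality, invoke Theorem~\ref{thm:approx2} to force most such approximations near $\alg{X}$, then use the structure of $\alg{X}$ to extract a vanishing subsum and induct on $n$. One correction: the fact that the positive-dimensional connected semi-algebraic pieces of $X$ lie on rational affine subspaces is a consequence of Ax's theorem (which is what the paper invokes in Section~\ref{sec:apps}); Mann's theorem concerns only the torsion points. Your treatment of $n\le 3$ is correct --- for $n=3$ the elementary symmetric functions show that one root must equal $-1$ and the remaining two are antipodal, so $X$ is a finite union of rational line segments --- and recovers the known cases. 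Your identification of the obstruction for $n\ge 4$ is exactly right: Theorem~\ref{thm:approx2} bounds the \emph{number} of exceptional approximations by $cN^\epsilon$ but does not exclude a single one, and a single minimising tuple at a given $N$ is all a counterexample to the conjecture would require.
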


We use our result on approximations on definable sets to give some
 give credence to this conjecture.  
Indeed,  we show that set of the prime orders $N=p$ where the
conjecture fails is 
sparse.

\begin{theorem}
\label{thm:pvaries}
For $\epsilon > 0, n\ge 1,$ and $a_0,\ldots,a_n\in\IC\ssm\{0\}$. 
  there exist constants $c=c(a_0,\ldots,a_n,\epsilon)\ge 1$
and $\lambda = \lambda(a_0,\ldots,a_n,\epsilon)>0$ 
such that
\begin{alignat*}1
  \#\{ p \le T \text{ is a prime }: \quad &\text{there are
$\zeta_1,\ldots,\zeta_n\in\IC$ with $\zeta_1^p=\cdots=\zeta_n^p=1$ and
} 
\\ 
&0<\left|a_0+a_1 \zeta_1+\cdots + a_n\zeta_n\right|<
    c^{-1}p^{-\lambda} \}\le cT^\epsilon
\end{alignat*}
for all $T\ge 1$. 
\end{theorem}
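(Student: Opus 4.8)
The plan is to realise $a_0+\sum_j a_j\zeta_j$ as the value at a rational point of (the square of the modulus of) a trigonometric polynomial, to apply Corollary~\ref{cor:noalglocus} to the vanishing locus of that trigonometric polynomial, and to pass from individual primes to the count $\le cT^{\epsilon}$ by a dyadic decomposition. First I would identify each $p$-th root of unity with $e^{2\pi i k/p}$ for a unique $k\in\{0,\dots,p-1\}$ and introduce
\[
  F(x_1,\dots,x_n)=\Bigl|\,a_0+\sum_{j=1}^{n} a_j e^{2\pi i x_j}\,\Bigr|^{2},
\]
a polynomial in $\cos(2\pi x_j)$ and $\sin(2\pi x_j)$, so that $F\ge 0$ and the restriction of $F$ to $[0,1]^n$ is definable in $\IRan$, a polynomially bounded o-minimal structure. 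Put $X=\{x\in[0,1]^n : F(x)=0\}$, a compact definable set. If $X=\emptyset$, then $F$ is bounded below on $[0,1]^n$ by a positive constant depending only on $a_0,\dots,a_n$, so for any $c\ge1$ and $\lambda>0$ only boundedly many primes satisfy the condition in the theorem and we are done; so assume $X\neq\emptyset$. The key point is that $\alg{X}=\emptyset$, which I would deduce from Ax's Theorem \cite{AxSchanuel}, as suggested by Example~\ref{ex:needpb}: if $X$ contained a connected semi-algebraic set of positive dimension, I would restrict the identity $a_0+\sum_j a_j e^{2\pi i x_j}=0$ to a semi-algebraic arc in it, discard the coordinate functions that are constant along the arc (they cannot all be constant, and the surviving constant part retains a nonzero ``$a_0$''-term because $a_0\neq0$), pass to a $\IZ$-basis $u_1(t),\dots,u_r(t)$ of the group generated modulo constants by the remaining coordinate functions, and rewrite the identity as a nontrivial Laurent-polynomial relation over $\IC$ in $e^{2\pi i u_1(t)},\dots,e^{2\pi i u_r(t)}$; as $u_1(t),\dots,u_r(t)$ are algebraic over $\IC(t)$ and $\IQ$-linearly independent modulo constants, this violates the transcendence-degree inequality of Ax's Theorem. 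The hypotheses $a_j\neq0$ are precisely what keeps the Laurent polynomial from collapsing to zero.

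Next I would combine two inputs. Corollary~\ref{cor:noalglocus}, applied to $X$ with $\degvar=1$ and the given $\epsilon$, produces constants $c_0>0$ and $\lambda_0>0$ with
\[
  \#\bigl\{\,q\in\IQ^n : H(q)\le S\ \text{and}\ \dists{q,X}<S^{-\lambda_0}\,\bigr\}\le c_0 S^{\epsilon}\qquad\text{for all }S\ge1,
\]
since for $S\ge1$ the condition $\dists{q,X}<S^{-\lambda_0}$ is equivalent to the existence of $x\in X$ with $|x-q|<S^{-\lambda_0}$. The \L ojasiewicz inequality, applied to the continuous definable functions $x\mapsto\dists{x,X}$ and $x\mapsto F(x)$ on the compact definable set $[0,1]^n$ — both of which vanish exactly on $X$ — produces $C_0\ge1$ and $\alpha\in(0,1]$ with $\dists{x,X}\le C_0 F(x)^{\alpha}$ on $[0,1]^n$. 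I then set $\lambda:=(2\lambda_0+1)/(2\alpha)$ and take $c$ large, to be enlarged finitely often. If $p$ is a prime and $\zeta_1^p=\dots=\zeta_n^p=1$ satisfy $0<|a_0+\sum_j a_j\zeta_j|<c^{-1}p^{-\lambda}$, write $\zeta_j=e^{2\pi i k_j/p}$ with $0\le k_j<p$ and put $q_p=(k_1/p,\dots,k_n/p)\in[0,1)^n\cap\IQ^n$. Since $p$ is prime, $H(q_p)\le p$; and $F(q_p)=|a_0+\sum_j a_j\zeta_j|^2<c^{-2}p^{-2\lambda}$, whence
\[
  \dists{q_p,X}\le C_0 c^{-2\alpha}p^{-2\lambda\alpha}=C_0 c^{-2\alpha}p^{-2\lambda_0-1}<p^{-2\lambda_0}
\]
once $c\ge C_0^{1/(2\alpha)}$. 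Finally, if $q_p=q_{p'}$ for distinct primes then comparing denominators forces every $k_j=0$, i.e.\ all $\zeta_j=1$; since $|a_0+a_1+\dots+a_n|$ is a fixed number, this can occur for at most boundedly many $p$ (for none if that number vanishes), so $p\mapsto q_p$ is injective on the primes counted in the theorem apart from a set of $O(1)$ of them.

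The last step is a dyadic decomposition, needed because the approximation scale $p^{-\lambda}$ is measured against $p$, which may be far smaller than the height budget $T$. Fix $T\ge1$. For each integer $m\ge0$ with $T/2^m>4$, a prime $p$ counted in the theorem with $T/2^{m+1}<p\le T/2^m$ has $H(q_p)\le T/2^m$ and, using $p>T/2^{m+1}$ and $T/2^m>4$,
\[
  \dists{q_p,X}<p^{-2\lambda_0}\le 2^{2\lambda_0}(T/2^m)^{-2\lambda_0}<(T/2^m)^{-\lambda_0};
\]
so the displayed consequence of Corollary~\ref{cor:noalglocus} with $S=T/2^m$, together with the injectivity of $p\mapsto q_p$, shows that there are at most $c_0(T/2^m)^{\epsilon}$ such primes. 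Summing over $m\ge0$ and adding the $O(1)$ primes not covered (those $\le4$ and those few $p$ with $q_p=0$) gives, for the cardinality $N(T)$ in the theorem,
\[
  N(T)\le c_0 T^{\epsilon}\sum_{m\ge0}2^{-m\epsilon}+O(1)=\frac{c_0}{1-2^{-\epsilon}}\,T^{\epsilon}+O(1)\le cT^{\epsilon}\qquad(T\ge1)
\]
once $c$ is chosen large enough in terms of $c_0$, $\epsilon$ and that $O(1)$, hence in terms of $a_0,\dots,a_n$ and $\epsilon$ only; and $\lambda$ depends only on $\lambda_0$ and $\alpha$, hence again only on $a_0,\dots,a_n$ and $\epsilon$. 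The main obstacle will be the first step: establishing $\alg{X}=\emptyset$ from Ax's Theorem while handling the degenerate $\IQ$-linear relations among the coordinate functions of a hypothetical semi-algebraic arc; granting that, the remaining ingredients are the \L ojasiewicz inequality and Corollary~\ref{cor:noalglocus}, and the only further subtlety is the dyadic splitting forced by the mismatch between $p$ and $T$.
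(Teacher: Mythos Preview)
Your central claim that $\alg{X}=\emptyset$ is false in general, and this breaks the proof. Take $n=3$ and $a_0=a_1=a_2=a_3=1$. Then for every $t\in[0,1/2]$ the point $(t,t+1/2,1/2)$ lies in $X$, since
\[
1+e^{2\pi i t}+e^{2\pi i(t+1/2)}+e^{2\pi i/2}=1+e^{2\pi i t}-e^{2\pi i t}-1=0.
\]
This is a semi-algebraic segment of positive dimension inside $X$, so $\alg{X}\neq\emptyset$ and Corollary~\ref{cor:noalglocus} does not apply. Tracing the error in your Ax argument: along this arc only $x_3$ is constant, and after discarding it the ``surviving constant part'' is $a_0+a_3 e^{\pi i}=0$, not nonzero as you asserted; moreover, after passing to the $\IZ$-basis the Laurent polynomial becomes $(a_1+a_2 e^{\pi i})z=0$, which collapses. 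The hypothesis $a_j\neq 0$ does \emph{not} prevent such cancellations, because constants coming from discarded coordinates can kill $a_0$, and $\IZ$-linear dependencies among the surviving coordinates (modulo constants) can make distinct terms merge and cancel.

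The paper's proof is organised precisely around this obstruction. It proceeds by induction on $n$, applies Theorem~\ref{thm:approx2} (which allows a nontrivial $\alg{X}$) rather than Corollary~\ref{cor:noalglocus}, and splits into two cases. If $q$ is far from $\alg{X}$, Theorem~\ref{thm:approx2} bounds the number of such $q$ directly. If $q$ is close to $\alg{X}$, Ax's theorem is used not to derive a contradiction but to produce a \emph{vanishing proper subsum} $a_0+\sum_{j\in J}a_j e^{2\pi i x'_j}=0$ at the nearby point $x'\in\alg{X}$; this forces the corresponding subsum in the $\zeta_j$ to be small, and one applies the inductive hypothesis to the shorter sum (treating both the case where that subsum is nonzero and the complementary subsum otherwise). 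Your \L ojasiewicz step and the overall shape of the argument are fine, but you must replace the false claim $\alg{X}=\emptyset$ by this inductive treatment of the algebraic locus.
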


We briefly discuss the paper's content. 
In Section \ref{sec:notation} we introduce some common notation. 
Our \L ojasiewicz Inequality is formulated in  Section
\ref{sec:loja}, after that we construct the auxiliary function in
Section \ref{sec:auxiliary}. Section \ref{sec:qac} is a detour on
a class of cells that are locally semi-algebraic  and prove useful in the induction
step. The induction itself is done in Section \ref{sec:induction}
and in Section \ref{sec:proofs} we complete the proofs of the
approximation theorems
mentioned here in the introduction. Section \ref{sec:apps} contains the proof
of Theorem \ref{thm:pvaries} on small sums of roots of unity.

The author is indept to  important suggestions made by Jonathan Pila
at an early stage of this work
and to Felipe Voloch for pointing out a possible
 connection to small sums of roots
of unity. 
 He is grateful to Victor Beresnevich,
David Masser, and Gerry Myerson for comments.
He  thanks Margaret Thomas and Alex Wilkie for their talks
given in Manchester in 2015 and 2013, respectively. 
He also thanks  the Institute for Advanced Study in
Princeton, where this work was initiated at the end of 2013, for its hospitality. While there,
he was supported by the National Science Foundation under
agreement No.~DMS-1128155. Any opinions, findings and conclusions or
recommendations expressed in this material are those of the authors
and do not necessarily reflect the views of the National Science
Foundation.

\section{General Notation}
\label{sec:notation}

The natural numbers are $\IN = \{1,2,3,\ldots\}$ and $\IN_0 = \IN\cup \{0\}$.

Let $n\in\IN$. References to a topology are to the Euclidean topology in $\IR^n$ if
not stated otherwise.
Let $X$ be any subset of $\IR^n$, the closure of $X$ in $\IR^n$ is
denoted
by $\overline X$ and the frontier of $X$ is $\fr{X}=\overline X\ssm
X$. This should not be confused with the boundary of $X$,
 the complement in $\overline X$ of the interior of
$X$. 

We defined the height of a rational number in the introduction. More
generally, if $q\in\IQbar$, then we may proceed as follows.
Let $P\in \IZ[X]$ be the unique irreducible polynomial
with $P(q)=0$ and positive leading coefficient $p_0$. Then 
\begin{equation*}
  H(q) = \left( p_0 \prod_{z\in \IC: P(z)=0} \max\{1,|z|\}
  \right)^{1/\deg P}
\end{equation*}
is the absolute Weil height, or just height, of $q$. The height of a vector in $\IQbar^n$ is the maximal
height of a coordinate. See   Bombieri and Gubler's  Chapter 1.5
\cite{BG} for more details. 
Examples of basic height properties are
\begin{equation}
\label{eq:heightprops}
  H(q+q')\le 2H(q)H(q')\quad\text{and}\quad
H(qq')\le H(q)H(q'). 
\end{equation}

Our  reference for o-minimal structures is van den Dries's 
book \cite{D:oMin}. 
For this paper we use the following straightforward definition.

 A
structure $\fS$ is a sequence $(S_1,S_2,\ldots)$ where each $S_n$ is a
set of subsets of $\IR^n$ such that the following properties hold
true for all $n,m\in \IN$. 
\begin{enumerate}
\item [(i)] The set $S_n$ is closed under taking finite
  unions, finite intersections, and passing to the complement. 
\item[(ii)] If $X\in S_n$ and $Y\in S_m$, then 
$X\times Y\in S_{n+m}$. 
\item[(iii)] If $X\in S_n$  and $n\ge 2$, then the
  projection of $X$ onto the first $n-1$ coordinates lies in
  $S_{n-1}$. 
\item[(iv)] All real semi-algebraic sets in $\IR^n$ lie in $S_n$. 
\end{enumerate}
We call $\fS$ an o-minimal structure if in addition
\begin{enumerate}
\item[(v)] all elements in $S_1$ are finite unions of points and open,
  possibly unbounded,
  intervals.
\end{enumerate}
A set is called definable in $\fS$ if it is a member of some $S_n$.
A function defined on a subset of $\IR^n$ with values in $\IR^m$
is called definable if its graph is in $S_{n+m}$. 
Say $m\in\IN_0$. If $m=0$ we will identify $\IR^m$ with a singleton
and
 $\IR^m\times\IR^n$ with $\IR^n$. 
A definable family, or family parametrized by $\IR^m$, is a definable
subset $Z\subset\IR^{m+n}=\IR^m\times \IR^n$. 
We think of $Z$ parametrizing fibers 
$Z_y = \{x\in \IR^n : (y,x)\in Z\} \subset\IR^n$ 
where $y\in \IR^m$. 
The dimension of a definable set is defined in Chapter 4.1
\cite{D:oMin}; we follow the convention 
$\dim\emptyset=-\infty$. 

If there is no ambiguity about the  ambient o-minimal structure  $\fS$, then 
we call a set or function  definable if it is definable in $\fS$. 

Throughout this paper, we will use some basic properties of o-minimal
structures without mentioning them explicitly. 
For example, if $X$ is definable then so are $\overline X$ and $\fr{X}$, cf.
Lemma 3.4, Chapter 1 \cite{D:oMin}. Moreover, the projection of a
definable set to any collection of the coordinates is again
definable.

  Cells are always assumed to definable in
the ambient o-minimal structure. 
They are the ``building blocks'' of the definable sets,   see Chapter 3
of van den Dries's book \cite{D:oMin}.
 Let us recall some of their properties.
 \begin{enumerate}
\item[(i)]  Cells are non-empty by definition.
\item[(ii)] A cell $C\subset\IR^n$ is a locally closed subset of $\IR^n$, cf. (2.5) in
  Chapter 3 \cite{D:oMin}. So $\fr{C}$ is a closed subset
  of $\IR^n$. 
\item [(iii)] 
Let $C\subset\IR^m\times\IR^n$ be a cell.
If $y\in\IR^{m}$, then the fiber $C_y \subset\IR^m$ is either empty
  or a cell, cf. Proposition 3.5(i) in Chapter 3 \cite{D:oMin}. 
Moreover, the dimension $\dim C_y$ does not depend on $y$ if $C_y\not=\emptyset$. 
We call this value the fiber dimension of $C$ over $\IR^m$. 
\item[(iv)] Suppose $m\ge 1$,
 and write $\pi:\IR^m\times\IR^n\rightarrow\IR^m$ for
the projection onto the first $m$ coordinates of
$\IR^m\times\IR^n$. If $C\subset\IR^m\times\IR^n$ is a cell, then so is
$\pi(C)\subset\IR^m$, cf. (2.8) in Chapter 3 \cite{D:oMin}. 
\end{enumerate}

\section{Variations on \L ojasiewicz}
\label{sec:loja}

Throughout this section we work in a fixed polynomially bounded o-minimal structure.

Here is the prototype of a \L ojasiewicz Inequality for definable
functions.

\begin{theorem}[\L ojasiewicz Inequality]
\label{thm:lojasiewicz}
  Let $X\subset \IR^n$ be a  compact and definable set. Suppose that
  $f:X\rightarrow \IR$ is a continuous and definable function with zero set
  $Z\subset X$. There exist  $c>0$ and a rational number $\delta > 0$ such that 
  \begin{equation*}
    \dists{x,Z} \le c |f(x)|^{\delta}
  \end{equation*}
for all $x\in X$. 
\end{theorem}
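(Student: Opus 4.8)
The plan is to deduce the \L ojasiewicz Inequality from the one-dimensional o-minimal phenomenon, namely from the fact that in a polynomially bounded structure every definable function of one variable is, for small positive arguments, comparable to a power of the argument with rational exponent. First I would reduce to a statement about the definable function
\begin{equation*}
  g(t) = \sup\{\dists{x,Z} : x\in X,\ |f(x)|\le t\}\qquad (t\ge 0),
\end{equation*}
which is well defined because $X$ is compact and $f$ and $x\mapsto\dists{x,Z}$ are continuous; the set over which the supremum is taken is definable, so $g$ is a definable function $[0,\infty)\to[0,\infty)$ by the usual ``definable choice / definable sup'' properties of o-minimal structures (see \cite{D:oMin}). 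By construction $\dists{x,Z}\le g(|f(x)|)$ for all $x\in X$, so it suffices to bound $g(t)\le c\, t^{\delta}$ for small $t>0$ (and then handle large $t$ trivially, since $\dists{\cdot,Z}\le 1$ everywhere and one enlarges $c$).

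The key point is that $g(0)=0$: if $|f(x)|=0$ then $x\in Z$ and $\dists{x,Z}=0$, hence the supremum over the (compact, by continuity of $f$) fiber $f^{-1}(0)$ is $0$ — here I use that $g(0)$ is a genuine max attained on a compact set, so it is really $0$ and not merely a sup of zeros. Next I would argue that $g$ is continuous at $0$ from the right, or at least that $\lim_{t\to 0^+}g(t)=0$: this follows from compactness of $X$ together with continuity of $f$ and $\dists{\cdot,Z}$ by a standard contradiction argument (a sequence $x_k$ with $|f(x_k)|\to 0$ but $\dists{x_k,Z}$ bounded below would have a convergent subsequence whose limit lies in $Z$ yet stays away from $Z$). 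Monotonicity is not needed; what I really want is the o-minimal Monotonicity Theorem applied to $g$ near $0$, giving an interval $(0,t_0)$ on which $g$ is continuous and monotone. Being definable in a polynomially bounded structure and tending to $0$ as $t\to 0^+$, the function $g$ on $(0,t_0)$ satisfies $g(t)\le c\, t^{\delta}$ for some $c>0$ and some rational $\delta>0$: this is precisely the polynomial boundedness input — applying it to $t\mapsto g(t)$ near $0$ (equivalently to $s\mapsto g(1/s)$ near $+\infty$, or to $s\mapsto 1/g(1/s)$) forces $g$ to be dominated by a positive power of $t$ with rational exponent, by the structure of Hardy-field germs / power-boundedness in $\IR_{\mathrm{an}}$-like structures (cf. Section 4 of \cite{DM:96}).

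Finally I would assemble: on $(0,t_0)$ we have $\dists{x,Z}\le g(|f(x)|)\le c|f(x)|^{\delta}$; for $|f(x)|\ge t_0$ we have $\dists{x,Z}\le 1\le t_0^{-\delta}|f(x)|^{\delta}$, so replacing $c$ by $\max\{c,t_0^{-\delta}\}$ gives the inequality for all $x\in X$. The main obstacle, and the place that needs care, is the precise justification that $g$ is definable with $g(0^+)=0$ and then that polynomial boundedness yields a \emph{rational} exponent $\delta$; the first is a matter of correctly invoking definable choice and compactness, and the second is exactly the point where the hypothesis ``polynomially bounded'' enters and cannot be removed — indeed in a non-polynomially-bounded structure one can take $f(x)=e^{-1/x}$ type behaviour (as in Example \ref{ex:nonpoly}) and no power bound holds. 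An alternative, more classical route would be to use cell decomposition of $X$ compatibly with $f$ and induct on $\dim X$, at each step parametrizing a cell and reducing to the one-variable case along curves; but the ``definable sup'' argument above is cleaner and avoids the bookkeeping.
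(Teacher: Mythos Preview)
Your argument is correct. The paper's own proof of this theorem is a one-line citation: it invokes 4.14(2) of \cite{DM:96} applied to $f$ and the function $x\mapsto\dists{x,Z}$, which is precisely the two-function \L ojasiewicz inequality (if $f^{-1}(0)\subset g^{-1}(0)$ on a compact definable set then $|g|\le c|f|^{\delta}$). Your argument via the one-variable auxiliary function $t\mapsto\sup\{\dists{x,Z}:x\in X,\ |f(x)|\le t\}$ is in fact the standard way to \emph{prove} that cited result, so you have supplied the content that the paper outsources; the two approaches are the same at different levels of detail.

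One point worth sharpening: the step ``polynomial boundedness forces $g(t)\le c\,t^{\delta}$ near $0$'' does not follow from the raw definition that definable functions are bounded above by polynomials at $+\infty$. Applying that definition to $s\mapsto g(1/s)$ gives only a trivial upper bound, and applying it to $s\mapsto 1/g(1/s)$ yields a \emph{lower} bound $g(t)\ge t^{N}$, which is the wrong direction. What you actually need is Miller's valuation property for polynomially bounded structures: any definable germ at $0^{+}$ that is not ultimately zero is asymptotic to $c\,t^{r}$ for some $r$ in the field of exponents; since your $g$ is positive and tends to $0$ this forces $r>0$, and then any rational $\delta\in(0,r)$ does the job. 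This result is indeed in Section~4 of \cite{DM:96}, so your citation lands in the right place, but the mechanism is the valuation theorem rather than the polynomial bound itself. (The paper uses exactly the same fact, phrased as ``there are constants $c_2>0$ and $\delta>0$ with $\phi(t)\ge c_2 t^{1/\delta}$ for all $t\in[0,1]$'', in the proof of the next lemma.)
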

\begin{proof}
  This follows from 4.14(2) \cite{DM:96} applied to $f$ and the continuous and
  definable function $g(x) = \dists{x,Z}$. 
\end{proof}

The proof of Pila and Wilkie's Theorem \cite{PilaWilkie} relies on  an inductive
argument. To make the induction step work it
  is necessary to work with families of definable sets and
to bound 
 various quantities attached to the fibers of  the family uniformly. 
Unfortunately, the constants $c$ and $\delta$ in the \L ojasiewicz
Inequality above cannot be choosen uniformly over a definable family. 


\begin{example}
\label{ex:lojafamilies}
  We take $X = [-2,2]\times [-2,2]$ and $f(y,x)=y^2+x^2-1$. The zero set $Z$ of
  $f$ is the unit circle. We consider $X$ and $Z$
  as definable families parametrized by the  coordinate $y$. 
For all $y\in [-2,2]$, the theorem above yields $c_y>0$ and $\delta_y>0$ such
that
\begin{equation*}
  \dists{x,Z_y} \le c_y |f(y,x)|^{\delta_y}
\end{equation*}
for all $x\in Z_y$.

If
  $y<-1$ or $y>1$, then $Z_y=\emptyset$ and by our convention
$x\mapsto \dists{x,Z_y}$ is constant with value $1$ as a function in
  $x\in [-2,2]$. So $c_y |f(y,x)|^{\delta_y}\ge 1$ if $|y|>1$. Now
$|f(y,0)|=|y-1||y+1|$ is arbitrarily small as $y\rightarrow 1$ from
the right. So it is not possible to choose 
 $c_y$ and $\delta_y$  independent of $y$.

Observe that $(y,x)\mapsto \dists{x,Z_y}$ is not continuous
on $X$ as  $\dists{0,Z_y}$ jumps from $1$ to $0$ as $y\rightarrow 1$
from the right. 
\end{example}

The purpose of this section is to prove a suitable substitute for the
\L ojasiewicz Inequality above for a definable family. 

We begin with several preliminary lemmas. 
Recall that the frontier $\fr{X}$ of a set $X\subset\IR^n$ is 
$\overline X \ssm X$. 

\begin{lemma}
\label{lem:borderbound}
  Let $X\subset\IR^n$ be  bounded, locally closed, and
definable and suppose $f:X\rightarrow \IR$ is a
  continuous, definable function with $f(x)\not=0$ for all $x\in X$. 
There exist   $c=c(X,f)> 0$ and a rational number  $\delta=\delta(X,f) > 0$ such that 
\begin{equation*}
  \dists{x,\fr{X}} \le c |f(x)|^{\delta}
\end{equation*}
for all $x\in X$. 
\end{lemma}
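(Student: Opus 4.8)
The plan is to reduce Lemma~\ref{lem:borderbound} to the classical \L ojasiewicz Inequality (Theorem~\ref{thm:lojasiewicz}) by passing to the closure $\overline X$, which is compact since $X$ is bounded. The obstruction is that $f$ is only defined and non-vanishing on $X$, not on $\overline X$, and need not extend continuously; moreover $\dists{x,\fr X}$ is what we want to bound, so we should look for a continuous definable function on $\overline X$ whose zero set is exactly $\fr X$ and whose size is controlled by $|f(x)|$ on $X$. First I would introduce the graph trick: consider
\begin{equation*}
  \Gamma = \left\{ (x,t)\in X\times\IR : t\, f(x) = 1 \right\} \subset \IR^{n+1},
\end{equation*}
which is definable, and note that the projection $\Gamma\to X$ is a definable homeomorphism (with inverse $x\mapsto (x,1/f(x))$) since $f$ never vanishes on $X$. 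Because $X$ is locally closed, so is $\Gamma$, and I would argue that the frontier $\fr{\Gamma}$ (computed inside $\IR^{n+1}$) projects into $\fr X$: a point of $\overline\Gamma\ssm\Gamma$ has an $X$-coordinate in $\overline X$, and it cannot lie over a point $x_0\in X$, for then $t$ would have to blow up (as $f(x)\to f(x_0)\neq 0$ forces $t\to 1/f(x_0)$, contradicting non-convergence), so the $X$-coordinate lies in $\fr X$.

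Next I would apply the \L ojasiewicz Inequality in the form already available. The cleanest route is to apply Theorem~\ref{thm:lojasiewicz} on the compact definable set $\overline X$ to a continuous definable function vanishing precisely on $\fr X$. The natural candidate is
\begin{equation*}
  g:\overline X\to\IR,\qquad g(x) = \frac{\dists{x,\fr X}}{1 + |x|^2 + 1/\,\min\{1,|f(x)|\}^2}
\end{equation*}
extended by $g(x)=0$ on $\fr X$ — but one must check continuity of this extension, which is exactly where local closedness and the behaviour of $f$ near $\fr X$ enter. A more robust alternative, and the one I expect to use, is to invoke Lemma~\ref{lem:borderbound}'s hypotheses to produce, via the curve selection lemma or the decomposition into cells, a continuous definable $F:\overline X\to[0,\infty)$ with $\zeroset{F}=\fr X$ such that $F(x)\le |f(x)|$ on $X$ (shrink $|f|$ if necessary and take a suitable definable regularization near the frontier); then Theorem~\ref{thm:lojasiewicz} gives $c>0$ and rational $\delta>0$ with $\dists{x,\fr X}\le c\,F(x)^{\delta}\le c\,|f(x)|^{\delta}$ for $x\in X$, which is the claim.

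The main obstacle is the construction of such an $F$ — equivalently, showing that $|f|$ itself (or a controlled modification of it) cannot stay bounded away from zero as $x$ approaches $\fr X$ while $\dists{x,\fr X}$ fails to be polynomially dominated. Here polynomial boundedness of the o-minimal structure is essential, and I would handle it by the standard one-variable reduction: by the curve selection lemma, any failure of the desired inequality is witnessed along a definable arc $\gamma:(0,\varepsilon)\to X$ with $\gamma(s)\to x_0\in\fr X$; along $\gamma$ both $s\mapsto \dists{\gamma(s),\fr X}$ and $s\mapsto |f(\gamma(s))|$ are definable functions of one variable, hence (after possibly shrinking $\varepsilon$) either eventually $0$ or comparable to a power $s^{q}$ with $q\in\IQ$ by the polynomial boundedness and monotonicity theorems; since $f$ is non-vanishing on $X$ and continuous, $|f(\gamma(s))|$ is bounded below by a positive power of $s$, while $\dists{\gamma(s),\fr X}\le |\gamma(s)-x_0|$ tends to $0$ at some rational rate, and comparing the two exponents yields the required $\delta$. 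Finally I would use compactness of $\fr X$ (or of $\overline X$) together with the fact that there are only finitely many ``exponents'' occurring, by o-minimality, to pass from the local arc-wise estimate to a single global pair $(c,\delta)$ valid for all $x\in X$.
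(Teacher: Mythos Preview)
Your explicit function $g(x) = \dists{x,\fr X}/\bigl(1+|x|^2+1/\min\{1,|f(x)|\}^2\bigr)$, extended by zero on $\fr X$, already gives a complete proof, and you should not have abandoned it. Continuity on $\overline X$ is immediate: as $x\to x_0\in\fr X$ the numerator tends to $0$ while the denominator stays $\ge 1$. Since $X$ is locally closed, $\fr X$ is closed, so $\dists{x,\fr X}>0$ for $x\in X$ and hence $\zeroset{g}=\fr X$. The denominator is at least $1/\min\{1,|f(x)|\}^2$, so $g(x)\le\min\{1,|f(x)|\}^2\le|f(x)|^2$, and Theorem~\ref{thm:lojasiewicz} on the compact set $\overline X$ gives $\dists{x,\fr X}\le c\,g(x)^\delta\le c\,|f(x)|^{2\delta}$. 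The graph $\Gamma$ you introduce is never used and can be deleted.

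The paper's proof is close in spirit but packages the extension-to-the-boundary step differently: rather than writing down $g$ by hand, it invokes Lemma~C.8 of \cite{DM:96} to produce an odd increasing definable bijection $\phi:\IR\to\IR$ such that $\phi(\dists{x,\fr X})/f(x)$ extends continuously by zero across $\fr X$; compactness of $\overline X$ then bounds this quotient, and polynomial boundedness is used at the end to invert $\phi$. Your explicit $g$ avoids that black box and is arguably cleaner.

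By contrast, the curve-selection route you call ``more robust'' and say you would actually use has a genuine gap. The claim that ``there are only finitely many exponents occurring, by o-minimality'' is false: the field of exponents of a polynomially bounded structure can be all of $\IQ$. What you need is that a \emph{single} exponent works globally, and this does not follow from patching arc-wise estimates via compactness of $\fr X$, since the arcs are indexed by an uncountable set and compactness alone gives no bound on a supremum of exponents. The correct one-variable reduction is to introduce the single definable function $\psi(t)=\sup\{\dists{x,\fr X}:x\in X,\ |f(x)|\le t\}$, check that $\psi(t)\to 0$ as $t\to 0^+$ (any sequence $x_k\in X$ with $|f(x_k)|\to 0$ subconverges in $\overline X$ to a point where $f$ would vanish, hence to a point of $\fr X$), and then apply polynomial boundedness directly to $\psi$. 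That works, but it is not what you wrote.
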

\begin{proof}
  We set $g(x) = \dists{x,\fr{X}}$ which yields
a continuous, definable function $g:\overline X \rightarrow \IR$. 
Certainly, $g(x)=0$ for $x\in \fr{X}$. Conversely, if $x\in \overline
X$ and  $g(x)=0$ then we may fix a sequence
$x_1,x_2,\ldots\in \fr{X}$ with limit $x$. 
The frontier $\fr{X}$ is closed in $\IR^n$ as $X$ is locally
closed,  so $x\in \fr{X}$.
Therefore, $g$ vanishes precisely on the frontier $\fr{X}$.

We may apply Lemma
C.8 \cite{DM:96} to $\overline{X}$ and the functions $g$ and $f_1 =
f^{-1}:X\rightarrow\IR$, which are
continuous and definable.
We obtain a definable, continuous, odd, increasing, bijective map
$\phi:\IR\rightarrow\IR$
with $\phi(0)=0$  (as defined on page 512 \cite{DM:96} with $p=0$) such
that
for any $y\in \fr{X}$ we have
$\phi(g(x))/f(x)\rightarrow 0$ if $x\rightarrow y$ with $x\in X$. 
We set $h(x) =\phi(g(x))/f(x)$ if $x\in X$ and $h(x)=0$ if $x\in
\fr{X}$.
Thus $h:\overline X\rightarrow \IR$ is continuous. 

Now $\overline X$ is compact as $X$ is bounded. So there exists $c_1>0$
with $|h(x)|\le c_1$ for all $x\in \overline X$.
Observe that $\phi(g(x))\ge 0$ since $g(x)\ge 0$ and because $\phi$ is odd and increasing. 
Therefore, $\phi(g(x))\le c_1 |f(x)|$ for all $x\in X$.

Finally, as the ambient o-minimal structure is polynomially bounded
 there are constants $c_2>0$ and $\delta > 0$ with 
$\phi(t)\ge c_2 t^{1/\delta}$ for all $t\in [0,1]$. 
We may assume that $\delta\in \IQ$. The lemma follows
 with
 $c = (c_1/c_2)^{\delta}$ since
$g$ takes values in $[0,1]$. 
\end{proof}

The fact that $\delta$ is rational above entails that $t\mapsto
t^{\delta}$ is a definable function. 

We state an easy consequence of Proposition C.13 \cite{DM:96}.
\begin{lemma}
\label{lem:Lojasiewicz1}
  Let $X\subset\IR^n$ be a locally closed, definable set and suppose $f,g:X\rightarrow \IR$ are
  continuous and definable functions  such that $x\in X$ and $f(x)=0$ entails
  $g(x)=0$ and such that $g$ is bounded.
There exists a rational number $\delta=\delta(X,f,g) > 0$ and a
continuous and definable
function $h:X\rightarrow\IR$ with 
\begin{equation*}
  |g(x)|\le  |h(x) f(x)|^{\delta}
\end{equation*}
for all $x\in X$.
\end{lemma}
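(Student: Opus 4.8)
The plan is to deduce the lemma in one step from Proposition~C.13 of \cite{DM:96}, a \L ojasiewicz-type inequality valid on an arbitrary locally closed definable set. Our hypotheses match those of that proposition: $X$ is locally closed and definable, the functions $f,g\colon X\to\IR$ are continuous and definable, $g$ is bounded, and the assumption that $f(x)=0$ forces $g(x)=0$ says exactly that $\zeroset{f}\subseteq\zeroset{g}$. Proposition~C.13 then provides a positive integer $N$ and a continuous definable function $h\colon X\to\IR$ such that $|g(x)|^{N}\le |h(x)f(x)|$ for all $x\in X$.

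From here the argument is immediate. Put $\delta=1/N$, a positive rational number; raising the previous inequality to the power $1/N$ gives $|g(x)|=\bigl(|g(x)|^{N}\bigr)^{1/N}\le |h(x)f(x)|^{1/N}=|h(x)f(x)|^{\delta}$ for all $x\in X$, which is the desired estimate. As $\delta$ is rational the map $t\mapsto t^{\delta}$ is semi-algebraic, so no definability issue arises, although none is claimed in the statement. There is no real obstacle in this deduction; all of the content sits in Proposition~C.13, whose proof in turn rests on the cell decomposition of locally closed definable sets together with the classical compact \L ojasiewicz inequality, the former supplying a uniform exponent and a continuous replacement for the \L ojasiewicz constant.

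If instead one wished to argue from the results recalled in this section rather than cite Proposition~C.13 directly, the route would be: first reduce to the case $|f|\le 1$ and $|g|\le 1$ by replacing $f$ with $f/\max\{1,|f|\}$ and rescaling $g$, operations that preserve the hypotheses and change the conclusion only by modifying $h$ by a constant factor; next replace $X$ by the graph $\{(x,f(x),g(x)):x\in X\}$, which is again locally closed because $X$ is, so that $f$ and $g$ become restrictions of coordinate projections; and finally combine the classical \L ojasiewicz inequality on compact pieces with Lemma~\ref{lem:borderbound}, applied to the open definable subset where $f\ne 0$, to bound $|g(x)|^{1/\delta}/|f(x)|$ by a continuous definable function near $\fr{X}$ and at infinity, thereby producing the required $h$. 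Since this merely reproves Proposition~C.13, invoking it is the efficient path, and I expect the one-line deduction above is the intended proof.
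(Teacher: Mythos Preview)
Your approach is essentially the paper's: invoke Proposition~C.13 of \cite{DM:96} and read off the conclusion. One small discrepancy: as the paper records it, C.13 produces a definable, odd, increasing bijection $\phi:\IR\to\IR$ with $\phi(g(x))=h(x)f(x)$, not an integer exponent directly; the passage from $\phi$ to a rational power $t\mapsto t^{1/\delta}$ uses the polynomially bounded hypothesis exactly as at the end of the proof of Lemma~\ref{lem:borderbound}, and the paper's proof simply refers back to that argument.
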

\begin{proof}
By Proposition C.13 \cite{DM:96} there is $\phi:\IR\rightarrow\IR$ as in the
proof
of Lemma \ref{lem:borderbound} and a continuous, definable function $h:X\rightarrow\IR$ with $\phi(g(x)) = h(x)f(x)$ for all
$x\in X$. 
Observe that $|\phi(g(x))| = \phi(|g(x)|)$. 
The rest of the proof is  now much as the end of the proof
of Lemma \ref{lem:borderbound}. 
\end{proof}

We identify polynomials in $\IR[X_1,\ldots,X_m]$ of degree bounded by
$d\ge 0$ including the zero polynomial with  $\IR^l$ where  $l  = {m+d \choose m}$. 
Thus each $f\in \IR^l$ corresponds to a polynomial in $m$ variables
and we write $\zeroset{f}$ for its  set of zeros in $\IR^m$.

Suppose $m\ge 0$ and let $Z\subset\IR^m\times\IR^n$ be a definable family
parametrized by $\IR^m$.
Let $Y\subset\IR^m$ be the projection of $Z$ to $\IR^m$. It is a
definable set and for $(y,x)\in Y\times\IR^n$ 
\begin{equation*}
  (y,x)\mapsto \inf \{ |x-x'| : x' \in Z_y \}
\end{equation*}
yields a definable function
$Y\times\IR^n\rightarrow \IR$.
So $(y,x)\mapsto \dists{x,Z_y}$ is definable
on $Y\times\IR^n$ and even on $\IR^m\times\IR^n$. 
We cannot expect it to be
 continuous due to Example
\ref{ex:lojafamilies}. 

We come to the first variant of the \L ojasiewicz
Inequality from the beginning of this section.

\begin{lemma}[Flexing]
Let $Z\subset\IR^{l}\times\IR^m\times\IR^n$ be  bounded, definable,
and non-empty. 
 There exist  $c=c(Z) \in (0,1]$, a rational number
$\delta=\delta(Z)>0$, and a compact and definable
 set $Z'\subset\overline Z$
 with $\dim Z'<\dim  Z$  
such that the following property holds. Suppose $f\in \IR^l$,
$y\in\IR^m$,  and 
$x \in Z_{(f,y)}$ such that $|f(x)|\le c$. 
Then 
$\dists{x,Z_{(f,y)}\cap\zeroset{f}} \le  |f(x)|^{\delta}$ or 
$\dists{(f,y,x),Z'} < |f(x)|^{\delta}$. 
\end{lemma}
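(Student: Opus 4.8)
The plan is to combine the two preceding \L ojasiewicz-type lemmas with a stratification of the base so that the fibers $Z_{(f,y)}$ become locally closed, cutting out the bad locus as a lower-dimensional piece $Z'$. First I would apply a definable cell decomposition (or the Generic Trivialization Theorem) to $Z\subset\IR^l\times\IR^m\times\IR^n$ compatible with $Z$ and with the definable subset on which the coordinate functions relating $x$ to $\zeroset f$ behave well; the point is to arrange that, after removing a closed definable subset $Z'\subset\overline Z$ of dimension $<\dim Z$, what remains is a finite union of cells on which matters are uniform. Concretely, set $g(f,y,x)=\dists{x,Z_{(f,y)}\cap\zeroset f}$, which is definable on the relevant domain, and note that on $Z\ssm Z'$ the pair $(|f(x)|,g(f,y,x))$ has the property that $|f(x)|=0$ forces $g=0$ (since then $x\in\zeroset f$). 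I would then invoke Lemma \ref{lem:Lojasiewicz1} with the roles of $f,g$ played by $x\mapsto f(x)$ and $g$ on the locally closed definable set $Z\ssm Z'$: this yields a rational $\delta_0>0$ and a continuous definable $h$ with $g\le |h(x)f(x)|^{\delta_0}$ there. Bounding $h$ is the issue: it need not be bounded on all of $Z\ssm Z'$, and it may blow up exactly as $(f,y,x)$ approaches $Z'=\fr(Z\ssm Z')$.

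To handle the blow-up of $h$, I would use Lemma \ref{lem:borderbound} applied to the bounded, locally closed, definable set $Z\ssm Z'$ and to a nonvanishing continuous definable function built from $f$ (for instance $f(x)$ itself, restricted to where it is nonzero; the case $f(x)=0$ is trivial since then $x\in\zeroset f$ and the first alternative holds with distance $0$). This gives $\dists{(f,y,x),\fr(Z\ssm Z')}\le c'|f(x)|^{\delta_1}$. Since $\fr(Z\ssm Z')$ is contained in $Z'$ (using that $Z\ssm Z'$ is locally closed and $Z'$ closed in the ambient space, enlarging $Z'$ slightly if necessary to absorb the frontier while keeping $\dim Z'<\dim Z$ — here $\fr$ of a cell has strictly smaller dimension, cf. Chapter 3 of \cite{D:oMin}), the conclusion $\dists{(f,y,x),Z'}<|f(x)|^{\delta}$ is exactly the second alternative. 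Thus: either the relevant point is close to $Z'$, in which case the second alternative holds outright; or it is bounded away from $Z'$, in which case $|h|$ is bounded on the corresponding compact region and Lemma \ref{lem:Lojasiewicz1} delivers the first alternative $\dists{x,Z_{(f,y)}\cap\zeroset f}\le|f(x)|^{\delta}$ after absorbing constants — choosing $c\in(0,1]$ small enough that the multiplicative constants get swallowed by shrinking the exponent to a smaller rational $\delta$ and using $|f(x)|\le c$, and using that all distances involved lie in $[0,1]$ by the $\dists{\cdot}$ convention.

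The main obstacle I anticipate is the bookkeeping around $Z'$: one needs $Z'$ simultaneously compact, definable, of dimension $<\dim Z$, contained in $\overline Z$, and large enough that off $Z'$ the fiber intersections $Z_{(f,y)}\cap\zeroset f$ vary well enough for the two lemmas to apply with a \emph{single} $\delta$ and a \emph{single} $c$ independent of $(f,y)$. This is precisely where the uniformity over the family is won or lost, and it is the reason the statement splits into two alternatives rather than giving a clean single \L ojasiewicz bound. I would obtain $Z'$ by a cell decomposition argument: decompose $\overline Z$ into cells, throw into $Z'$ all cells of dimension $<\dim Z$ together with frontiers, and check that on each top-dimensional cell the reparametrization/trivialization makes $h$ and the implied constants locally constant in $(f,y)$, then use compactness of $\overline Z$ to get finitely many such, hence global uniform $c,\delta$. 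A minor technical point to be careful about is that $\delta$ must be rational so that $t\mapsto t^\delta$ is definable, which is already guaranteed by Lemmas \ref{lem:borderbound} and \ref{lem:Lojasiewicz1}; one then replaces $\min$ of the finitely many rational exponents by a common smaller rational value and uses $|f(x)|^{\delta'}\le|f(x)|^{\delta}$ for $\delta'\ge\delta$ on $[0,1]$.
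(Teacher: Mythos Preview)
Your overall architecture is right and matches the paper: partition $Z$ into cells so that $g(f,y,x)=\dists{x,Z_{(f,y)}\cap\zeroset f}$ becomes continuous on each piece, apply Lemma~\ref{lem:Lojasiewicz1} on each cell to get $g\le |h\cdot f(x)|^{\delta_0}$ with $h$ continuous, and then control the blow-up of $h$ near the frontier via Lemma~\ref{lem:borderbound}, taking $Z'$ to be that frontier. The paper does exactly this, and in fact does not need Generic Trivialization here at all --- a plain cell decomposition making $g$ continuous suffices.

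The genuine gap is in how you invoke Lemma~\ref{lem:borderbound}. You propose to feed it the function $(f,y,x)\mapsto f(x)$, restricted to where it is nonzero, and conclude $\dists{(f,y,x),\fr{Z\ssm Z'}}\le c'|f(x)|^{\delta_1}$. This is wrong on two counts. First, the frontier of $\{f(x)\neq 0\}\cap(Z\ssm Z')$ is not $\fr{Z\ssm Z'}$: it also contains all points of $Z\ssm Z'$ with $f(x)=0$, so the lemma only tells you that $(f,y,x)$ is close to \emph{some} point $(f',y',x')$ with $f'(x')=0$ --- which lies in a different fiber and says nothing about $Z_{(f,y)}\cap\zeroset f$. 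Second, and more fundamentally, the inequality you want is simply false: $|f(x)|$ can be small (or zero) well inside the cell, far from its frontier. Your fallback ``compactness'' argument has the same defect: you say that if $(f,y,x)$ is bounded away from $Z'$ then $|h|$ is bounded, but the threshold for ``bounded away'' has to be $|f(x)|^{\delta}$ for the dichotomy to match the statement, and then the bound on $|h|$ depends on $|f(x)|$, so you cannot absorb it into a single uniform exponent.

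The fix, and this is what the paper does, is to apply Lemma~\ref{lem:borderbound} to the function $\max\{1,|h|\}^{-1}$ rather than to $f(x)$. This function is continuous and nowhere zero on the cell, so the lemma gives
\[
\dists{(f,y,x),\fr Z}\ \le\ c_1\,\max\{1,|h(f,y,x)|\}^{-\delta_2},
\]
a \emph{quantitative} link between $|h|$ and the distance to the frontier. Now split on whether $|h|\,|f(x)|^{1/2}\le 1$: if so, $|h\,f(x)|\le |f(x)|^{1/2}$ and Lemma~\ref{lem:Lojasiewicz1} gives the first alternative with exponent $\delta_0/2$; if not, $|h|>|f(x)|^{-1/2}$ and the displayed inequality gives $\dists{(f,y,x),\fr Z}\le c_1|f(x)|^{\delta_2/2}$, which is the second alternative after shrinking $c$ to swallow $c_1$. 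The point is that the auxiliary function in Lemma~\ref{lem:borderbound} must be built from $h$, not from $f$.
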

\begin{proof}
Before this lemma we observed that
\begin{equation*}
\IR^l\times\IR^m\times\IR^n\ni  (f,y,x) \mapsto 
\dists{x,Z_{(f,y)}\cap\zeroset{f}} 
\end{equation*}
  yields a  definable and bounded function $g:Z \rightarrow \IR$. Its values are in $[0,1]$.
  We partition 
 $Z$ into a finite number of cells $C_1,\ldots,C_N
  \subset Z$
such that $g|_{C_i}$ is
continuous for all $1\le i\le N$. 

To prove the lemma it  suffices to prove it in the case $Z=C_i$ for
some $i$. 

Therefore, $Z$ is locally closed and $g$ is continuous. 
We apply Lemma \ref{lem:Lojasiewicz1} to $Z,$ the continuous and
 definable evaluation map
$(f,y,x)\mapsto f(x)$, and $g$ to find
\begin{equation}
\label{eq:disthfbound}
  \dists{x,Z_{(f,y)}\cap\zeroset{f}} \le |h(f,y,x) f(x)|^{\delta_1}
\quad\text{for all}\quad (f,y,x)\in Z
\end{equation}
where  $h:Z\rightarrow\IR$ is  continuous and definable  and $\delta_1
> 0$ is rational.

The sets
\begin{equation*}
  Z_1 = \{(f,y,x) \in Z : |h(f,y,x)| |f(x)|^{1/2} \le
  1\}\quad\text{and}\quad
  Z_2 = Z \ssm Z_1
\end{equation*}
 are definable.

Let  $c \in (0,1]$ and let $\delta>0$ be rational, we will determine them in the argument
below. 
Say $f$ and $y$  are as in the hypothesis and suppose
$x\in Z_{(f,y)}$ with $|f(x)|\le c\le 1$.
There are two cases. 

First let us assume $(f,y,x)\in Z_1$; this includes the case $f(x)=0$.
Then
\begin{equation*}
  \dists{x,Z_{(f,y)}\cap\zeroset{f}} \le |f(x)|^{\delta_1/2} 
\end{equation*}
follows from (\ref{eq:disthfbound}).   The
first possibility in the assertion holds  as we may assume $\delta\le
 \delta_1/2$ and since $|f(x)|\le 1$.

 The second case is $|h(f,y,x)||f(x)|^{1/2} > 1$; in particular $f(x)\not=0$.
Recall that $Z$ is bounded by hypothesis.
 Here we apply Lemma \ref{lem:borderbound} to $Z$
 and the continuous  function $Z\ni (f',y',x')\mapsto
 \max\{1,|h(f',y,'x')|\}^{-1}$ which is continuous, definable,
 and does not attain $0$.
So there is a $\delta_2\in (0,1]$ and $c_1>0$, both independent of
  $f,y,$ and $x$, with
 \begin{equation*}
   \dists{(f,y,x),\fr{Z}} \le c_1 \max\{1,|h(f,y,x)|\}^{-\delta_2}.
 \end{equation*}
We obtain
\begin{equation*}
 \dists{(f,y,x),\fr{Z}} \le c_1
\max\{1,|f(x)|^{-1/2} \}^{-\delta_2}
 = c_1 |f(x)|^{\delta_2/2} \le c_1 c^{\delta_2/4} |f(x)|^{\delta_2/4}.
 \end{equation*}
If $c$ is sufficiently small in terms of $c_1$ and $\delta_2$, then 
$\dists{(f,y,x),\fr{Z}}<|f(x)|^{\delta_2/4}\le 1$. 
We may assume $\delta\le \delta_2/4$, 
so the distance is less than 
$|f(x)|^{\delta}$. 

Now $Z'=\fr{Z}$ is definable and  satisfies $\dim Z' < \dim Z$ by Theorem 1.8 in Chapter 4
\cite{D:oMin}. 
Then  $Z'$ is closed in $\IR^l\times\IR^m\times \IR^n$ and contained
in $\overline{Z}$ as
$Z$ is locally closed. Thus $Z'$ is compact and definable; this concludes the
proof. 
\end{proof}

Next we prove a variant of the H\"older inequality  C.15 \cite{DM:96} 
without a compactness assumption. 
Suppose  $m\ge 1$ and let $\pi:\IR^m\times\IR^n\rightarrow\IR^m$ be
the projection onto the first $m$ coordinates.

\begin{lemma}
\label{lem:hoelder} 
Let $A\subset\IR^m$ be bounded, locally closed, and definable 
and let $K\subset\IR^n$ be compact and definable. 
We  suppose that $\psi:A\times K\rightarrow \IR^k$ is
a  continuous, bounded, definable function. There exist  $c=c(A,\psi)>0$ and rational numbers
 $\delta_{1,2}=\delta_{1,2}(A,\psi)>0$ such that 
  \begin{equation}
\label{eq:case2}
\min\{\dists{\pi(x),\fr{A}},\dists{\pi(y),\fr{A}}\}
\le c\min\left\{1,\frac{|x-y|^{\delta_1}}{|\psi(x)-\psi(y)|^{\delta_2}}\right\}
  \end{equation}
for all $x,y\in A\times K$ with $\psi(x)\not=\psi(y)$.
\end{lemma}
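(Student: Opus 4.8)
The plan is to reduce the statement to the compact Hölder inequality C.15 of \cite{DM:96} by the same device that was used in the proof of Lemma \ref{lem:borderbound}: replace the non-compact set $A$ by its compact closure $\overline A$ and transplant the behaviour near $\fr{A}$ into the behaviour of an auxiliary continuous function that vanishes exactly on $\fr{A}$. Concretely, set $g(a)=\dists{a,\fr{A}}$ for $a\in\overline A$; since $A$ is locally closed, $\fr{A}$ is closed in $\IR^m$, so $g$ is continuous and definable on the compact set $\overline A$ and vanishes precisely on $\fr{A}$. The left-hand side of \eqref{eq:case2} is then $\min\{g(\pi(x)),g(\pi(y))\}$. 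First I would observe that $\psi$ extends to a continuous, bounded, definable function on $\overline A\times K$ if and only if it does — and it need not — so instead I would follow Lemma \ref{lem:borderbound}: apply Lemma C.8 of \cite{DM:96} to $\overline A\times K$, to the continuous definable function $(x)\mapsto\min\{g(\pi(x)),1\}$ composed appropriately, and to $\psi$, in order to ``smooth out'' the possible discontinuity of $\psi$ across $\fr A$. This produces a definable continuous odd increasing bijection $\phi:\IR\to\IR$ with $\phi(0)=0$ such that the function which equals $\phi$ of the distance-to-$\fr A$, divided by an appropriate power of the relevant quantity, extends continuously to $\overline A\times K$.

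Next I would invoke the compact Hölder inequality C.15 \cite{DM:96}. On the compact definable set $\overline A\times K$, applied to the definable function that encodes $\min\{g\circ\pi,1\}$ and to $\psi$ (now understood on the compact set, using the continuous extension manufactured above), it yields $c_1>0$ and rational $\delta_1,\delta_2>0$ with
\begin{equation*}
\min\{g(\pi(x)),g(\pi(y))\}\le c_1\,\frac{|x-y|^{\delta_1}}{|\psi(x)-\psi(y)|^{\delta_2}}
\end{equation*}
for all $x,y$ in the relevant compact set with $\psi(x)\ne\psi(y)$, provided one has first produced a genuine continuous definable $\psi$ on the compactification — which is exactly the role of the $\phi$-smoothing. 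Since $g$ takes values in $[0,1]$ we may freely replace the right-hand side by its minimum with $1$ (enlarging $c$ to $\max\{1,c_1\}$), and since $g$ restricted to $A$ is the same as $\dists{\cdot,\fr A}$, we recover \eqref{eq:case2}. Finally, as $\delta_1,\delta_2$ are rational the functions $t\mapsto t^{\delta_i}$ are definable, so the statement is consistent with the conventions used elsewhere in this section.

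The main obstacle I anticipate is the non-compactness of $A$ interacting with the fact that $\psi$ is only defined on $A\times K$, not on $\overline A\times K$, and may genuinely fail to extend continuously; this is precisely the phenomenon exhibited in Example \ref{ex:lojafamilies}. The resolution — and the technically delicate step — is the correct bookkeeping of the Lemma C.8 \cite{DM:96} smoothing: one must check that the composite function $(x,y)\mapsto\phi(\min\{g(\pi(x)),g(\pi(y))\})/|\psi(x)-\psi(y)|^{\delta_2}$, or the appropriate two-variable analogue of $h$ in Lemma \ref{lem:borderbound}, extends continuously (hence is bounded, by compactness of $\overline A\times K$) across the locus where $\pi(x)$ or $\pi(y)$ lies in $\fr A$. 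Once that boundedness is in hand, the passage from $\phi(g)\le c_1|x-y|^{\delta_1}/|\psi(x)-\psi(y)|^{\delta_2}$ to the desired power-law inequality is the routine polynomial-boundedness step: $\phi(t)\ge c_2 t^{1/\delta_3}$ for $t\in[0,1]$ with $\delta_3\in\IQ$, exactly as at the end of the proof of Lemma \ref{lem:borderbound}. A secondary point to handle with care is the two-variable structure: C.15 \cite{DM:96} is naturally stated for a single variable, so I would either apply it on $(\overline A\times K)^2$ directly, or symmetrize by noting $\min\{g(\pi(x)),g(\pi(y))\}$ is controlled by whichever of $x,y$ is ``closer'' to the frontier and reducing to a one-variable estimate along each.
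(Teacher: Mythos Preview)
Your plan has the right instincts but runs straight into the obstacle you flag: $\psi$ need not extend to $\overline A\times K$, and the compact H\"older inequality C.15 requires continuity on the whole compact set. You propose to repair this by smoothing via C.8, but the object you say must extend continuously, namely $\phi(\min\{g(\pi(x)),g(\pi(y))\})/|\psi(x)-\psi(y)|^{\delta_2}$, already involves the unknown exponent $\delta_2$; as written the argument is circular, and it is not clear which single function on $X\times X$ you would hand to C.8 to produce the two-exponent inequality.

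The paper avoids compactifying $\psi$ altogether and works directly on $X\times X$ with $X=A\times K$ (bounded, locally closed, definable). The key is a two-step decoupling. First apply Lemma~\ref{lem:Lojasiewicz1} (Proposition C.13 of \cite{DM:96}, not C.15) to $X\times X$ with $f(x,y)=|x-y|$ and $g(x,y)=|\psi(x)-\psi(y)|$: this yields a continuous definable $h:X\times X\to[0,\infty)$ and a rational $\delta_1>0$ with
\[
|\psi(x)-\psi(y)|\le h(x,y)\,|x-y|^{\delta_1}.
\]
Crucially $h$ is allowed to be unbounded, exactly where $\psi$ fails to be uniformly H\"older. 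Second, feed the nowhere-vanishing function $\max\{1,h\}^{-1}$ into Lemma~\ref{lem:borderbound} on $X\times X$: this gives rational $\delta_2>0$ and $c>0$ with
\[
\dists{(x,y),\fr{X\times X}}\le c\,\max\{1,h(x,y)\}^{-\delta_2}.
\]
Since $K$ is compact, $\fr{X\times X}=(\fr A\times K\times\overline X)\cup(\overline X\times\fr A\times K)$, so the left-hand side is at least $\min\{\dists{\pi(x),\fr A},\dists{\pi(y),\fr A}\}$. Eliminating $h$ between the two displayed inequalities gives \eqref{eq:case2}.

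The moral: rather than trying to extend $\psi$ to the closure, encode its non-uniform H\"older behaviour in an auxiliary multiplier $h$ defined only on the open set, and then use Lemma~\ref{lem:borderbound} to show that $h$ can blow up only near the frontier.
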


\begin{proof}
Let us abbreviate $X=A\times K$. 
This is a locally closed,  bounded, and definable subset of
$\IR^{m}\times \IR^n$.

  We will apply Lemma \ref{lem:Lojasiewicz1} to 
 $X\times X$ and the functions
 $f(x,y) = |x-y|$ and  $g(x,y) = |\psi(x)-\psi(y)|$. Thus there is  a continuous definable function 
$h:X\times X\rightarrow [0,+\infty)$ and a rational number $\delta_1 =\delta_1(A,\psi)> 0$ 
with 
\begin{equation}
  \label{eq:psihdelta1}
|\psi(x)-\psi(y)|\le h(x,y)|x-y|^{\delta_1}
\end{equation}
for all $x,y\in X$. 

Let us   apply also Lemma
\ref{lem:borderbound} to  $X\times X$. This time we take as  function
$\max\{1,h(x,y)\}^{-1}$, which never vanishes on $X\times X$. We get
constants $c=c(A,f)>0$ and a rational number
$\delta_2=\delta_2(A,f)>0$ with
\begin{equation*}
  \dists{(x,y), \fr{X\times X}} \le c \max\{1,h(x,y) \}^{-\delta_2}
\end{equation*}
for all $x,y\in X$. 
Observe that
\begin{equation*}
  \fr{X\times X} = (\fr{X}\times \overline X)\cup(\overline
X\times\fr{X})= (\fr{A}\times K \times \overline{X}) \cup
(\overline{X}\times \fr{A}\times K)
\end{equation*}
 because $K$ is closed. So
\begin{equation*}
  \dists{(x,y),(\fr{A}\times K \times \overline{X}) \cup
(\overline{X}\times \fr{A}\times K)} \le c \max\{1,h(x,y)\}^{-\delta_2}
\end{equation*}
for all $x,y\in X$. 
The  left-hand side of  is at least
$\min \{\dists{\pi(x),\fr{A}},\dists{\pi(y),\fr{A}}\}$, therefore
\begin{equation}
\label{eq:distsfrXX}
\min \{\dists{\pi(x),\fr{A}},\dists{\pi(y),\fr{A}}\}\le c \max\{1,h(x,y)\}^{-\delta_2}.
\end{equation}
If $x\not=y$ we use 
(\ref{eq:psihdelta1})
to bound the
 right-hand side of (\ref{eq:distsfrXX}) from above. 
Thus
\begin{alignat*}1
  \min\{\dists{\pi(x),\fr{A}},\dists{\pi(y),\fr{A}}\} 
&\le c\max\left\{1,
\frac{|\psi(x)-\psi(y)|}{|x-y|^{\delta_1}}\right\}^{-\delta_2} 
\end{alignat*}
and the lemma follows after adjusting
$\delta_1$ and $\delta_2$. 
\end{proof}

\begin{lemma}[Straightening]
Let $Z\subset\IR^m \times\IR^n$ be compact and definable.
There exist $c=c(Z)\in (0,1]$ and a rational number
$\delta = \delta(Z)>0$ with the following property. 
If $y_0\in \pi(Z)$ and $0 < \epsilon \le c$
 there are $y_1,\ldots,y_N \in \pi(Z)$ with $N\le c^{-1}$
such that 
for any   $p\in Z$ with $|y_0-\pi(p)|\le \epsilon$
 there exist $i\in \{1,\ldots, N\}$ and $x\in Z_{y_i}$ with 
$|(y_i,x)-p|< \epsilon ^{\delta}$. 
\end{lemma}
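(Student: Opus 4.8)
The plan is to deduce this from the Generic Trivialization Theorem (Hardt's theorem) for definable maps, combined with a compactness argument and the Flexing-type estimates already available in this section. First I would apply the Trivialization Theorem to the projection $\pi|_Z : Z \to \pi(Z)$: there is a finite partition of $\pi(Z)$ into definable sets $A_1,\ldots,A_r$ and, over each $A_s$, a definable homeomorphism $Z|_{A_s} \cong A_s \times F_s$ commuting with $\pi$, where $F_s$ is a fixed definable fiber. Since $Z$ is compact, each $F_s$ may be taken compact and the trivializing homeomorphisms and their inverses are continuous; what we gain is that for $y_0, y$ in the \emph{same} piece $A_s$, the fibers $Z_{y_0}$ and $Z_y$ are ``the same up to a definable homeomorphism varying definably with the base point,'' so a point $p = (y_0,x)$ with $x \in Z_{y_0}$ can be transported to a point $(y, x')$ with $x' \in Z_y$, with $|x - x'|$ controlled by how the trivialization distorts distances.

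The core point is to make that distortion estimate quantitative and \emph{uniform}. For this I would not try to control $|x-x'|$ by $|y_0 - y|$ directly — that fails, as Example \ref{ex:lojafamilies} shows at frontier points of the base pieces — but rather combine two mechanisms. If $y_0$ lies well inside a piece $A_s$, meaning $\dists{y_0,\fr{A_s}}$ is not too small relative to $\epsilon$, then the trivializing homeomorphism over $A_s$ is uniformly continuous on the relevant compact subset, and transporting $p$ along the trivialization moves it by at most some modulus $\omega(\epsilon)$; using that the structure is polynomially bounded (via Lemma \ref{lem:borderbound} applied to the trivializing map on a neighborhood bounded away from $\fr{A_s}$) we can bound $\omega(\epsilon)$ by $c\epsilon^{\delta}$ for suitable rational $\delta>0$. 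If instead $y_0$ is close to $\fr{A_s}$, then there is a $y_i$ on the frontier (or nearby in a lower-dimensional stratum) within $\epsilon^{\delta}$ of $y_0$, and we handle that by an induction on $\dim \pi(Z)$: the frontier $\fr{A_s}$ is definable of strictly smaller dimension, so by the inductive hypothesis applied to $\overline{Z}|_{\fr{A_s}}$ (which is compact and definable) we already have finitely many base points doing the job there. The finiteness $N \le c^{-1}$ comes from taking one representative $y_i$ per piece $A_s$ for the ``interior'' case (finitely many pieces), plus the finitely many base points supplied by the inductive step on each $\fr{A_s}$, with $c$ shrunk to absorb all of these into a single constant depending only on $Z$.

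I expect the main obstacle to be organizing the two-case dichotomy and the induction so that the constants $c$ and $\delta$ genuinely depend only on $Z$ and not on $y_0$ or $\epsilon$. The delicate bookkeeping is: the modulus-of-continuity bound $\omega(\epsilon) \le c\epsilon^{\delta}$ for the trivializing homeomorphism must be valid not on all of $A_s$ (where it degenerates near $\fr{A_s}$) but only on the ``deep interior'' $\{y : \dists{y,\fr{A_s}} \ge \epsilon^{\delta}\}$, and one must check that on this set the transported point $(y,x')$ really satisfies $x' \in Z_y$ and $|(y,x') - p| = |(y,x') - (y_0,x)| \le |y - y_0| + |x' - x| < \epsilon^{\delta}$ after possibly shrinking $\delta$ — here one uses $|y-y_0| \le \epsilon \le \epsilon^{\delta}$ since $\epsilon \le c \le 1$ and $\delta \le 1$. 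A minor point is that Hardt's theorem gives a homeomorphism, not a bi-Lipschitz map, which is why the polynomially bounded hypothesis (through Lemmas \ref{lem:borderbound} and \ref{lem:Lojasiewicz1}) is essential to upgrade continuity to a power-law modulus; without it the whole estimate collapses, consistent with Example \ref{ex:nonpoly}.
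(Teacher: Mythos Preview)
Your plan matches the paper's proof in its overall architecture: Generic Trivialization of $\pi|_Z$, a dichotomy between ``deep interior'' and ``near the frontier'' of a base cell, and induction to handle the frontier case. Two points need sharpening.

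First, the dichotomy must be taken per point $p$, on $\dists{\pi(p),\fr{A_s}}$, not on $y_0$: different $p$ with $|\pi(p)-y_0|\le\epsilon$ may land in different pieces $A_s$. For the interior case the paper does not invoke Lemma~\ref{lem:borderbound} on the trivialization directly; it uses the tailor-made H\"older estimate of Lemma~\ref{lem:hoelder}, which packages exactly the combination of Lemma~\ref{lem:Lojasiewicz1} (for the modulus $|\psi(\pi(p),z)-\psi(y_i,z)|\le h\cdot|\pi(p)-y_i|^{\delta_1}$) and Lemma~\ref{lem:borderbound} (to control $h$ away from $\fr{A_s}$). Your sketch points to the right ingredients but not to their assembly.

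Second, and more substantively, your frontier case has a gap. Knowing $\pi(p)$ is close to $\fr{A_s}$ does \emph{not} by itself put $p$ close to any point of $Z|_{\fr{A_s}}$, so you cannot feed $p$ into the inductive hypothesis for $\overline{Z}|_{\fr{A_s}}$. The paper bridges this with Lemma~\ref{lem:borderbound} applied to $\pi|_Z^{-1}(A_s)$ and the nowhere-vanishing function $p'\mapsto\dists{\pi(p'),\fr{A_s}}$: this yields $\dists{p,\fr{\pi|_Z^{-1}(A_s)}}\le c\,\dists{\pi(p),\fr{A_s}}^{\delta}$, so $p$ is close to a point $p'$ in the frontier $Z'=\fr{\pi|_Z^{-1}(A_s)}\subset Z$, and it is $Z'$ (with $\dim Z'<\dim Z$) to which the paper applies the induction---on $\dim Z$, not $\dim\pi(Z)$. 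Your induction on $\dim\pi(Z)$ would also work once this bridging step is in place, since $\pi(Z')\subset\fr{A_s}$.
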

\begin{proof}
If $Z$ is a finite set
 we can take the $y_i$ to  be all elements in  projection of $Z$ to
$\IR^m$ and $c$ small enough to ensure that
$|y_0-\pi(p)|\le c$ entails $y_0=\pi(p)$. 
In this case we may take $(y_i,x)=p$. 

We now assume $\dim Z\ge 1$. The proof is by induction
where  we suppose that the lemma is proved  in dimensions strictly less than
$\dim Z$. 

Below, the constants $c_{1,2}$ and $\delta_{1,2,3,4}$ are positive and  depend only on $Z$.
The constants $c>0$ and $\delta>0$ from the assertion may depend on them
and will be determined below. 

  By the Generic Trivialization Theorem 
1.2, Chapter 9 \cite{D:oMin} we can
partition 
 each $\pi(Z)$ into finitely many cells
$C_{1}\cup\cdots \cup C_{N}$ such that 
$\pi|_Z:Z\rightarrow\pi(Z)$ is definably
trivializable over each $C_i$. 
We let $\psi_i:C_i\times K_i\rightarrow\pi|_Z^{-1}(C_i)$ denote
the definable homeomorphism coming from a trivialization.
As $K_i$ is homeomorphic to a fiber of $Z\rightarrow\pi(Z)$
it is compact. We will also use the fact that each $C_i$ is locally
closed. 

We may
assume $c\le 1/16$ and $c\le 1/N$.

Say $y_0\in\pi(Z)$ and $0<\epsilon\le c$. For each $1\le i\le N$ we
 choose  auxiliary points
\begin{equation}
\label{eq:chooseyi}
y_i\in C_i \text{ such that } |y_0-y_i|\le \epsilon
\end{equation}
if such an element exists. After renumbering, the $y_i$ will be the 
points in the assertion.

Let $p\in Z$ be as in the hypothesis and suppose $\pi(p)\in C_i$. The
$y_i$ as described above exists and we will prove that there is
$x\in Z_{y_i}$ such  that $|(y_i,x)-p|< \epsilon^{\delta}$.
We are in effect straightening-out the fiber containing the possible 
$p$. 

Observe that if $y_i = \pi(p)$, then we are allowed to choose $x$ with
$p=(y_i,x)$. So let us suppose $y_i\not=\pi(p)$. 
To simplify notation we write $C=C_i,y=y_i,\psi=\psi_i,$ and $K=K_i$. 

There is $z(p)\in K$ with
\begin{equation*}
\psi(\pi(p),z(p)) = p.
\end{equation*}
Recall that $y\not=\pi(p)$, so $\psi(y,z(p))\not=p$. 
The function $\psi:C\times K\rightarrow\IR^n$ takes values  in the 
 bounded set $Z$.
So we may apply lemma \ref{lem:hoelder}  to  $C\times K$ and
$\psi$. We obtain  $c_1>0$ and
 $\delta_{1,2}>0$ such that
\begin{equation*}
  \min\{\dists{ y,\fr{C}},\dists{\pi(p),\fr{C}}\}
\le c_1 \min\left\{1,\frac{| y-\pi(p)|^{\delta_1}}{|\psi(
  y,z(p))-p|^{\delta_2}}\right\};
\end{equation*}
observe that $(y,z(p))$ and $(\pi(p),z(p))$ both lie in $C\times K$
and $\psi(\pi(p),z(p))=p$.

We set $\delta_3 = \min\{1/2,\delta_1/4\}$ and split-up into 2 cases,
the first one being 
\begin{equation}
  \label{eq:straightcase2}
\dists{\pi(p),\fr{C}}\ge \epsilon^{\delta_3}.
\end{equation}

 We recall (\ref{eq:chooseyi}) and the hypothesis $|y_0 - \pi(p)|\le
 \epsilon$ to bound
\begin{equation*}
 | y-\pi(p)| \le | y-y_0| + |y_0- \pi(p)| \le 2\epsilon \le 2c \le 1.
\end{equation*}
So we have either
\begin{equation}
\label{eq:alt1}
  |\psi( y,z(p))-p| < | y-\pi(p)|^{\delta_1/(2\delta_2)}
\le (2\epsilon)^{\delta_1/(2\delta_2)}\le 1 
\end{equation}
or 
$  |\psi( y,z(p))-p| \ge| y-\pi(p)|^{\delta_1/(2\delta_2)}$ and thus
\begin{equation}
\label{eq:alt2}
  \min\{\dists{ y,\fr{C}},\dists{\pi(p),\fr{C}} \}\le c_1 
| y-\pi(p)|^{\delta_1/2} \le c_1 (2\epsilon)^{ \delta_1/2}.
\end{equation}

We can rule out this second possibility. Indeed, if $\dists{ y,\fr{C}} < \epsilon^{\delta_3}/2$, 
then 
\begin{equation*}
 \dists{\pi(p),\fr{C}} < \epsilon^{\delta_3}/2 + |y-\pi(p)|\le
 \epsilon^{\delta_3}/2 + 2 \epsilon.
\end{equation*}
By (\ref{eq:straightcase2}) we find
$\epsilon^{-(1-\delta_3)}<4$ which is a contradiction as $\epsilon\le c\le 1/16$ and $\delta_3\le 1/2$.
So we must have $\dists{ y,\fr{C}}\ge \epsilon^{\delta_3}/2$. 
By (\ref{eq:straightcase2}) the left-hand side of  (\ref{eq:alt2}) is 
at least $\epsilon^{\delta_3}/2$.
This is incompatible with   $0<\delta_3 \le \delta_1/4$ 
 for sufficiently small $c$. 

Therefore,   (\ref{eq:alt1}) holds true.
We may assume $\delta \le\delta_1/(4\delta_2)$,
hence $|\psi( y,z(p))-p| \le (2\epsilon)^{2\delta} <
\epsilon^{\delta}$ because $\epsilon\le c< 1/4$. 
We take $x$ as in $\psi(y,z(p)) = (y,x)$ and this yields the lemma.

The second case is
\begin{equation*}
  \dists{\pi(p),\fr{C}}< \epsilon^{\delta_3}.
\end{equation*}
Observe that $\pi|_Z^{-1}(C)=(C\times\IR^n)\cap Z$ is locally closed in $\IR^m\times\IR^n$
because $Z$ is closed.
Moreover, this preimage is bounded because $Z$ is. 
We can thus  apply
Lemma \ref{lem:borderbound} to $\pi|_Z^{-1}(C)$
and the continuous function $p' \mapsto
\dists{\pi(p'),\fr{C}}$. 
Observe that this function does not vanishes as $\fr{C}$ is closed in $\IR^m$. 
We set $Z' =\fr{\pi|_Z^{-1}(C)}$, so 
\begin{equation*}
  \dists{p,Z'} \le c_2 \dists{\pi(p),\fr{C}}^{\delta_4}
< c_2 \epsilon^{\delta_3\delta_4}.
\end{equation*}
If $c$ is small enough,   the left-hand side 
is strictly less than $1$. So there exists $p'\in Z'$
with $|p-p'|\le c_2  \epsilon^{\delta_3\delta_4}$. 

As in the Flexing Lemma the dimension drops $\dim Z' <
\dim \pi|_Z^{-1}(C)\le \dim Z$. 
The frontier $Z'$ is closed in $\IR^m\times\IR^n$
and definable as $\pi|_Z^{-1}(C)$ is locally closed and definable.
So $Z'$ is compact with  $Z'\subset Z$ because  $Z$ is compact.
Therefore, this lemma holds for the compact and definable set $Z'$ by induction on the
dimension and if $c$ is small enough.
We take as $y_0$ a fixed 
 projection $\pi(p')$ that occurs in this second case. 
We obtain a  point in $Z'$ near $p'$
that is in a bounded number of fibers. 
Both the proximity estimate and the bound on the number of fibers are
sufficient to conclude the lemma for $Z$ in this first case.
\end{proof}

We combine flexing and straightening to prove a 
\L ojasiewicz Inequality for families. 

We will again interpret $\IR^l$ as the vector space of tuples of
polynomials of  degree bounded by $d$.  

\begin{proposition}[\L ojasiewicz in families]
\label{prop:lojasiewicz}
Let $Z\subset\IR^l \times\IR^m \times\IR^n$ be   compact and definable. There exist $c=c(Z)\in (0,1]$ and a rational number
 $\delta=\delta(Z)>0$ with the following property. If
 $f\in\IR^l, y\in\IR^m,$ and $0<\epsilon \le c$, there are
 $(f_1,y_1),\ldots, (f_N,y_N) \in\IR^l\times\IR^m$ with $N\le c^{-1}$
such that for
 all $x\in Z_{(f,y)}$ with $|f(x)|\le\epsilon$  there is
 $i \in \{1,\ldots,N\}$ and $x'\in Z_{(f_i,y_i)}$ with $f_i(x')=0$ and 
$|(f_i,y_i,x')-(f,y,x)| < \epsilon^{\delta}$. 
\end{proposition}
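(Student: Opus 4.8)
The plan is to prove the proposition by induction on $\dim Z$, combining the Flexing Lemma, the Straightening Lemma, and the inductive hypothesis applied to a set of strictly smaller dimension. If $Z=\emptyset$ there is nothing to prove, so assume $Z\neq\emptyset$; the base case $\dim Z\le 0$ is subsumed below, since then the set $Z'$ produced by Flexing is empty. First I would apply the Flexing Lemma to $Z$, obtaining $c_F\in(0,1]$, a rational $\delta_F>0$, and a compact definable $Z'\subseteq\overline Z=Z$ (the last equality because $Z$ is compact) with $\dim Z'<\dim Z$, so that for all $f,y$ and all $x\in Z_{(f,y)}$ with $|f(x)|\le c_F$ one has either (a) $\dists{x,Z_{(f,y)}\cap\zeroset{f}}\le|f(x)|^{\delta_F}$ or (b) $\dists{(f,y,x),Z'}<|f(x)|^{\delta_F}$. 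Case (a) is handled at once: since $|f(x)|^{\delta_F}\le\epsilon^{\delta_F}<1$ and $\dists{x,\emptyset}=1$, the compact set $Z_{(f,y)}\cap\zeroset{f}$ is non-empty, and a nearest point $x'$ in it lies in $Z_{(f,y)}$, satisfies $f(x')=0$, and obeys $|x-x'|\le\epsilon^{\delta_F}<\epsilon^{\delta}$ provided $\delta<\delta_F$ and $\epsilon<1$; thus putting $(f,y)$ into the output list takes care of every $x$ in case (a).

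Case (b) is the heart of the matter. Here I would apply the Straightening Lemma to $Z'$, viewed as a family over $\IR^{l}\times\IR^{m}$, obtaining $c_S\in(0,1]$ and a rational $\delta_S>0$, and the inductive hypothesis to $Z'$ (valid since $\dim Z'<\dim Z$), obtaining $c_P\in(0,1]$ and a rational $\delta_P>0$. Fix $f,y$ and $0<\epsilon\le c$, and assume $Z'\neq\emptyset$; let $(g_0,w_0)\in\pi(Z')$ be nearest to $(f,y)$. For every $x$ in case (b) the Flexing witness $p'(x)\in Z'$ obeys $|(f,y,x)-p'(x)|<\epsilon^{\delta_F}$, hence $|\pi(p'(x))-(g_0,w_0)|<2\epsilon^{\delta_F}=:\eta$. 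Straightening, applied with base point $(g_0,w_0)$ and radius $\eta\le c_S$, then returns boundedly many model base points $(h_1,v_1),\dots,(h_{N'},v_{N'})\in\pi(Z')$ with $N'\le c_S^{-1}$, together with, for each such $x$, an index $j$ and a point $u^*=u^*(x)\in Z'_{(h_j,v_j)}$ satisfying $|(h_j,v_j,u^*)-(f,y,x)|\le\epsilon^{\delta_F}+\eta^{\delta_S}\le C_1\epsilon^{\delta_1}$ for a suitable $C_1=C_1(Z)$ and rational $0<\delta_1\le 1$ built from $\delta_F$ and $\delta_S$. Producing these finitely many $(h_j,v_j)$ is precisely what Straightening buys us.

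Because case (b) forces $f$ itself to lie within $1$ of the compact first-$l$-coordinate projection of $Z'$, all polynomials and points in play stay in one bounded region, on which evaluation of polynomials of degree $\le d$ is Lipschitz, say with constant $L=L(Z)$; hence $|h_j(u^*)|\le|f(x)|+L\,|(h_j,v_j,u^*)-(f,y,x)|\le C_2\epsilon^{\delta_1}=:\zeta$. Now $u^*\in Z'_{(h_j,v_j)}$ with $|h_j(u^*)|\le\zeta\le c_P$, so the inductive hypothesis at each $(h_j,v_j)$ yields at most $c_S^{-1}c_P^{-1}$ further base points $(h_{j,k},v_{j,k})\in\IR^{l}\times\IR^{m}$ and, for each $x$ in case (b), a point $x'\in Z'_{(h_{j,k},v_{j,k})}\subseteq Z_{(h_{j,k},v_{j,k})}$ (using $Z'\subseteq Z$) with $h_{j,k}(x')=0$ and $|(h_{j,k},v_{j,k},x')-(h_j,v_j,u^*)|<\zeta^{\delta_P}$; adding the three proximity bounds gives $|(h_{j,k},v_{j,k},x')-(f,y,x)|\le C_3\epsilon^{\delta_2}$ for a rational $\delta_2>0$ built from $\delta_F,\delta_S,\delta_P$. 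I would take the output list to be $(f,y)$ together with all the $(h_{j,k},v_{j,k})$, so that $N\le 1+c_S^{-1}c_P^{-1}$; then choosing $\delta$ rational with $0<\delta<\min\{\delta_F,\delta_2\}$ and shrinking $c$ so that $N\le c^{-1}$, that $2c^{\delta_F}\le c_S$ and $C_2c^{\delta_1}\le c_P$, and that $C_3c^{\delta_2-\delta}<1$ (whence $C_3\epsilon^{\delta_2}<\epsilon^{\delta}$ for $\epsilon\le c$) makes both (a) and (b) deliver an admissible $x'$, closing the induction.

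The step I expect to be the main obstacle — granting the two preceding lemmas — is organizing the recursion. In case (b) Flexing returns a point lying on a fiber of the lower-dimensional set $Z'$ over a base point that depends on $x$, so it cannot be fed straight into the inductive hypothesis; interposing the Straightening Lemma to collapse these to boundedly many model base points is what makes the induction go through. Everything after that is bookkeeping: propagating the constants and exponents through the three nested invocations, and exploiting Lipschitz continuity of polynomial evaluation on a bounded region (available because case (b) forces $f$ close to the compact set $Z'$) to carry the smallness of $|f(x)|$ past the perturbations introduced by Flexing and Straightening.
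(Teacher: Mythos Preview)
Your proposal is correct and follows essentially the same route as the paper's proof: induction on $\dim Z$, with the Flexing Lemma splitting into the easy case (a) handled by adding $(f,y)$ to the output, and case (b) where one fixes a single base point in $\pi(Z')$ close to $(f,y)$, invokes the Straightening Lemma to funnel all Flexing witnesses into boundedly many fibers $Z'_{(h_j,v_j)}$, bounds $|h_j(u^*)|$ via Lipschitz continuity of polynomial evaluation on the relevant bounded region, and then applies the inductive hypothesis to $Z'$ at each of these fibers. The only cosmetic differences are that the paper picks its base point from an actual Flexing witness rather than as the nearest point of $\pi(Z')$, and handles the finite case directly rather than by noting that Flexing then returns $Z'=\emptyset$; both choices are equivalent.
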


Before we proceed with the proof note that the $(f_i,y_i)$ in the
claim may depend on $(f,y)$, but their number is bounded uniformly. 

\begin{proof}
If $Z$ is finite
we take for the $(f_i,y_i)$ the elements in its projection to
$\IR^l\times\IR^m$
and $c$ small enough to ensure that $x\in Z_{(f,y)}$ and  $|f(x)|\le c$ entail $f(x)=0$. 

 We now assume  $\dim Z\ge
1$. We prove the proposition by induction on $\dim Z$
and suppose that it holds in all dimensions that are strictly less than $\dim Z$. 

Let $x\in Z_{(f,y)}$ with $|f(x)|\le\epsilon\le c$. 
The constants $c_{1,2,3}$ and $\delta_{1,2,3,4}$ below are positive and depend only  on $Z$ but not
on $f,y,x,c,\delta,$ or $\epsilon$. We will fix $c$ and $\delta$ during the
argument below. 

 Let $c_1\in(0,1], \delta_1>0$ be the constants and $Z'\subset
   \overline Z =Z$ the compact
and definable set from the Flexing Lemma applied to $Z$. We may assume
$c\le c_1\le 1/2$. 
According to the Flexing Lemma there are two
cases.

Suppose first $\dists{x,Z_{(f,y)}\cap\zeroset{f}} \le
|f(x)|^{\delta_1} \le \epsilon^{\delta_1} \le c^{\delta_1}< 1$.
Then 
$f(x')=0$ for some $x'\in Z_{(f,y)}$ whose distance to $x$ is at most
$|f(x)|^{\delta_1}\le \epsilon^{\delta_1}$.
In this case the proposition follows as we may assume that $(f,y)$ is among the
$(f_i,y_i)$ and  {$\delta< \delta_1$}.

The second case is when there exits  $(f_0,y_0,x_0)\in Z'$ 
with  distance at most $|f(x)|^{\delta_1}$ to $(f,y,x)\in Z$. 
In particular, $|(f_0,y_0)-(f,y)|\le \epsilon^{\delta_1}$. 
Let us assume that we have found $(f_0,y_0)$ in the projection of $Z'$
that satisfies this inequality.
It will serve as a base point for applying the Straightening
Lemma to $Z'$; we now forget about $x_0$ and $x$. 

Indeed,  suppose $x\in Z_{(f,y)}$ with $|f(x)|\le \epsilon$ is a new point
 that is not covered by the first case. 
Thus there is $(f',y',x')  \in Z'$ with $|(f',y',x')-(f,y,x)|\le
\epsilon^{\delta_1}$. Using the triangle inequality 
we find 
\begin{equation*}
 |(f',y')-(f_0,y_0)|\le  2\epsilon^{\delta_1}\le 2c^{\delta_1/2} \epsilon^{\delta_1/2}
\le\epsilon^{\delta_1/2}
\end{equation*}
 for $c$ sufficiently small. 
After further shrinking $c$ we may
  apply the Straightening Lemma to $Z', \epsilon^{\delta_1/2},$ and $(f_0,y_0)$ when considering
 $\IR^l\times\IR^m\times\IR^n = \IR^{l+m}\times\IR^n$ as
parametrized by $\IR^{l+m}$. 
So $(f',y',x')$ lies at distance strictly less than
$\epsilon^{\delta_1\delta_2/2}$ to a point  $(f_i,y_i,x'')$ in  one
of finitely many fibers $Z'_{(f_i,y_i)}$ of $Z'$ provided for by
Straightening Lemma.
We may assume that 
the number of fibers in question
is at most $c^{-1}$.

By the triangle inequality and the estimates above we get
\begin{alignat}1
\label{eq:distbound}
  |(f_i,y_i,x'')-(f,y,x)| &\le
|(f_i,y_i,x'')-(f',y',x')| + |(f',y',x')-(f,y,x)|\\
\nonumber
&\le \epsilon^{\delta_1\delta_2/2} + \epsilon^{\delta_1}\\
\nonumber
&\le 2 \epsilon^{\delta_3}
\end{alignat}
with $\delta_3 = \delta_1 \min \{1,\delta_2 /2 \}$. 
Now $f_i(x'') = (f_i-f)(x'')+f(x'')$, so 
$|f_i(x'')|\le c_2 \epsilon^{\delta_3} + |f(x'')|$ since $x''$ lies in the
projection of $Z'$ to $\IR^n$, a bounded
 set. Let $\delta_4=\min\{1,\delta_3\}$. By developing $f$ in a series
 around $x$ we find 
 \begin{alignat*}1
|f(x'')| = |f(x + x''-x)| \le  |f(x)| + c_3 |x''-x|\le \epsilon+2c_3 \epsilon^{\delta_3}
\le (1+2c_3)\epsilon^{\delta_4}
 \end{alignat*}
because $f$ lies in the projection of the bounded set $Z$ to $\IR^l$
and since $|x''-x|\le 2\epsilon^{\delta_3}$. Therefore,
$|f_i(x'')|\le (1+c_2+2c_3)  \epsilon^{\delta_4}$.
We may assume {$\delta\le \delta_4/2$}. 
If $c>0$ is sufficiently small, then $\epsilon\le c$ implies
$|f_i(x'')|\le \epsilon^{\delta}$.

Recall that $x'' \in Z'_{(f_i,y_i)}$ and $\dim Z'< \dim Z$. 
The proposition now follows by induction on the dimension combined
with (\ref{eq:distbound}). 
\end{proof}

\section{Construction of the Auxiliary Function}
\label{sec:auxiliary}

 Bombieri and Pila \cite{BombieriPila} and later Pila and Wilkie
 \cite{PilaWilkie}
 use 
the \textit{determinant method} to construct an auxiliary
function. 
Here we use a different approach introduced by Wilkie. He  presented it in his lecture
course at Manchester in 2013 \cite{W:rationalpoints}. 
It is related to the use of the Thue-Siegel
Lemma in transcendence theory. 
Our tool to construct the auxiliary function is Minkowksi's Lattice
Point Theorem.

As in the previous section we let $n\in\IN$ and recall that
$|\cdot|$ denotes the maximum norm on
$\IR^n$.
Below we also use $|\cdot|$ to denote 
the maximum norm of the coefficient vector attached to a
polynomial   in real coefficients and possibly more than one unknown. 
 Moreover, we write $\ell(x) = |x_1|+\cdots + |x_n|$ for
 $x=(x_1,\ldots,x_n)\in\IR^n$. 
For $i = (i_1,\ldots,i_n)\in \IN_0^n$  we set
$x^i = x_1^{i_1}\cdots x_n^{i_n}$ for elements $x_1,\ldots,x_n$ in any
given ring where $0^0$ is interpreted as $1$.
Suppose $k\in\IN$ and let $\phi:(0,1)^k\rightarrow\IR$ be a continuous
function for  which all partial derivatives up-to order $b$ exist. For any
$\alpha=(\alpha_1,\ldots,\alpha_k)\in\IN_0^k$ with $\ell(\alpha)\le b$
we define
\begin{equation*}
  \partial^\alpha \phi = \frac{\partial^{\alpha_1}}{\partial
    X_1^{\alpha_1}}
\cdots
\frac{\partial^{\alpha_k}}{\partial X_k^{\alpha_k}} \phi. 
\end{equation*}
We set $|\phi| = \sup_{x\in (0,1)^k} |\phi(x)|$, which is possibly
$+\infty$.

We begin with some elementary estimates. 

\begin{lemma}
\label{lem:estimate}
\begin{enumerate}
\item [(i)]
  Suppose $x,y\in\IR^n$ and $i\in\IN_0^n$, then 
$|x^{i}-y^i|\le \max\{1,|x-y|\}^{\ell(i)-1} (1+|x|)^{\ell(i)}|x-y|$. 
\item[(ii)] Let $k\in\IN,b\in\IN_0,$ and suppose
  $\phi:(0,1)^k\rightarrow \IR^n$ 
has coordinate functions that have
 continuous parital derivatives up-to order $b$ with
modulus bounded by a real number $B\ge 1$ on $(0,1)^k$. 
 If $i\in\IN_0^k$, then 
  $|\partial^\alpha(\phi^i)|\le B^{\ell(i)} \ell(i)^{\ell(\alpha)}$
for all  $\alpha\in\IN_0^k$ such that $\ell(\alpha)\le b$.  
\end{enumerate}
\end{lemma}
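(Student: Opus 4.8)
Both parts are elementary and I would prove them by reducing to one-variable telescoping identities. For part (i), the plan is to write $x^i - y^i$ as a telescoping sum over the coordinates: set $z^{(0)} = x$, $z^{(n)} = y$, and let $z^{(r)}$ agree with $y$ in the first $r$ coordinates and with $x$ in the remaining ones, so that $x^i - y^i = \sum_{r=1}^{n} \bigl((z^{(r-1)})^i - (z^{(r)})^i\bigr)$, where consecutive terms differ only in the $r$-th coordinate. Each difference then has the shape $(\text{monomial})\cdot(x_r^{i_r} - y_r^{i_r})$, and $x_r^{i_r} - y_r^{i_r} = (x_r - y_r)\sum_{j=0}^{i_r-1} x_r^j y_r^{i_r-1-j}$. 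Bounding $|x_r|,|y_r|$ by $\max\{|x|,|y|\}\le |x| + |x-y| \le (1+|x|)\max\{1,|x-y|\}$ and counting that there are at most $i_r$ terms in the inner sum and $\ell(i)$ terms overall (since $\sum_r i_r = \ell(i)$), one collects the powers to get exactly the stated bound $\max\{1,|x-y|\}^{\ell(i)-1}(1+|x|)^{\ell(i)}|x-y|$. The bookkeeping of exponents is the only thing to be careful about; the factor $\max\{1,|x-y|\}^{\ell(i)-1}$ rather than $\ell(i)$ comes from the fact that one factor of $|x-y|$ is pulled out front.

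For part (ii), the plan is induction on $\ell(i)$ using the product rule, together with the hypothesis that each coordinate function $\phi_s$ has $|\partial^\beta \phi_s| \le B$ for all $\beta$ with $\ell(\beta)\le b$. When $\ell(i) = 0$ the function $\phi^i$ is constant $1$ and the bound $B^0 \ell(i)^{\ell(\alpha)} \ge 1$ is immediate (with the convention $0^0 = 1$ when $\alpha = 0$, and the left side is $0$ otherwise). For the inductive step write $\phi^i = \phi_s \cdot \phi^{i'}$ where $i'$ is $i$ with the $s$-th entry decreased by one; then $\partial^\alpha(\phi^i) = \sum_{\alpha' \le \alpha} \binom{\alpha}{\alpha'} (\partial^{\alpha'}\phi_s)(\partial^{\alpha-\alpha'}\phi^{i'})$, and applying the hypothesis to $\partial^{\alpha'}\phi_s$ and the inductive bound to $\partial^{\alpha-\alpha'}\phi^{i'}$ gives $|\partial^\alpha(\phi^i)| \le B \cdot B^{\ell(i)-1} \sum_{\alpha'\le\alpha}\binom{\alpha}{\alpha'}\ell(i')^{\ell(\alpha-\alpha')} = B^{\ell(i)} (1 + \ell(i'))^{\ell(\alpha)} = B^{\ell(i)} \ell(i)^{\ell(\alpha)}$, where the middle equality is the multinomial/binomial theorem applied coordinatewise and $\ell(i') + 1 = \ell(i)$.

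I expect the main (mild) obstacle to be purely notational: keeping the multi-index Leibniz rule and the coordinatewise binomial expansion straight, and handling the degenerate cases ($\ell(i) = 0$, $\ell(\alpha) = 0$, $i_r = 0$, and the $0^0 = 1$ convention) so that the inequalities are literally true rather than just morally true. Neither part requires anything beyond the triangle inequality, the finite geometric-type identity $a^k - b^k = (a-b)\sum a^j b^{k-1-j}$, and the Leibniz rule, so once the exponent count is organized correctly the proof is a short routine verification.
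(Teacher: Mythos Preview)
Your plan for part (ii) is correct; it differs from the paper only in that you induct on $\ell(i)$ via the full multi-index Leibniz rule for a product of two factors, while the paper inducts on $\ell(\alpha)$ by peeling off one derivative from a product of $\ell(i)$ single coordinate functions. Both arguments work and yield the same constant.

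For part (i), however, there is a genuine gap. Carried out as you describe, the telescoping gives $\ell(i)$ terms, each bounded by $\bigl((1+|x|)\max\{1,|x-y|\}\bigr)^{\ell(i)-1}|x-y|$, and hence
\[
|x^i - y^i| \;\le\; \ell(i)\,(1+|x|)^{\ell(i)-1}\max\{1,|x-y|\}^{\ell(i)-1}|x-y|,
\]
not the stated bound with $(1+|x|)^{\ell(i)}$. The extra factor $\ell(i)$ cannot in general be traded for a further factor of $(1+|x|)$: take $x=0$ and any $\ell(i)\ge 2$. So your claim that one gets ``exactly the stated bound'' is incorrect; the telescoping decomposition loses the combinatorial structure needed for that constant.

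The paper instead writes $h=x-y$ and expands $y^i=(x-h)^i$ by the binomial theorem:
\[
x^i - y^i = -\sum_{0\neq j\le i} \binom{i_1}{j_1}\cdots\binom{i_n}{j_n}\,x^{i-j}(-h)^j,
\]
bounds $|(-h)^j|\le |h|\max\{1,|h|\}^{\ell(j)-1}$ for $j\neq 0$, and then resummes
\[
\sum_{j\le i}\binom{i_1}{j_1}\cdots\binom{i_n}{j_n}|x^{i-j}| \;=\; \prod_{k=1}^n(1+|x_k|)^{i_k}\;\le\;(1+|x|)^{\ell(i)}.
\]
Here the binomial coefficients, rather than a crude term count, reassemble into $(1+|x|)^{\ell(i)}$. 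You should replace the telescoping by this expansion to obtain the lemma as stated; alternatively your bound with $\ell(i)$ in place of one factor of $(1+|x|)$ is valid and would suffice for the later applications, but it does not prove the inequality actually asserted.
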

\begin{proof}
For the proof of (i) we write $h=x-y=(h_1,\ldots,h_n)$ and may
  assume $i\not=0$. 
The Binomial Theorem implies
  \begin{equation*}
x^{i}-y^{i} = x^i - (x-h)^i 
= -\sum_{\substack{0\le j_1\le i_1 , \ldots, 0\le j_n\le
    i_n \\ j = (j_1,\ldots,j_n)\not=0}}
{i_1 \choose j_1} \cdots {i_n \choose j_n}
x^{i-j}(-h)^j
  \end{equation*}
where $i=(i_1,\ldots,i_n)$. 
We observe that $|(-h)^j| = |h_1^{j_1}\cdots h_n^{j_n}|\le
|h|^{\ell(j)}\le 
|h| \max\{1,|h|\}^{\ell(j)-1}$
if $j\not=0$. 
Say $x=(x_1,\ldots,x_n)$, then the triangle inequality yields
\begin{alignat*}1
 |x^i-y^i|&\le  |h| \max\{1,|h|\}^{\ell(i)-1}
\sum_{\substack{0\le j_1\le i_1 , \ldots, 0\le j_n\le
    i_n \\ j = (j_1,\ldots,j_n)\not=0}}
{i_1 \choose j_1} \cdots {i_n \choose j_n}
|x^{i-j}| \\
&\le |h| \max\{1,|h|\}^{\ell(i)-1} 
\prod_{k=1}^n \left(\sum_{j_k=0}^{i_k} {i_k \choose j_k}
|x_k|^{i_k-j_k}\right) \\
&= |h|\max\{1,|h|\}^{\ell(i)-1}(1+|x_1|)^{i_1} \cdots(1+|x_n|)^{i_n}
\\
&\le |h|\max\{1,|h|\}^{\ell(i)-1} (1+|x|)^{\ell(i)}
\end{alignat*}
and thus part (i).

For the proof of (ii)
let $\phi_1,\ldots,\phi_d:(0,1)^k \rightarrow\IR$ be continuous
functions for which all partial derivatives up-to order $b$ exist
and are bounded in modulus by $1$. If $\alpha\in \IN_0^k$ with
$1\le \ell(\alpha)\le b$, then using
the Leibniz rule we find
\begin{equation*}
  \partial^\alpha (\phi_1\cdots\phi_d) = 
\sum_{i=1}^d \partial^{\alpha'}\left(\phi_1\cdots
\phi_{i-1}\frac{\partial\phi_i}{\partial X_j} \phi_{i+1}\cdots \phi_d\right)
\end{equation*}
for some $\alpha'\in\IN_0^k$ with $\ell(\alpha')=\ell(\alpha)-1$ if the $j$-th coefficient of
$\alpha$ is non-zero. By induction on $\ell(\alpha)$ we conclude 
$|\partial^{\alpha}(\phi_1\cdots\phi_d)|\le d^{\ell(\alpha)}$. 
The lemma follows after scaling $\phi$ and observing
$\phi^i = \phi_1\cdots\phi_d$ where $d=\ell(i)$ and the
$\phi_1,\ldots,\phi_d$ are certain coordinate functions of $\phi$. 
\end{proof}

The next lemma is a variant of Liouville's Inequality.
For  {$d\in\IN$} we write
\begin{equation*}
  D_n(d) = {n+d \choose n}
\end{equation*}
for the number of monomials in $n$ variables and with degree at most
$d$.

\begin{lemma}
\label{lem:liouville}
  Let $x\in\IR^n$ have algebraic coefficients.
If $f\in \IZ[X_1,\ldots,X_n]\ssm\{0\}$  has degree $d$ and if 
$f(x)\not=0$, then 
  $|f(x)| \ge \left(D_n(d) |f| H(x)^{d n}\right)^{-[\IQ(x):\IQ]}$.
\end{lemma}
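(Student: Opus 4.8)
The plan is to use the standard product formula argument from the theory of heights. Write $K = \IQ(x)$ and $D = [K:\IQ]$, where $x = (x_1,\dots,x_n)$ has coordinates in $K$. First I would reduce the problem to estimating the quantity $f(x)$ at all archimedean and non-archimedean places of $K$. Since $f(x)\in K\ssm\{0\}$, the product formula gives $\prod_v |f(x)|_v^{d_v} = 1$, where $v$ ranges over the places of $K$, the $|\cdot|_v$ are suitably normalized absolute values, and $d_v$ are the local degrees. Splitting off the contribution of a fixed archimedean place $v_0$ corresponding to the given real embedding, we get
\begin{equation*}
|f(x)| = |f(x)|_{v_0} \ge \prod_{v \neq v_0} |f(x)|_v^{-d_v/d_{v_0}}
\end{equation*}
up to normalization, so it suffices to bound $|f(x)|_v$ from above at every place $v$.

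Second, the key local estimate: for each place $v$ of $K$, bound $|f(x)|_v$ in terms of $|f|$ (the max of the absolute values of the integer coefficients of $f$), the number $D_n(d)$ of monomials, and $\max_i \max\{1,|x_i|_v\}$. At a non-archimedean place $v$, since the coefficients of $f$ are rational integers, $|f|_v \le 1$ for each coefficient, so $|f(x)|_v \le \max_i \max\{1,|x_i|_v\}^d$ by the ultrametric inequality. At an archimedean place $v$, one has $|f(x)|_v \le D_n(d)\,|f|\, \max_i\max\{1,|x_i|_v\}^d$ by the triangle inequality (there are at most $D_n(d)$ monomials, each coefficient is at most $|f|$ in absolute value, and each monomial evaluated has absolute value at most $\max_i\max\{1,|x_i|_v\}^d$). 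Taking the product over all $v\neq v_0$ with the appropriate exponents and using that $\prod_v \max\{1,|x_i|_v\}^{d_v} = H(x_i)^D \le H(x)^D$ by the definition of the Weil height recalled in Section \ref{sec:notation}, the factors of $\max_i\max\{1,|x_i|_v\}^d$ contribute at most $H(x)^{dD}$ for each of the $n$ coordinates, i.e. $H(x)^{dnD}$ overall; the archimedean factors $D_n(d)|f|$ contribute at most $(D_n(d)|f|)^D$ since the sum of the archimedean local degrees is $D$. Combining these yields $|f(x)| \ge \left(D_n(d)|f|H(x)^{dn}\right)^{-D}$.

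I expect the main point requiring care — though not a deep obstacle — is bookkeeping the normalizations of the absolute values and local degrees so that the product formula holds on the nose and the exponents $d_v/d_{v_0}$ combine correctly into the clean bound with exponent $-D = -[\IQ(x):\IQ]$; in particular one must be careful that the single removed place $v_0$ is archimedean (so that $|f(x)|_{v_0}$ is the ordinary real absolute value $|f(x)|$) and that the remaining archimedean places contribute total local degree at most $D$. A cleaner alternative, which I might prefer to write up, is to invoke directly the standard height inequality $H(f(x)) \le D_n(d)\,|f|\,H(x)^{dn}$ (a routine consequence of the properties in (\ref{eq:heightprops}) extended to $\IQbar^n$ together with $H(x^i)\le H(x)^{\ell(i)}$), and then apply the elementary fact that any nonzero algebraic number $\alpha$ satisfies $|\alpha|\ge H(\alpha)^{-[\IQ(\alpha):\IQ]}$, noting $[\IQ(f(x)):\IQ]\le [\IQ(x):\IQ]$. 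Either route is short; the product-formula version is the most self-contained.
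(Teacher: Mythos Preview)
Your proposal is correct and follows essentially the same product-formula argument as the paper: set $K=\IQ(x)$, invoke the product formula for $f(x)\in K^\times$, bound $|f(x)|_v$ at each non-archimedean place by $\prod_i \max\{1,|x_i|_v\}^d$ via the ultrametric inequality and at each archimedean embedding by $D_n(d)|f|\prod_i\max\{1,|\sigma(x_i)|\}^d$, then isolate the identity embedding and collect the remaining factors into $H(x)^{dn}$. The only cosmetic difference is that the paper phrases the archimedean contribution in terms of field embeddings $\sigma:K\to\IC$ rather than normalized places, and writes the product $\prod_i\max\{1,|x_i|_v\}^d$ where you write $\max_i\max\{1,|x_i|_v\}^d$; your alternative via $H(f(x))$ and the elementary Liouville bound is also fine but not what the paper does.
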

\begin{proof}
  Suppose $f(x)\not=0$ and set $K=\IQ(x)$. 
Any maximal ideal $v$ of the ring of integers of $K$
defines a non-Archimedean absolute value $|\cdot|_v$ on $K$ 
with $|p|_v=p^{-1}$ for the prime number $p$ contained in $v$. 
We write $d_v$ for the degree of the completion of $K$ with respect to
$v$ over the field of $p$-adic numbers. 
The product formula,
  cf. Chapter 1.4 \cite{BG},  implies
  \begin{equation*}
\prod_{\sigma:K\rightarrow \IC} |\sigma(f(x))|    \prod_{v} |f(x)|_v^{d_v}  =1
  \end{equation*}
where $\sigma$ runs over all field embeddings and $v$ over all 
maximal ideals as before.

Let  $x=(x_1,\ldots,x_n)$.
For a maximal ideal $v$, the 
ultrametric triangle inequality and the fact that $f$ has integral
coefficients gives
\begin{equation}
\label{eq:nonarchbound}
 |f(x)|_v\le \max\{1,|x_1|_v,\ldots,|x_n|_v\}^d 
\le \max\{1,|x_1|_v\}^d\cdots \max\{1,|x_n|_v\}^d.
\end{equation}

If $\sigma:K\rightarrow\IC$ is a field embedding, then 
\begin{alignat}1
\label{eq:archbound}
  |\sigma(f(x))| &\le D_n(d) |f|
  \max\{1,|\sigma(x_1)|,\ldots,|\sigma(x_n)|\}^d
\\ \nonumber
&\le D_n(d)|f| \max\{1,|\sigma(x_1)|\}^d\cdots
\max\{1,|\sigma(x_n)|\}^d.
\end{alignat}

We take the product of (\ref{eq:nonarchbound}) raised to the $d_v$-th
power over all $v$ and multiply it with the product over all (\ref{eq:archbound}) with
$\sigma$ not the identity. On applying the product formula  we get
\begin{equation*}
  1 \le \left(D_n(d) |f| H(x_1)^d \cdots H(x_n)^d \right)^{[K:\IQ]}|f(x)|,
\end{equation*}
as desired. 
\end{proof}

The key tool for constructing the auxiliary function is the following
``approximate Thue-Siegel Lemma'' which follows from
Minkowski's Lattice Point Theorem. 
We use $|\cdot|_2$ to denote the Euclidean norm on a power of $\IR$. 

\begin{lemma}
\label{lem:approxsiegel}
Let $M,N\in\IN$ with $M\le N$ and suppose $A\in\mat{M,N}{\IR}$ has
rows
$a_1,\ldots,a_M$ 
 with $|a_i|_2\ge 1$ for all $1\le i\le M$. 
We set $\Delta = |a_1|_2\cdots |a_M|_2$.
If $Q\ge 2\sqrt N \Delta^{1/N}$ there exists $f\in\IZ^N\ssm\{0\}$ with 
\begin{equation*}
|f|\le Q\quad\text{and}\quad
|Af| \le (2\sqrt N)^{N/M} Q^{1-N/M} \Delta^{1/M}.
\end{equation*}
\end{lemma}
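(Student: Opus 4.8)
The plan is to apply Minkowski's Lattice Point Theorem (the convex body theorem) to a suitable symmetric convex body in $\IR^N$ and to extract an integer point from it. First I would set up the region
\begin{equation*}
  \mathcal{C} = \left\{ f\in\IR^N : |f|\le Q \text{ and } |a_i \cdot f| \le R_i \text{ for } 1\le i\le M \right\},
\end{equation*}
where $R_1,\ldots,R_M>0$ are parameters to be chosen, $a_i\cdot f$ denoting the $i$-th coordinate of $Af$. This is a symmetric convex body, being an intersection of the cube $[-Q,Q]^N$ with $M$ symmetric slabs. To compute (a lower bound for) its volume I would change coordinates: complete $a_1,\ldots,a_M$ to a basis-like family. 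More concretely, since the $a_i$ need not be orthogonal, the clean approach is to bound the volume of $\mathcal{C}$ from below by the volume of the smaller body obtained by replacing the slab conditions with conditions relative to an orthonormal frame. Writing $P$ for the orthogonal projection onto the span $V=\langle a_1,\ldots,a_M\rangle$, the constraint $|Af|\le (R_1,\ldots,R_M)$ componentwise forces $Pf$ into a parallelepiped of volume (in $V$) at least $2^M R_1\cdots R_M / \Delta$, because the $M\times M$ Gram-type scaling is controlled by $\Delta = |a_1|_2\cdots|a_M|_2$ (the covolume of the lattice generated by the $a_i$ is at most $\Delta$ by Hadamard). On the orthogonal complement $V^\perp$ of dimension $N-M$, one retains the cube constraint, which contains a box of volume at least $(2Q/\sqrt N)^{N-M}$ after accounting for the tilt of coordinates.

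The key step is then the volume estimate: with the choices $R_i = R$ for all $i$, one gets
\begin{equation*}
  \mathrm{vol}(\mathcal{C}) \ge \frac{(2R)^M}{\Delta}\left(\frac{2Q}{\sqrt N}\right)^{N-M}.
\end{equation*}
Minkowski's theorem guarantees a nonzero $f\in\IZ^N\cap\mathcal{C}$ as soon as $\mathrm{vol}(\mathcal{C}) > 2^N$, i.e. as soon as
\begin{equation*}
  R^M Q^{N-M} > \Delta (\sqrt N)^{N-M}.
\end{equation*}
Setting $R = (\sqrt N)^{N/M} Q^{1-N/M}\Delta^{1/M}$ makes the left side equal the right side; to get strict inequality (or to be safe with the boundary) I would instead take $R = (2\sqrt N)^{N/M} Q^{1-N/M}\Delta^{1/M}$, which makes $\mathrm{vol}(\mathcal{C})\ge 2^N\cdot 2^{N-M}\cdot(\text{something}\ge 1)$, comfortably strict. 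One also must check the hypothesis $Q\ge 2\sqrt N\,\Delta^{1/N}$ is exactly what is needed to ensure $Q\ge R$, i.e. that the cube constraint $|f|\le Q$ is not vacuous relative to the slab constraints and that the body is genuinely $N$-dimensional; indeed $R\le Q$ is equivalent to $(2\sqrt N)^{N/M}\Delta^{1/M}\le Q^{N/M}$, i.e. $Q\ge 2\sqrt N\,\Delta^{1/N}$. The resulting $f$ satisfies $|f|\le Q$ and $|Af| = \max_i |a_i\cdot f| \le R = (2\sqrt N)^{N/M}Q^{1-N/M}\Delta^{1/M}$, as claimed.

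The main obstacle I expect is making the volume lower bound rigorous without hand-waving about the non-orthogonality of the $a_i$: one needs the clean inequality $\mathrm{vol}(\mathcal{C})\ge 2^M R_1\cdots R_M\Delta^{-1}\cdot(2Q/\sqrt N)^{N-M}$, and the honest way to see it is either (a) to apply a linear change of variables sending $a_1,\ldots,a_M$ to the first $M$ standard basis vectors, tracking that the determinant of the transformation is at most $\Delta$ by Hadamard's inequality while the image still contains a cube of side $2Q/\sqrt N$ in the last $N-M$ coordinates, or (b) to invoke directly a standard "successive slabs" volume bound. The factor $\sqrt N$ enters precisely because a cube of side $2Q$ contains a ball of radius $Q$, hence under an orthonormal change of coordinates still contains a cube of side $2Q/\sqrt N$. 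The assumption $|a_i|_2\ge 1$ is used to ensure $\Delta\ge 1$ so that the exponents behave and the change-of-variables estimate loses nothing spurious. Everything else — symmetry, convexity, the final reading-off of the bounds — is routine.
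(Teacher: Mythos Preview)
Your approach is different from the paper's, though both are viable routes to the same statement.

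The paper avoids the volume estimate entirely. It sets $\epsilon = (2\sqrt N)^{N/M}Q^{-N/M}\Delta^{1/M}\in(0,1]$, stacks $A$ on top of $\epsilon I_N$ to get an $(M+N)\times N$ matrix $\overline A$, and applies Minkowski to the rank-$N$ lattice $\Lambda=\overline A\,\IZ^N\subset\IR^{M+N}$ rather than to a convex body in $\IR^N$. The covolume of $\Lambda$ is $\det(\overline A^{\mathrm t}\overline A)^{1/2}$, which by Cauchy--Binet is a sum of squares of $N\times N$ minors of $\overline A$; Hadamard bounds each minor by $\Delta\,\epsilon^{N-M}$ since $\epsilon\le 1\le |a_i|_2$. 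Minkowski then gives $|\overline A f|_2$ small, and one reads off $|f|\le\epsilon^{-1}|\overline Af|_2$ and $|Af|\le|\overline Af|_2$ directly.

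Your approach is the more classical Siegel/Bombieri--Vaaler route: intersect the cube with slabs in $\IR^N$ and bound the volume below. This can be made to work, but the step you flag is genuinely delicate, and your sketch of it is not yet rigorous. In particular, the inclusion you implicitly use --- that the parallelepiped $\{|a_i\cdot f|\le R,\ |b_j\cdot f|\le Q/\sqrt N\}$ sits inside the cube $|f|_\infty\le Q$ --- fails when the $a_i$ are nearly dependent: the dual basis vectors $a_i^*$ can be arbitrarily long, so $|Pf|$ is not controlled by $|Af|_\infty\le R$ alone. One can repair this (e.g.\ by working with the Euclidean ball instead of the cube on the $V$-side, or by a more careful change of variables), but it is exactly this bookkeeping that the paper's augmented-matrix trick sidesteps: Cauchy--Binet plus Hadamard handles the linear algebra uniformly, with no case analysis for degenerate configurations of the rows.
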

\begin{proof}
We  set
 $\epsilon =  (2\sqrt{N})^{N/M} Q^{-N/M} \Delta^{1/M}$
which lies in $(0,1]$ by our hypothesis.
The columns of the $(M+N)\times N$ matrix $\overline A$ obtained by augmenting $A$ by  the $N\times N$
unit matrix scaled by $\epsilon$ are a basis of a lattice $\Lambda\subset\IR^{M+N}$ of
rank $N$. Observe that an orthogonal transformation of $\Lambda$ lies
in $\IR^N\times\{0\}$, which we here identify with $\IR^N$. 
By Minkowki's Lattice Point Theorem applied to this transformation
there exists $f\in\IZ^N\ssm\{0\}$ such that
\begin{equation*}
  |\overline A f|_2 \le {2 \det(\overline{A}^{\mathrm{t}}\overline{A})^{1/(2N)}}{\nu_N^{-1/N}}
\end{equation*}
where $\nu_N>0$ is the volume of the unit $N$-ball  in $\IR^N$ and
the determinant  is the  volume of $\Lambda$ squared. 
By the Cauchy-Binet Formula
this determinant  is the sum of the squares of the
determinants of all
 $N\times N$ submatrices of $\overline A$.
 Hadamard's inequality
implies that the absolute value of
each determinant  is at most $|a_1|_2\cdots |a_M|_2 \epsilon^{N-M}=\Delta \epsilon^{N-M}$   since
 $\epsilon \le 1 \le |a_i|_2$ for all $1\le i\le M$. Thus $\det({\overline A}^{\mathrm{t}} \overline A)\le {M+N \choose N}
\Delta^2 \epsilon ^{2(N-M)}\le 4^{N} \Delta^2
\epsilon^{2(N-M)}$ since $M\le N$.
Now $\nu_N = \pi^{N/2}/\Gamma(1+N/2)$ where
$\Gamma(\cdot)$ is the gamma function. 
The inequality $\log \Gamma(x) < (x-1/2)\log(x) -x  + \log(2\pi)/2 + 1 $
 is  well-known for all $x>1$, cf. Lemma 1
\cite{MincSathre}. An elementary calculation yields
$\nu_N^{1/N}\ge 2/\sqrt{N}$. We combine this with the estimates above
and obtain
$|\overline A f|_2 \le 2 \sqrt N \Delta^{1/N}\epsilon^{1-M/N}$. 
Now $|f|\le |f|_2 \le \epsilon^{-1} |\overline A f|_2$ and
$|Af|\le |Af|_2\le |\overline A f|_2$, hence 
\begin{equation*}
  |f|\le 2\sqrt N \Delta^{1/N}\epsilon^{-M/N} = Q
\end{equation*}
by our choice of $\epsilon$ and
$|Af|\le 2 \sqrt N \Delta^{1/N}\epsilon^{1-M/N} 
= \epsilon Q = (2\sqrt{N})^{N/M} Q^{1-N/M}\Delta^{1/M}$, as desired. 
\end{proof}

\begin{proposition}
\label{prop:determinant}
  Let $b,d,k,n,\degvar\in\IN,$ and suppose $D_n(d) \ge (\degvar+1)D_k(b)$. 
 Let $B\ge 1$. 
There exists a constant $c =
c(b,d,k,n,\degvar,B) \ge 1$ with the following property. 
 Suppose $\phi: (0,1)^k\rightarrow\IR^n$ is a map
whose coordinate functions have
 continuous parital derivatives up-to order $b+1$ with
modulus bounded by $B$ on $(0,1)^k$. 
For any real number $T\ge 1$
there exist  $N\in\IN$ with $N\le c T^{{(k+1)n\degvar}\frac{d}{b}}$
and polynomials $f_1,\ldots,f_N \in \IQ[X_1,\ldots,X_n]\ssm
\{0\}$ 
with $\deg f_j\le d$ and $|f_j|=1$ for all $j\in \{1,\ldots,N\}$ such
that the following holds true.
If
\begin{equation*}
  z\in (0,1)^k\text{ and }q\in \IQ^n(T,\degvar)\text{ such that }|\phi(z)-q|<
c^{-1}T^{-\frac{(k+1)n\degvar}{k}\frac{d(b+1)}{b}},   
\end{equation*}
then
$f_j(q) = 0$ and $|f_j(\phi(z))|\le c |\phi(z)-q|$ for some $j\in \{1,\ldots,N\}$. 
\end{proposition}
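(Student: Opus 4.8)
The plan is a Thue--Siegel/Liouville argument in the spirit of Wilkie's construction. Subdivide $(0,1)^k$ into $\le(\lceil T^\rho\rceil)^k\le cT^{k\rho}$ boxes of side $T^{-\rho}$, where
$\rho=\frac{(k+1)n\degvar d}{kb}$,
so that $cT^{k\rho}=cT^{(k+1)n\degvar d/b}$ is the asserted bound on $N$; on each box we produce one polynomial $f_j$. Enlarging $c$, we may assume throughout that $T$ exceeds any fixed constant, the statement being vacuous otherwise. Note also that $\phi$ takes values in $[-B,B]^n$, since in particular its $0$-th order derivatives are bounded by $B$.

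Fix a box with centre $z_0$. Identify a polynomial $g=\sum_{\ell(i)\le d}c_iX^i$ of degree $\le d$ with its coefficient vector in $\IR^{D_n(d)}$ and consider the linear map $g\mapsto Ag:=\bigl(T^{\rho(b-\ell(\alpha))}\,\partial^\alpha(g\circ\phi)(z_0)\bigr)_{\ell(\alpha)\le b}$, an $M\times N'$ real matrix with $M=D_k(b)$ and $N'=D_n(d)$. By Lemma~\ref{lem:estimate}(ii) the entries of $A$ are bounded by $B^dd^b$ times a power of $T^\rho$, so its rows have Euclidean norm at most a power of $T^\rho$ depending only on $b,d,k,n,B$; rescaling any row of norm $<1$ to a unit vector (which only strengthens the conclusion we use) we may assume every row norm is $\ge1$, whence $\Delta:=\prod_i|a_i|_2\le c\,T^{\rho S}$ with $S=\sum_{\ell(\alpha)\le b}(b-\ell(\alpha))$. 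The hypothesis $D_n(d)\ge(\degvar+1)D_k(b)$ says exactly that $N'/M\ge \degvar+1$. Applying Lemma~\ref{lem:approxsiegel} with $Q$ a small power of $T$ at (or just above) the permitted threshold $2\sqrt{N'}\Delta^{1/N'}$, we obtain $g\in\IZ^{N'}\ssm\{0\}$ with $|g|\le Q$ and $|Ag|\le c\,Q^{1-N'/M}T^{\rho S/M}$. Undoing the weights, summing the order-$b$ Taylor polynomial of $g\circ\phi$ at $z_0$ over the box using $|z-z_0|\le T^{-\rho}$ and that the average value of $\ell(\alpha)$ over $\{\ell(\alpha)\le b\}$ equals $\overline{\ell}:=\frac{kb}{k+1}$ (so $S/M-b=-\overline{\ell}$), and bounding the order-$(b+1)$ remainder by Lemma~\ref{lem:estimate}(ii) and the hypothesis on the $(b+1)$-st derivatives of $\phi$, we arrive at $|g(\phi(z))|\le c\,\bigl(Q^{1-N'/M}T^{-\rho\overline{\ell}}+|g|\,T^{-\rho(b+1)}\bigr)$ for all $z$ in the box. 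Set $f=g/|g|$, so $|f|=1$, $f$ and $g$ have the same zero set, and $f\circ\phi$ has a Lipschitz constant on $[-B,B]^n$ bounded only in terms of $b,d,k,n,B$.

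Now let $z$ lie in the box and $q\in\IQ^n(T,\degvar)$ satisfy $|\phi(z)-q|<c^{-1}T^{-\rho(b+1)}$; this is the hypothesis, as $\rho(b+1)=\frac{(k+1)n\degvar}{k}\cdot\frac{d(b+1)}{b}$. Then $|g(q)|=|g|\,|f(q)|\le|g|\bigl(|f(\phi(z))|+(\mathrm{Lip})|\phi(z)-q|\bigr)$ is at most, up to a constant, $Q^{1-N'/M}T^{-\rho\overline{\ell}}+|g|\,T^{-\rho(b+1)}+|g|\,|\phi(z)-q|$. If $g(q)\ne0$, Lemma~\ref{lem:liouville} with $H(q)\le T$ and $[\IQ(q):\IQ]\le\degvar$ gives $|g(q)|\ge(D_n(d)\,|g|\,T^{dn})^{-\degvar}$. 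The choice of $\rho$ makes $\rho\overline{\ell}=n\degvar d$, so the first error term is $Q^{1-N'/M}T^{-n\degvar d}$, matching the $T$-exponent of the Liouville bound and beaten by the negative power $Q^{1-N'/M}$ of $T$ (this is where $N'/M\ge\degvar+1$ enters); moreover $\rho(b+1)>n\degvar d$ and $|g|\le Q$ is only a small power of $T$, so the other two error terms are also beaten once $c$ is large. Hence $g(q)=0$, and then $|f(\phi(z))|=|f(\phi(z))-f(q)|\le c\,|\phi(z)-q|$. Collecting the normalised polynomials over all boxes yields $f_1,\dots,f_N$ with the stated properties.

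The main obstacle is the exponent bookkeeping: one must choose the box exponent $\rho$, the weights $T^{\rho(b-\ell(\alpha))}$ on the Taylor conditions, and the Thue--Siegel parameter $Q$ so that the box count, the applicability of Lemma~\ref{lem:approxsiegel}, and the comparison with the Liouville bound $(D_n(d)|g|T^{dn})^{-\degvar}$ all hold at once; the identity $\overline{\ell}=kb/(k+1)$ for the average order of a multi-index of order $\le b$ is exactly what pins $\rho$ down, and the case $D_n(d)=(\degvar+1)D_k(b)$ is the tight one. A secondary nuisance is the interaction of the normalisation $|f|=1$ (needed so that the constant $c$ in $|f(\phi(z))|\le c|\phi(z)-q|$ is genuinely independent of $T$) with the Liouville inequality, which needs integer coefficients: one tracks $|g|$ through both estimates so that the occurrences cancel. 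The rows of $A$ of small norm, and the reduction to $T$ large, are routine.
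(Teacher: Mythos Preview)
Your approach is essentially the paper's: subdivide into boxes of side $\sim T^{-\rho}$, apply the approximate Thue--Siegel lemma to the weighted Taylor conditions, and beat Liouville. The bookkeeping identity $\overline{\ell}=kb/(k+1)$ and the resulting choice $\rho\overline{\ell}=n\degvar d$ are exactly what the paper uses.

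There is, however, a genuine gap in the tight case $D_n(d)=(\degvar+1)D_k(b)$. Trace your comparison: the first error term contributes (up to constants) $Q^{1-N'/M}T^{-n\degvar d}$, and Liouville gives a lower bound $c_0\,|g|^{-\degvar}T^{-n\degvar d}$ with $c_0=D_n(d)^{-\degvar}$. Since you only know $|g|\le Q$, the worst case is $|g|=Q$, and you need $C\,Q^{1-N'/M+\degvar}<c_0$ where $C$ is the fixed constant coming from Lemma~\ref{lem:approxsiegel} and the derivative bounds. When $N'/M=\degvar+1$ exactly, the exponent of $Q$ is zero and you are left with $C<c_0$, a comparison of two explicit constants in which $C$ involves factors like $(2\sqrt{D_n(d)})^{\degvar+1}$ and $c_0=D_n(d)^{-\degvar}$; there is no reason this should hold, and taking $T$ large does not help. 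You note at the end that this case is ``the tight one'' but do not resolve it.

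The paper's fix is to take the box side to be $r=c'T^{-\rho}$ with a small auxiliary constant $c'\in(0,1]$, not $T^{-\rho}$. Then $\Delta$, $Q$, and the error terms all acquire powers of $c'$, and the final comparison becomes $D_n(d)^{-\degvar}c_6^{-1}\le r^{\sigma}Q^{\degvar}T^{dn\degvar}=(c')^{bk/(k+1)}$, which is independent of $T$ and can be violated by choosing $c'$ small. This is precisely the missing degree of freedom in your argument; with it inserted (and the bound on $N$ absorbing the resulting $(c')^{-k}$), your sketch becomes the paper's proof.

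A minor secondary point: your handling of rows of norm $<1$ by rescaling to unit vectors is a bit loose (rescaling \emph{increases} $\Delta$, so it worsens the bound from Lemma~\ref{lem:approxsiegel}, not strengthens it), but this is harmless since the rescaled $\Delta$ is still $\le c\,T^{\rho S}$. The paper avoids this by normalising each row to have Euclidean norm exactly $r^{-(b-\ell(\alpha))}$, so that $\Delta=r^{-S}$ on the nose.
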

\begin{proof}
  During the proof of this lemma we will increase $c$ several
  times. 
This constant shall not depend on $T$. Below, $c_1,\ldots,c_6$ are
positive constants that depend on $b,d,k,n,\degvar$ and $B$. We will choose
$c$ in function of these constants. 

For any $i\in \IN_0^n$ with $\ell(i)\le d$ we set $\phi_i(x) =
\phi(x)^i$ for all $x\in (0,1)^k$. We thus get a collection of
$D = D_n(d)$ functions $\phi_i:(0,1)^k\rightarrow\IR$ 
for which all derivatives exist and are continuous up-to order $b+1$. 

Say $T\ge 1$. We take 
\begin{equation}
\label{def:r}
 r =  c' T^{-\frac{(k+1)n\degvar}{k}\frac{d}{b}}\le c'\le 1
\end{equation}
  where $c'\in (0,1]$ is small enough in terms of
  $b,d,k,n,\degvar,B,$ and the $c_i$ and
   is to be
determined. 
Our choice of
$c$  is large enough in terms of $c'$. The hypercube $(0,1)^k$ is
contained in the union of 
\begin{equation}
  \label{eq:Nbound}
N\le  (1+r^{-1})^k\le 2^k r^{-k}
=2^k{c'}^{-k} T^{(k+1)n\degvar \frac{d}{b}}
\end{equation}
 closed hypercubes  of side length  $r$. 

Let $V\subset\IR^k$ be one of these closed hypercubes with $V\cap (0,1)^k\not=\emptyset$.
It will eventually lead to a single polynomial $f=f_V$ as in the
hypothesis. 
As we let $V$ vary over the hypercubes covering  $(0,1)^k$, we 
will get $N$ polynomials. After renumbering them, they will be the
$f_j$ claimed to exist in the assertion of this lemma. 
The estimate for $N$ in the assertion will follow from (\ref{eq:Nbound})
as we may assume  {$c \ge 2^kc'^{-k}$}.

Our approach is to find $D_n(d)$ coefficients $f_i\in\IZ$ for
a polynomial
$f=\sum_{\substack{\ell(i)\le d \\ i=(i_1,\ldots,i_n)}} f_i X_1^{i_1}\cdots X_n^{i_n}$ using Lemma \ref{lem:approxsiegel}.
We  develop the Taylor series of  $p(z) =
f(\phi_1(z),\ldots,\phi_n(z))$ around a fixed 
auxiliary point
$\overline{z}=(\overline{z}_1,\ldots,\overline{z}_k)\in V\cap (0,1)^k$
with Lagrange remainder term.
Indeed, for $z\in (0,1)^k$ we have
\begin{equation}
\label{eq:pztaylor}
  p(z)=\sum_{\substack{\alpha\in\IN_0^k
      \\ \ell(\alpha)\le b}} \left(
\sum_{\substack{i\in\IN_0^n \\ \ell(i)\le d}} f_i
\frac{\partial^\alpha
  \phi^i (\overline{z})}{\alpha!}\right)(z-\overline{z})^\alpha
+\sum_{\substack{\alpha\in\IN_0^k
      \\ \ell(\alpha)= b+1}} \left(
\sum_{\substack{i\in\IN_0^n \\ \ell(i)\le d}} f_i
\frac{\partial^\alpha \phi^i(\xi)}{\alpha!}\right)(z-\overline{z})^\alpha
\end{equation}
where $(\alpha_1,\ldots,\alpha_k)!=\alpha_1!\cdots \alpha_k!$
and where $\xi\in (0,1)^k$ lies on the line segment connecting $\overline z$ and
$z$. We now suppose $z\in V\cap (0,1)^k$, observe that $|z-\overline
z|\le r$. 
We must  find $f_i$ such that  
\begin{equation}
\label{eq:matrixrow}
\frac{r^{-(b-\ell(\alpha))} }{ \left|\left(\frac{\partial^\alpha
  \phi^i(\overline{z})}{\alpha!}\right)_{i\in\IN_0^n, \ell(i)\le d} \right|_2}
 \sum_{\substack{i\in\IN_0^n \\ \ell(i)\le d}} f_i
\frac{\partial^\alpha
  \phi^i (\overline{z})}{\alpha!}
\end{equation}
is small in absolute value for all $\alpha\in\IN_0^k$ with $\ell(\alpha)\le b$; the norm
in the denominator is, as usual, the Euclidean norm. 
The Euclidean norm of the coefficient vector in (\ref{eq:matrixrow})
with respect  to the
 $f_i$  is
$r^{-(b-\ell(\alpha))}\ge 1$.
We thus obtain a matrix  with real coefficients, $D_k(b)$
rows, and $D_n(d)$ columns.  
In order to apply 
 Lemma \ref{lem:approxsiegel} we need to estimate the product $\Delta$
of the Euclidean norms  of the rows of this matrix. This product equals
\begin{equation*}
\Delta 
= r^{-\sum_{\ell(\alpha)\le b} (b-\ell(\alpha))}
=r^{-\sum_{j=0}^b {k+j-1 \choose j} (b-j)}
\end{equation*}
as there are ${k+j-1 \choose j}$ derivatives of precise order $j$. 
We have 
\begin{equation*}
  \sum_{j=0}^b {k+j-1\choose j}(b-j) = \frac{b}{k+1} {b+k
    \choose b} = \frac{b}{k+1} D_k(b)
\end{equation*}
by basic properties of the bionomial coefficients and hence
\begin{equation}
\label{eq:Deltaformula}
  \Delta = r^{-\frac{b}{k+1} D_k(b)}.
\end{equation}

We define
\begin{equation}
\label{def:Q}
  Q = r^{-\frac{b+k+1}{(\degvar+1)(k+1)}} \ge 1.
\end{equation}
Let us verify that $Q$ satisfies the hypothesis of Lemma
\ref{lem:approxsiegel} applied to the $D_k(b)\times D_n(d)$ matrix
constructed above. Indeed,  (\ref{eq:Deltaformula}) implies the
first
 equality in 
\begin{equation*}
  \Delta^{\frac{1}{D_n(d)}} = r^{-\frac{b}{k+1} \frac{D_k(b)}{D_n(d)}}\le
 r^{-\frac{b}{(\degvar+1)(k+1)}} = r^{\frac{1}{\degvar  + 1}-\frac{b+k+1}{(\degvar
     +1)(k+1)}} =r^{\frac{1}{\degvar + 1}} Q
\end{equation*}
the inequality is due to $D_k(b)\le D_n(d)/(\degvar+1)$
and $r\le 1$.
As $r\le c'$ we find $2\sqrt{D_n(d)} \Delta^{1/D_n(d)}\le Q$ if
$c'\le (2\sqrt{D_n(d)})^{-(\degvar +1)}$, which we may assume. 

So there is   $f\in\IZ^{D_n(d)}\ssm\{0\}$ with $|f|\le Q$ such that
(\ref{eq:matrixrow}) is bounded from above in absolute value by
$c_1 Q^{1-D_n(d)/D_k(b)} \Delta^{1/D_k(b)}
\le c_1 Q^{-\degvar} \Delta^{1/D_k(b)}$, we  used $D_k(b)\le D_n(d)/(\degvar+1)$ again.

The terms up-to order $b$ in the Taylor expansion (\ref{eq:pztaylor}) can be bounded as
follows. For any $z\in (0,1)^k \cap V$ we have
\begin{alignat}1
\nonumber
\left|\underbrace{\sum_{\substack{\alpha\in\IN_0^k
      \\ \ell(\alpha)\le b}} \left(
\sum_{\substack{i\in\IN_0^n \\ \ell(i)\le d}} f_i
\frac{\partial^\alpha
  \phi^i (\overline{z})}{\alpha!}\right)(z-\overline{z})^\alpha}_{p_{\text{main}}}\right|
&\le 
\sum_{\substack{\alpha\in\IN_0^k
      \\ \ell(\alpha)\le b}} \left|
\sum_{\substack{i\in\IN_0^n \\ \ell(i)\le d}} f_i
\frac{\partial^\alpha
  \phi^i (\overline{z})}{\alpha!}\right|r^{\ell(\alpha)} \\
\label{eq:taylormainpart}
&\le 
c_1  Q^{-\degvar} \Delta^{1/D_k(b)}  r^{b}
\sum_{\substack{\alpha\in\IN_0^k
      \\ \ell(\alpha)\le b}}
\left(\sum_{\substack{i\in\IN_0^n \\ \ell(i)\le d}}\left(\frac{\partial^\alpha
  \phi^i(\overline{z})}{\alpha!}\right)^2\right)^{1/2},
\end{alignat}
keeping (\ref{eq:matrixrow}) in mind. 
Each Euclidean norm on the right is at most
$D_n(d)^{1/2} B^d d^b$
 by Lemma
\ref{lem:estimate}(ii). Therefore, 
$  |p_{\text{main}}|\le 
c_2  Q^{-\degvar} \Delta^{1/D_k(b)} r^{b}$ where
$c_2 = c_1 D_k(b) D_n(d)^{1/2}  B^d d^b$.
We insert (\ref{eq:Deltaformula}) and obtain
$|p_{\text{main}}|\le c_2 Q^{-\degvar}
r^{b\left(-\frac{1}{k+1}+1\right)} = c_2 Q^{-\degvar}
r^{\frac{bk}{k+1}}$. Next we substitute the expression for
$Q$ from (\ref{def:Q}) to get
\begin{equation}
\label{eq:pmain}
 |p_{\text{main}}|\le c_2 r^{\sigma} \quad\text{with}\quad\sigma= \degvar \frac{b+k+1}{(\degvar+1)(k+1)}+\frac{bk}{k+1}.
\end{equation}

The  remainder in the Taylor expansion (\ref{eq:pztaylor}) can be bounded as follows
\begin{alignat}1
\nonumber
\left|\underbrace{\sum_{\substack{\alpha\in\IN_0^k
      \\ \ell(\alpha)=b+1}} \left(
\sum_{\substack{i\in\IN_0^n \\ \ell(i)\le d}} f_i
\frac{\partial^\alpha
  \phi^i ({\xi})}{\alpha!}\right)(z-\overline{z})^\alpha}_{p_{\text{rem}}}\right|
&\le 
\sum_{\substack{\alpha\in\IN_0^k
      \\ \ell(\alpha)=b+1}} 
\sum_{\substack{i\in\IN_0^n \\ \ell(i)\le d}} 
\left|f_i \frac{\partial^\alpha
  \phi^i ({\xi})}{\alpha!}\right|r^{b+1} \\
\nonumber
&\le {k+b \choose k-1}D_n(d) B^d d^{b+1} |f| r^{b+1} 
\end{alignat}
where we used Lemma \ref{lem:estimate}(ii) again to bound the partial
derivatives of $\phi^i$ at $\xi\in (0,1)^k$. 
We obtain $  |p_{\text{rem}}|\le c_3 |f| r^{b+1}$
where $c_3 = {k+b\choose k-1}D_n(d) B^d d^{b+1}$. 
Observe that (\ref{def:Q}) and the choice of $\sigma$ in (\ref{eq:pmain}) imply
\begin{equation}
  \label{eq:Qrsigma}
Q r^{b+1} =  r^\sigma.
\end{equation}
We recall
$|f|\le Q$ and  find
$  |p_{\text{rem}}|\le c_3 Q r^{b+1}= c_3 r^{\sigma}$ 
with the same exponent as in (\ref{eq:pmain}).
Combining both bounds yields
\begin{equation}
\label{eq:pzbound}
|f(\phi(z))| =   |p(z)| \le |p_{\text{main}}|+|p_{\text{rem}}|
\le c_4 r^\sigma 
\end{equation}
with $c_4= c_2+c_3$.

Now suppose that $q\in(\IQbar\cap\IR)^n$ with $H(q)\le T$ and
$[\IQ(q):\IQ]\le \degvar$ satisfies
\begin{equation*}
  |\phi(z)-q|<  c^{-1} 
T^{-\frac{(k+1)n\degvar}{k}\frac{d(b+1)}{b}} = c^{-1}(r/c')^{b+1}
\end{equation*}
  where $z$ still lies in
$(0,1)^k\cap V$ and where we used (\ref{def:r}). 
We may suppose that $c^{-1}\le {c'}^{b+1}$, hence
$|\phi(z)-q|< r^{b+1} \le 1$.
We note that $f(\phi(z))-f(q)$ is the sum of $D_n(d)$ terms of
the form $f_i(\phi(z)^i-q^i)$ where $\ell(i)\le d$. 
By Lemma \ref{lem:estimate}(i)  we find
$|f(\phi(z))-f(q)|\le D_n(d) |f| (1+B)^d |\phi(z)-q|
\le c_5 Qr^{b+1}$ with $c_5 = D_n(d)(1+B)^d$ as $|f|\le Q$. Using   equality (\ref{eq:Qrsigma}) we
obtain
\begin{equation}
\label{eq:triangle}
  |f(q)-f(\phi(z))|\le c_5 r^{\sigma}.
\end{equation}
Together with  (\ref{eq:pzbound}) we get
\begin{equation}
\label{eq:fqubound}
  |f(q)| \le
|f(q)-f(\phi(z))|+|f(\phi(z))|
\le c_6 r^{\sigma} 
\end{equation}
where $c_6=c_4+c_5$. 

Suppose $f(q)\not=0$. Then we obtain $|f(q)|\ge \left(D_n(d) Q
T^{dn}\right)^{-\degvar}$ from Lemma \ref{lem:liouville}. 
We compare this inequality with (\ref{eq:fqubound}) and rearrange to
get
$D_n(d)^{-\degvar}c_6^{-1} \le r^\sigma Q^\degvar T^{dn\degvar}$. Using
(\ref{def:r}) and (\ref{def:Q}) we find, after a brief calculation, that 
$r^\sigma Q^\degvar T^{dn\degvar} ={c'}^{\frac{bk}{k+1}}$
is
independent of $T$. 
As the exponent  $bk/(k+1)$ of $c'$ is  positive, we arrive at a contradiction for $c'$
sufficiently small. 

So $f(q)=0$. We may replace $f$ by $f/|f|$ to normalize the polynomial.
This yields the first claim of the proposition as the number of $f$ is
bounded by (\ref{eq:Nbound}).

For the second and final claim we will bound $|f(\phi(z))|$ from above.
Now that we have $f(q)=0$ and $|f|=1$ we find as above (\ref{eq:triangle}) that 
\begin{equation*}
  |f(\phi(z))| = |f(\phi(z))-f(q)|  \le D_n(d)(1+B)^d |\phi(z)-q|.
\end{equation*}
We may assume $c \ge D_n(d)(1+B)^d$ and  from this we conclude
 the proof. 
\end{proof}

\section{\QACS}
\label{sec:qac}

In this section, cells are assumed to be definable in a fixed
o-minimal structure which we  do not require to be polynomially
bounded. 
Our ambient o-minimal structure   contains all
semi-algebraic sets which themselves
form an o-minimal structure.
We also often work with
semi-algebraic cells.

Let $n\in\IN$.
We recall the notion of a non-singular point in real
algebraic geometry, our reference is the book of Bochnak, Coste, and
Roy \cite{BCR:RAG}. 
A real algebraic set $A\subset\IR^n$ is the set of common zeros of a
finite number of polynomials in $\IR[X_1,\ldots,X_n]$. Let $0\le r \le
n$ be an integer.
A point $x\in A$ is called non-singular in dimension $r$, if there
exist polynomials $f_1,\ldots,f_{n-r}\in\IR[X_1,\ldots,X_n]$ that
vanish on $A$ and satisfy the rank condition 
\begin{equation*}
\mathrm{Rk} \left(  \frac{\partial f_i}{\partial X_j}(x)\right)_{\substack{1\le
i\le n-r \\ 1\le j\le n}} = n-r
\end{equation*}
and an open neighborhood $U$ of $x$ in $\IR^n$ such
that $A\cap U =  \zeroset{f_1,\ldots,f_{n-r}}\cap U$, see Proposition 3.3.10 \loccit{}
We let
 $\sing{A}$ denote the complement in $A$ of all $x\in A$
that are non-singular in dimension $\dim A$. 
The complement $A\ssm \sing{A}$ is called the non-singular locus of
$A$. It is open in $A$ with respect to the Euclidean and Zariski topologies.
By Proposition
3.3.14 \loccit, $\sing{A}$ is a real algebraic set with $\dim\sing{A}<\dim
A$. The dimension of a real semi-algebraic set as
in \cite{BCR:RAG} coincides with its dimension as a definable set in
an o-minimal structure.

We call a cell of dimension $r$  quasi-algebraic if it is an
open subset of 
 the non-singular locus of a  $r$-dimensional real
 algebraic set.

For example, a $0$-dimensional cell is a \qac{}. An $n$-dimensional cell
in $\IR^n$ is an open subset of $\IR^n$, so it is quasi-algebraic. 

\Qacs{} bare  similarities to Pila's
definable blocks. Indeed, as all cells are connected,
an $r$-dimensional 
\qac{}  is a definable block of dimension of dimension $r$ and degree
$d$ for some $d$ in the sense of Definition 3.4 \cite{Pila:AO}. 
Working with  cells provides advantages  in the
induction step presented in Section \ref{sec:induction} below.

\begin{lemma}
\label{lem:cellinregularset}
Let $C\subset\IR^n$ be a  definable set that is
homoeomorphic 
to an open subset of $\IR^r$, e.g. an $r$-dimensional cell, and 
 contained in a non-empty real algebraic set
$A\subset \zeroset{f_1,\ldots,f_M}$   where 
$f_1,\ldots,f_M \in \IR[X_1,\ldots,X_n]$. Suppose $\dim A = r$ and
\begin{equation}
\label{eq:rankatleastnminusr}
\mathrm{Rk} \left(  \frac{\partial f_i}{\partial X_j}(x)\right)_{\substack{1\le
i\le M \\ 1\le j\le n}} \ge n-r
\end{equation}
for all $x\in C$. Then $C\subset A\ssm\sing{A}$ and $C$ is open in
$A\ssm\sing{A}$. 
If in addition $C$ is an $r$-dimensional cell then it is a \qac. 
\end{lemma}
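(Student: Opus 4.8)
The plan is to work pointwise and show that every $x \in C$ is non-singular on $A$ in dimension $r$, which then gives $C \subset A \ssm \sing{A}$; the openness and the final clause will follow quickly afterwards. So fix $x \in C$. By the rank hypothesis \eqref{eq:rankatleastnminusr} there are $n-r$ of the polynomials $f_1, \dots, f_M$ — say, after reindexing, $f_1, \dots, f_{n-r}$ — whose Jacobian $\left(\frac{\partial f_i}{\partial X_j}(x)\right)_{1 \le i \le n-r,\, 1 \le j \le n}$ has rank exactly $n-r$. These $f_i$ vanish on $A$ (hence on $C$), so to verify the definition of non-singularity in dimension $r$ recalled just before the lemma, it remains only to produce an open neighborhood $U$ of $x$ in $\IR^n$ with $A \cap U = \zeroset{f_1, \dots, f_{n-r}} \cap U$.

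For this I would argue by dimension. Let $W = \zeroset{f_1, \dots, f_{n-r}}$, a real algebraic set containing $A$. Since the Jacobian of $(f_1, \dots, f_{n-r})$ has full rank $n-r$ at $x$, the Implicit Function Theorem shows that near $x$ the set $W$ is a real-analytic submanifold of dimension $r$; in particular $\dim W = r = \dim A$ in a neighborhood of $x$. Now $A \subset W$ with $\dim A = r = \dim W$. The subtlety is that $A$ could be a proper algebraic subset of $W$ of the same dimension, which in general would be possible — but here $C \subset A$ is homeomorphic to an open subset of $\IR^r$, so $\dim C = r$ and $C$ is "fat" inside $W$ near $x$. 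Concretely: shrinking to a neighborhood $U$ of $x$ on which $W \cap U$ is a connected $r$-dimensional analytic manifold, the set $C \cap U$ is an $r$-dimensional subset of it, hence has nonempty interior in the manifold $W \cap U$ (a definable subset of a definable manifold of the same dimension has nonempty interior, by the o-minimal dimension theory of Chapter 4 of \cite{D:oMin}). Since $A$ is Zariski-closed and contains the relatively-open-in-$W\cap U$ set $C \cap U$, and $W \cap U$ is (analytically) irreducible, $A$ must contain all of $W \cap U$; thus $A \cap U = W \cap U = \zeroset{f_1,\dots,f_{n-r}} \cap U$, as required. This shows $x$ is non-singular in dimension $r$, so $x \notin \sing{A}$; as $x \in C$ was arbitrary, $C \subset A \ssm \sing{A}$.

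For openness: $A \ssm \sing{A}$ is a real-analytic manifold of dimension $r$, and $C$ is a definable subset of it of full dimension $r$, so $C$ has nonempty interior in $A\ssm\sing A$; but more is needed — I want $C$ itself open. Here I would use that $C$ is homeomorphic to an open subset of $\IR^r$ together with invariance of domain: the inclusion $C \hookrightarrow A\ssm\sing A$ is a continuous injection between two $r$-manifolds (the manifold structure on $C$ coming from the homeomorphism with an open subset of $\IR^r$), hence open, so $C$ is open in $A \ssm \sing{A}$. Finally, if $C$ is in addition an $r$-dimensional cell, then by what we have just shown it is an open subset of the non-singular locus of the $r$-dimensional real algebraic set $A$, which is exactly the definition of a \qac{}.

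The main obstacle is the middle step — promoting "$C \subset A$, $\dim A = r$, Jacobian has rank $n-r$ on $C$" to the local identity $A \cap U = \zeroset{f_1,\dots,f_{n-r}} \cap U$. The content there is ruling out that $A$ is a lower-"thickness" algebraic subset of the manifold cut out by the $f_i$; this is handled by combining the Implicit Function Theorem with the fact (from o-minimal dimension theory) that $C$, being homeomorphic to an open subset of $\IR^r$, genuinely has dimension $r$ and therefore is somewhere relatively open in that manifold, forcing the Zariski-closed $A$ to absorb an entire connected analytic piece of it. Everything else is bookkeeping with the definitions recalled in this section.
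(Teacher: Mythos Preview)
Your proof is correct and follows the same overall plan as the paper's: pick $n-r$ of the $f_i$ with full-rank Jacobian at $x$, use the Implicit Function Theorem to see their common zero locus $W$ is locally an $r$-manifold, and then argue that $A$ must locally coincide with $W$. The one substantive difference is in that last step. You get nonempty interior of $C\cap U$ in $W\cap U$ via o-minimal dimension and then invoke the identity principle for real-analytic functions on the connected manifold $W\cap U$ to force $A\supset W\cap U$; afterwards you apply invariance of domain separately to $C\hookrightarrow A\ssm\sing A$ for openness. The paper instead applies invariance of domain \emph{already at this stage}, to the inclusion $C\cap U\hookrightarrow B\cap U$ (with $B=\zeroset{f_1,\dots,f_{n-r}}$), obtaining that $C\cap U$ is genuinely open in $B\cap U$, say $C\cap U=B\cap U\cap V$; the sandwich $C\subset A\subset B$ then gives $A\cap(U\cap V)=B\cap(U\cap V)$ immediately, with no appeal to analytic continuation, and the same open set $B\cap U\cap V\subset C$ witnesses openness of $C$ in $A$. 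So the paper's route is slightly more economical---one invocation of invariance of domain handles both non-singularity and openness---while yours trades that for an identity-principle argument. Both are fine; one small slip in your write-up is that you say ``the relatively-open-in-$W\cap U$ set $C\cap U$'' when you have only shown it has nonempty interior, but of course its interior is what you feed into the identity principle.
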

\begin{proof}
We may assume $r\ge 1$ and $C\not=\emptyset$. Say $x\in C$. The
jacobian matrix
$\left(\partial f_i/\partial X_j (x)\right)_{i,j}$
contains an invertible $(n-r)\times(n-r)$ submatrix. After permuting
coordinates and the $f_i$ we may suppose
\begin{equation*}
\det\left(\frac{\partial f_i}{\partial X_j}(x)\right)_{1\le i,j\le
n-r}\not=0. 
\end{equation*}

Let us define $B=\zeroset{f_1,\dots,f_{n-r}}$, it contains $A$ and $C$. 
By the implicit function theorem, cf. Corollary 2.9.8 \cite{BCR:RAG}, there is an open neighborhood $U$
of $x$ in $\IR^n$ such that $B\cap U$ is homeomorphic to an open
subset of $\IR^r$. By hypothesis,  $C$ is also homeomorphic to an open
subset of $\IR^r$. Observe that $x\in C\cap U\subset B\cap U$. By
invariance of domain, $C\cap U$ is open in $B\cap U$, i.e. 
$C\cap U = B\cap U \cap V$ for an open subset $V\subset\IR^n$.
Recall that $C\subset A\subset B$, 
so $C\cap U \cap V = A\cap U\cap V = B\cap U\cap V$. Therefore, $x\in A$ is non-singular
in dimension $r = \dim A$ and thus
$x\in A\ssm\sing{A}$. Moreover, $x$ lies in $B\cap U\cap V$
which is open in $A$ and contained in $C$. We find that $C$ is open in
$A$ by taking the union of all
$B\cap U \cap V$ as $x$ runs through the
points of $C$.
\end{proof}

If $0\le r\le n-1$ we write $\cJ_{n,r}$ for the set
of subsets $J\subset\{1,\ldots,n\}$ with $\#J=r+1$. 

\begin{lemma}
\label{lem:singulardim}
Suppose that for each  $J\in \cJ_{n,r}$
we are given an irreducible $f_J \in \IR[X_1,\ldots,X_n]$ 
with $\deg_{X_j}(f_J) = 0$ for all $j\in \{1,\ldots,n\}\ssm J$. Then
the set of all $x\in \zeroset{f_J : J \in \cJ_{n,r}}$ 
with 
\begin{equation*}
\mathrm{Rk} \left(  \frac{\partial f_J}{\partial
X_j}(x)\right)_{\substack{
J \in \cJ_{n,r} \\ 1\le j\le n}} < n-r
\end{equation*}
is real algebraic of dimesion at most $r-1$. 
\end{lemma}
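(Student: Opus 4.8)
\emph{Reduction to components.} Write $S$ for the set in the statement. It is clearly real algebraic: it is the intersection of the zero sets of the $f_J$ ($J\in\cJ_{n,r}$) with the zero sets of all $(n-r)\times(n-r)$ minors of the polynomial matrix $M=\left(\partial f_J/\partial X_j\right)_{J\in\cJ_{n,r},\,1\le j\le n}$. Since a real algebraic set is a finite union of irreducible real algebraic sets and its dimension is the maximum of theirs, it suffices to show that every irreducible component $V$ of $S$ has $\dim V\le r-1$.

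\emph{Choosing a good projection.} Fix such a $V$ and assume for contradiction that $\dim V\ge r$. A real algebraic (indeed semialgebraic) set of dimension $\ge r$ projects onto a full-dimensional subset of some coordinate $r$-plane; so there is an $r$-element subset $I\subset\{1,\dots,n\}$ for which $p_I\colon\IR^n\to\IR^I$ satisfies $\dim p_I(V)=r$, equivalently $p_I(V)$ contains a non-empty open subset of $\IR^I$ (this is the only use of the hypothesis $\dim V\ge r$). For $k\in\{1,\dots,n\}\ssm I$ put $J_k=I\cup\{k\}\in\cJ_{n,r}$, so $f_{J_k}$ vanishes on $V\subset\zeroset{f_{J_k}}$ and involves only the variables indexed by $J_k$. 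I would then record two facts, each obtained by contradicting $\dim p_I(V)=r$: \emph{(a)} $f_{J_k}$ has positive degree in $X_k$ --- otherwise $f_{J_k}$ is a non-constant polynomial in the variables indexed by $I$ that vanishes on $V$, so $p_I(V)$ lies in a proper algebraic subset of $\IR^I$; \emph{(b)} $g_k:=\partial f_{J_k}/\partial X_k$ does not vanish identically on $V$ --- by \emph{(a)} the polynomial $g_k$ is non-zero of $X_k$-degree strictly less than that of $f_{J_k}$, hence $f_{J_k}\nmid g_k$, and since $f_{J_k}$ is irreducible this makes $f_{J_k}$ and $g_k$ coprime in $\IR[X_j:j\in J_k]$ and therefore also over $\IC$; two coprime polynomials in $r+1$ variables define a complex variety of codimension $\ge2$, so $\zeroset{f_{J_k}}\cap\zeroset{g_k}\subset\IR^{J_k}\cong\IR^{r+1}$ has dimension $\le r-1$, and if $g_k$ vanished on $V$ then $\dim p_I(V)\le\dim p_{J_k}(V)\le r-1$, a contradiction.

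\emph{Conclusion.} As $V$ is irreducible and no $g_k$ ($k\notin I$) vanishes on all of $V$, there is a point $x_0\in V\ssm\bigcup_{k\notin I}\zeroset{g_k}$, so $\partial f_{J_k}/\partial X_k(x_0)\ne0$ for every $k\notin I$. The $n-r$ rows of $M(x_0)$ indexed by the sets $J_k$, $k\notin I$, are linearly independent: in a relation $\sum_{k\notin I}\lambda_k\,df_{J_k}(x_0)=0$, only the term $k=k'$ contributes to the coefficient of $dX_{k'}$ (because $k'\in J_k=I\cup\{k\}$ forces $k'=k$ when $k'\notin I$), and that coefficient equals $\lambda_{k'}\,\partial f_{J_{k'}}/\partial X_{k'}(x_0)\ne0$, so every $\lambda_{k'}$ vanishes. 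Hence $\mathrm{Rk}\,M(x_0)\ge n-r$, so $x_0\notin S$, contradicting $x_0\in V\subset S$. Thus $\dim V\le r-1$ for every component, and $\dim S\le r-1$.

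\emph{Where the difficulty lies.} The one genuinely delicate point is the choice of $I$: taking it so that $p_I(V)$ is full-dimensional is precisely what rules out the degenerate possibilities (that some $f_{J_k}$ does not involve $X_k$, or that $f_{J_k}$ degenerates along $V$ so that $\partial f_{J_k}/\partial X_k$ is identically zero on $V$) which would otherwise wreck the linear-independence step. The remaining ingredients --- that a real algebraic set of dimension $\ge r$ has an $r$-dimensional coordinate projection, that $\IR$-coprimality of polynomials persists over $\IC$, and that the real points of a complex variety have real dimension at most the complex dimension --- are standard and I would simply cite them.
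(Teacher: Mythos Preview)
Your proof is correct and rests on the same key observation as the paper's: for a fixed $r$-element subset $I$, the $(n-r)\times(n-r)$ submatrix of the Jacobian with rows indexed by $J_k=I\cup\{k\}$ ($k\notin I$) and columns indexed by $k\notin I$ is diagonal with entries $\partial f_{J_k}/\partial X_k$. The packaging, however, differs. The paper argues directly, without contradiction or irreducible components: for \emph{every} $r$-subset $J$ and every $x\in S$ the diagonal submatrix above is singular, so some $(\partial f_{J\cup\{i\}}/\partial X_i)(x)$ vanishes; irreducibility of $f_{J\cup\{i\}}$ then yields a nonzero polynomial in the $J$-variables alone (the $X_i$-resultant of $f_{J\cup\{i\}}$ and its $X_i$-derivative when $\deg_{X_i}f_{J\cup\{i\}}\ge 1$, or $f_{J\cup\{i\}}$ itself otherwise) that vanishes at $x$, so every coordinate $r$-projection of $S$ lands in a proper real algebraic subset of $\IR^r$ and hence $\dim S\le r-1$. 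Your route instead passes to an irreducible component, picks one good $I$, and replaces the explicit resultant by a coprimality-over-$\IC$ argument bounding the real dimension of $\{f_{J_k}=g_k=0\}$. The paper's version is slightly more elementary (no irreducible decomposition, no detour through $\IC$); yours makes the role of the irreducibility hypothesis---namely ensuring $\gcd(f_{J_k},\partial f_{J_k}/\partial X_k)=1$---somewhat more transparent.
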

\begin{proof}
Let $x$ be as in the hypothesis and let $J\subset \{1,\ldots,n\}$ have
cardinality $r$. For any $i\in \{1,\ldots,n\}\ssm J$ we write
$g_i = f_{J\cup \{i\}}$. 
The $(n-r)\times(n-r)$ diagonal matrix
$\bigl((\partial g_i / \partial X_j) (x)\bigr)_{i,j}$, where
 $i,j\in \{1,\ldots,n\} \ssm J$, is singular by hypothesis. 
So $(\partial g_i / \partial X_i) (x)=0$ for some 
$i$.
The polynomial $g_i$ is irreducible by hypothesis. If $\deg_{X_i}
g_i\ge 1$, then the resultant of $g_i$ and $\partial g_i/\partial
X_i$, taken as polynomials in $X_i$, is a non-zero polynomial 
$h \in \IR[X_j : j \in J]$.  If $\deg_{X_i} g_i = 0$ we set $h =
g_i\not=0$ which only depends on the coordinates in $J$. Observe that
$h(x)=0$ in both cases. 

We have proved that if $x$ is as in the hypothesis, then its
projection to any choice of $r$ coordinates of $\IR^n$ indexed by $J$ lies in the
vanishing locus of a non-zero polynomial in $r$ variables. Therefore, the set of $x$ in
question has dimension at most $r-1$. It is clearly a real algebraic
set. 
\end{proof}

\begin{lemma}
\label{lem:Dalg}
Let $D\subset\IR^n$ be  a connected, definable, 
 open subset of a  real semi-algebraic set. If $\dim D\ge 1$ then $\alg{D}=D$. 
\end{lemma}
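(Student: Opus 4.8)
The plan is to show that every point of $D$ lies on a connected real semi-algebraic subset of $D$ of positive dimension; since $\alg D$ is by definition the union of all such sets, this gives $\alg D = D$. Recall that $D$ is a connected definable set that is an open subset of some real semi-algebraic set $A \subset \IR^n$; we may replace $A$ by its Zariski closure and assume $A$ is real algebraic, and after passing to the irreducible component through a given point we may assume $\dim A = \dim D =: r \ge 1$. Fix $x_0 \in D$. Since $D$ is open in $A$, there is an open neighborhood $U$ of $x_0$ in $\IR^n$ with $A \cap U \subset D$.

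First I would reduce to producing, through $x_0$, a connected semi-algebraic curve (a $1$-dimensional set) contained in $A \cap U$, which then lies in $D$ and has positive dimension, finishing the proof. To get such a curve, I would use the fact that $\dim A = r \ge 1$ together with the curve selection lemma, or more elementarily the following argument: $A \cap U$ is a semi-algebraic set of dimension $r \ge 1$ (it is open in $A$ and nonempty, so it has the same dimension as $A$ locally). A nonempty semi-algebraic set of dimension $\ge 1$ cannot be a finite set of points, so $A \cap U$ is infinite; being semi-algebraic it has finitely many connected components, hence some component is infinite. An infinite connected semi-algebraic set in $\IR^n$ is not zero-dimensional, so it has dimension $\ge 1$. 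Taking a semi-algebraic curve inside it (e.g.\ intersect with a generic affine subspace of the right codimension, or invoke that a positive-dimensional semi-algebraic set contains a semi-algebraic arc) we obtain a connected semi-algebraic set $S \subset A \cap U \subset D$ with $\dim S \ge 1$. Then $S \subset \alg D$, and since $x_0 \in A \cap U$ we can arrange $x_0 \in S$, or at least cover $D$ by such sets.

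Actually the cleanest route is to avoid choosing curves through each point: since $D$ is open in $A$, for each $x \in D$ we have a neighborhood $U_x$ with $A \cap U_x \subset D$, and $A \cap U_x$ is a connected (after shrinking $U_x$ to a ball, using that $A$ is locally connected at points where $\dim A \ge 1$ — or simply passing to the connected component of $x$ in $A \cap U_x$) semi-algebraic set of positive dimension. Hence $x \in A \cap U_x \subset \alg D$. Taking the union over all $x \in D$ yields $D \subset \alg D$; the reverse inclusion $\alg D \subset D$ is immediate from the definition of $\alg D$ as a union of subsets of $D$. Therefore $\alg D = D$.

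The main obstacle is the bookkeeping around dimension and connectedness: one must be careful that $A \cap U_x$ (or the relevant connected component containing $x$) genuinely has positive dimension — this uses that $D$ is open in $A$, so the local dimension of $A$ at each point of $D$ equals $\dim D \ge 1$ — and that it is semi-algebraic, which is clear since $A$ is semi-algebraic and $U_x$ may be taken to be an open ball (a semi-algebraic set). Invariance of domain is not needed here; the only nontrivial input is the basic semi-algebraic fact that a nonempty semi-algebraic set that is not a finite union of points has dimension at least $1$, together with finiteness of connected components of semi-algebraic sets.
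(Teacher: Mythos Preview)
Your ``cleanest route'' is exactly the paper's proof: for $x\in D$ take an open $U$ with $D\cap U = A\cap U$ semi-algebraic, pass to the connected component of $x$ (which is semi-algebraic and open in $D$), and conclude it lies in $\alg D$. One small correction: your justification that this component has positive dimension---``the local dimension of $A$ at each point of $D$ equals $\dim D$''---can fail (think of a line glued to a plane at a point), but the conclusion is saved by the connectedness of $D$, which is precisely how the paper argues it: if the component were a singleton it would be clopen in $D$, forcing $D$ to be a point.
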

\begin{proof}
Say $x\in D$, 
by hypothesis there is an open subset $U\subset\IR^n$
containing $x$ such that $D\cap U$ is semi-algebraic. 
All connected components of $D\cap U$ are semi-algebraic and
 open
in $D\cap U$. So we may suppose that $D\cap U$ contains $x$, is connected,
semi-algebraic, and open in $D$. Now $D\cap U$ cannot be a singleton
since
 $D$ is connected and of positive dimension. So it has positive
dimension and
 $D\cap U \subset \alg{D}$. We conclude $D = \alg{D}$. 
\end{proof}

Let $C\subset\IR^n$ be an $(i_1,\ldots,i_n)$-cell of dimension $r\ge
0$, cf. Section 3.2 \cite{D:oMin} for this terminology. Suppose
$1\le \lambda_1<\lambda_2<\cdots < \lambda_r \le n$ are precisely
those
indices with $i_\lambda = 1$. Let $p:\IR^n\rightarrow \IR^r$
denote the projection onto the coordinates
$\lambda_1,\ldots,\lambda_r$. Then $p|_C$ is injective.

\begin{lemma} 
\label{lem:pullbackcell}
In the notation above suppose $D\subset \IR^r$ is a cell
with $D\subset p(C)$. 
Then $p|_C^{-1}(D)$ is a cell. 
\end{lemma}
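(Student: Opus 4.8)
The plan is to induct on the ambient dimension $n$, peeling off the last coordinate. Write $\pi\colon\IR^n\to\IR^{n-1}$ for the projection onto the first $n-1$ coordinates; by the recursive definition of cells (Chapter~3 of \cite{D:oMin}), $C':=\pi(C)$ is an $(i_1,\ldots,i_{n-1})$-cell, and $C$ is either the graph $\Gamma(f)$ of a continuous definable $f\colon C'\to\IR$ (if $i_n=0$) or a band $(g,h)_{C'}=\{(x',t):x'\in C',\ g(x')<t<h(x')\}$ for continuous definable $g,h\colon C'\to\IR\cup\{\pm\infty\}$ with $g<h$ (if $i_n=1$). The base case $n=1$ is immediate: a $0$-cell has $r=0$ and $D$ is forced to equal $p(C)$, while a $1$-cell is an interval, $p$ is the identity, and $D$ is a subinterval. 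Throughout the induction step I will use, applied to $C'$, the fact recalled just before the lemma: the projection of a cell onto its free coordinates is injective.

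\emph{Case $i_n=0$.} Then $\lambda_r\le n-1$, so $\lambda_1,\ldots,\lambda_r$ are exactly the free coordinates of $C'$; let $p'\colon\IR^{n-1}\to\IR^r$ be the projection onto them, so $p=p'\circ\pi$. As $\pi|_C\colon C\to C'$ is a homeomorphism, $p(C)=p'(C')$, hence $D\subset p'(C')$; by the inductive hypothesis $C'':=(p'|_{C'})^{-1}(D)$ is a cell with $C''\subset C'$. Then $p|_C^{-1}(D)=(\pi|_C)^{-1}(C'')=\Gamma(f|_{C''})$ is the graph of a continuous definable function over the cell $C''$, hence a cell.

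\emph{Case $i_n=1$.} Here $\lambda_r=n$, the free coordinates of $C'$ are $\lambda_1,\ldots,\lambda_{r-1}$, and $\dim C'=r-1$; let $p'\colon\IR^{n-1}\to\IR^{r-1}$ be the projection onto them, so that $p(x',x_n)=(p'(x'),x_n)$, and let $\rho\colon\IR^r\to\IR^{r-1}$ be the projection onto the first $r-1$ coordinates. Then $D':=\rho(D)$ is a cell (projection of a cell) and $D'\subset\rho(p(C))=p'(C')$, so the inductive hypothesis gives an $(i_1,\ldots,i_{n-1})$-cell $C'':=(p'|_{C'})^{-1}(D')\subset C'$. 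The cell $D$, viewed over $D'$, is either a graph $\Gamma(\psi)$ or a band $(\alpha,\beta)_{D'}$; I would set $\tilde\psi:=\psi\circ p'|_{C''}$ in the first case and $\tilde\alpha:=\alpha\circ p'|_{C''}$, $\tilde\beta:=\beta\circ p'|_{C''}$ in the second, and verify that $p|_C^{-1}(D)$ equals $\Gamma(\tilde\psi)$, resp.\ $(\tilde\alpha,\tilde\beta)_{C''}$, which in either case exhibits it as a cell over $C''$.

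The one point requiring care --- and the crux of the argument --- is checking that these pulled-back functions satisfy the inequalities demanded of a cell, namely $g<\tilde\psi<h$ on $C''$ in the graph case and $\tilde\alpha<\tilde\beta$ on $C''$ in the band case; this is exactly where $D\subset p(C)$ is used. Fix $x'\in C''$ and put $y'=p'(x')\in D'$. For any value $t$ that is admissible at $y'$ (that is, $t=\psi(y')$ in the graph case, or any $t\in(\alpha(y'),\beta(y'))$ in the band case) the point $(y',t)$ lies in $D\subset p(C)$, so there is $x''\in C'$ with $p'(x'')=y'$ and $g(x'')<t<h(x'')$; by injectivity of $p'|_{C'}$ this forces $x''=x'$. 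In the graph case this gives $g(x')<\tilde\psi(x')<h(x')$ directly. In the band case it gives $(\alpha(y'),\beta(y'))\subset(g(x'),h(x'))$, whence $g(x')\le\tilde\alpha(x')<\tilde\beta(x')\le h(x')$; in particular $\max\{g,\tilde\alpha\}=\tilde\alpha$ and $\min\{h,\tilde\beta\}=\tilde\beta$ on $C''$, so intersecting $C=(g,h)_{C'}$ with $p^{-1}(D)$ indeed yields precisely $(\tilde\alpha,\tilde\beta)_{C''}$. The remaining checks --- that the displayed set really is all of $p|_C^{-1}(D)$, and that restriction and composition with $p'$ preserve continuity and definability --- are routine, and this completes the induction.
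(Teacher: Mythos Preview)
Your proof is correct and follows essentially the same approach as the paper's: induction on $n$, splitting on $i_n$, and in the $i_n=1$ case pulling back the boundary functions of $D$ along $p'|_{C''}$ while using injectivity of $p'|_{C'}$ together with $D\subset p(C)$ to verify the required inequalities. The only cosmetic difference is that the paper treats the $r=1$ subcase of $i_n=1$ separately, whereas your argument handles it uniformly (with $p'$ mapping to $\IR^0$); both are fine.
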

\begin{proof}
The proof is by induction on $n$. The case $n=1$ is immediate, so let
us assume $n\ge 2$.
We may also suppose $r\ge 1$. 

We write $\pi:\IR^n\rightarrow\IR^{n-1}$ for the projection onto
the first $n-1$ coordinates and $C'=\pi(C)$. 
We often make use of the fact that $C'$ is an
$(i_1,\ldots,i_{n-1})$-cell
and use other properties listed in Section \ref{sec:notation}. 

If $i_n=0$, then $C$ is the graph of a continuous and definable map
$f:C'\rightarrow\IR$.
We write
$q:\IR^{n-1}\rightarrow \IR^{r}$ for the projection onto the
coordinates $1\le \lambda_1<\cdots <\lambda_r < n$. 
Then $q\circ\pi=p$, so $q(C')=q(\pi(C))=p(C)$ and
\begin{alignat*}1
p|_{C}^{-1}(D)
= \{(x',f(x')) : x'\in q|_{C'}^{-1}(D) \}.
\end{alignat*}
By induction $q|_{C'}^{-1}(D)$ is a cell. This makes $p|_{C}^{-1}(D)$  the graph of a
continuous and definable function over this cell, hence itself a
cell. 

Now say $i_n=1$. Then there are continuous and definable
$f,g:C'\rightarrow\IR$, or $f=-\infty$, or $g=+\infty$, with
$f(x')<g(x')$ for all $x'\in C'$ such that
\begin{equation*}
C = \{(x',t) \in C'\times\IR :  f(x') < t < g(x')\}. 
\end{equation*}
Say $D$ is a  $(j_1,\ldots,j_r)$-cell. 

If $r=1$, then $p:\IR^n\rightarrow\IR$ projects to the final
coordinate and $C = \{\text{point}\} \times \text{interval}$, which is
easy to handle. Say $r\ge 2$. 

Let $p':\IR^{n-1}\rightarrow\IR^{r-1}$ be the projection on the
coordinates
$1\le \lambda_1 <\cdots<\lambda_{r-1} < n$ and
$\pi':\IR^{r}\rightarrow\IR^{r-1}$ onto the first $r-1$ coordinates. 
Then $\pi' \circ p = p'\circ \pi$ and
 $\pi'(D)$ is a cell in $\IR^{r-1}$ contain in $\pi'(p(C))=p'(\pi(C))=p'(C')$. By induction we see that
\begin{equation*}
C''= {p'|_{C'}}^{-1}(\pi'(D)) \subset\IR^{n-1}
\end{equation*}
is a cell.

Say $x'\in C''$. If $t\in \IR$ with
$(p'(x'),t)\in D$, then there is $\widetilde x\in C$ such that
$(p'(x'),t)=p(\widetilde x)$. Observe that $x'\in C'$ and
$\pi(\widetilde x)\in C'$ and that $p'$ is injective on $C'$.
Therefore, $x'=\pi(\widetilde x)$ and so $\widetilde x = (x',t)\in
C$. 

In the first subcase we suppose $j_r=0$. 
Here $D$ is the graph of a suitable $f_D : \pi'(D)\rightarrow\IR$. 
We
have
\begin{equation*}
p|_{C}^{-1}(D) = \{(x',t) : x'\in C'', 
f(x')<t<g(x'), \text{ and } t = f_D(p'(x')) \}. 
\end{equation*}
As we saw in the last paragraph, $x'\in C''$ implies
$f(x')< f_D(p'(x')) < g(x')$, so 
\begin{equation*}
p|_{C}^{-1}(D) = \{(x',t) : x'\in C'' \text{ and } t = f_D(p'(x')) \}
\end{equation*}
is a graph and thus a cell. 

The second subcase is $j_r=1$. Let $f_D,g_D:D'\rightarrow \IR$ with
$f_D<g_D$ on $D'$, or
$f_D=-\infty$, or $g_D=+\infty$ describe the  boundaries for $D$. 
As in the last subcase we find 
\begin{equation*}
p|_{C}^{-1}(D) = \{(x',t) : x'\in C'' \text{ and } f_D(p'(x'))<t<g_D(p'(x')) \}.
\end{equation*}
And so $p|_C^{-1}(D)$ is again a cell. 
\end{proof}

We require the following result of Wilkie. 

\begin{theorem}[Wilkie]
\label{thm:wilkiecovering}
A definable, bounded, open subset of $\IR^n$ is a finite union of open
cells. 
\end{theorem}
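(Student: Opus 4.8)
The plan is to argue by induction on $n$. For $n=1$, axiom~(v) of an o-minimal structure tells us that a definable subset of $\IR$ is a finite union of points and open intervals; if it is moreover bounded and open it is a finite union of bounded open intervals, and a bounded open interval is an open cell, so this case is done. For the inductive step I would first apply the Cell Decomposition Theorem (van den Dries \cite{D:oMin}, Chapter~3) to write $U$ as a finite disjoint union of cells $C_1,\dots,C_r$, all contained in $U$. An $n$-dimensional cell in $\IR^n$ is open and is therefore already an open cell (equivalently, a cell of type $(1,\dots,1)$). Hence it suffices to establish a \emph{Thickening Lemma}: if $C\subset\IR^n$ is a cell of dimension $<n$ with $C\subset U$, then there is an open cell $C'$ with $C\subset C'\subset U$. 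Granting this, $U=\bigcup_i C_i\subset\bigcup_i C_i'\subset U$ exhibits $U$ as a finite union of open cells.

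To prove the Thickening Lemma, put $d=\dim C$ and let $\lambda_1<\dots<\lambda_d$ be the indices in which $C$ is of interval type. The projection $p\colon\IR^n\to\IR^d$ onto the coordinates $\lambda_1,\dots,\lambda_d$ restricts to a homeomorphism of $C$ onto the cell $p(C)\subset\IR^d$, which is $d$-dimensional and hence open in $\IR^d$; furthermore $C$ is the graph over $p(C)$ of a continuous definable map $h=(h_j)_{j\notin\{\lambda_1,\dots,\lambda_d\}}$. Since $U$ is bounded and open, $\IR^n\ssm U$ is closed and non-empty, so $x\mapsto\dist{x,\IR^n\ssm U}$ is a continuous definable function on $\IR^n$ that is strictly positive on $U$. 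For $y\in p(C)$ let $\hat x(y)\in C$ be the unique point with $p(\hat x(y))=y$; then $y\mapsto\hat x(y)$ is continuous and definable, and $\rho(y):=\dist{\hat x(y),\IR^n\ssm U}$ is continuous, definable, and positive on $p(C)$. I would then widen each graph direction of $C$ to an interval about $h$ of half-width controlled by $\rho/2$: any point produced this way lies within $\rho(y)$ of $\hat x(y)\in C$ in the maximum norm, hence does not lie in $\IR^n\ssm U$, so it lies in $U$; the resulting set $C'$ visibly contains $C$, projects onto the open cell $p(C)$, and is of interval type in every coordinate, so it is an open cell.

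The subtle point, which I expect to be the main obstacle, is verifying that the widened set is genuinely a cell in the recursive sense — that is, that each new interval bound is a continuous definable function of the \emph{preceding} coordinates alone. The naive half-width $\rho(p(x))/2$ depends on all of $p(x)$, including coordinates $\lambda_i$ that come \emph{after} the graph coordinate being widened, so it cannot be used directly. I would resolve this by widening the graph coordinates one at a time, in increasing order: at each stage one replaces $C$ by the cell constructed so far and re-runs the distance estimate, measuring the distance from the point lying over the appropriate already-constructed lower-dimensional cell; equivalently, after permuting coordinates so that the graph directions occupy the last positions, one induces on $n-\dim C$. Either way the real work is the bookkeeping — keeping track of how the defining functions of a cell depend on the coordinates preceding them, and ensuring the new interval bounds remain inside $U$ uniformly as the later coordinates sweep out the thickened fibres. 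This is also where the hypothesis that $U$ be bounded is genuinely used, through the finiteness and positivity of $\dist{\cdot,\IR^n\ssm U}$ on $U$ and the compactness of $\overline U$ that keeps the construction uniform.
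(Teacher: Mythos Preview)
The paper does not prove this statement at all; it simply quotes it as Theorem~1.3 of Wilkie's paper \cite{Wilkie:covering}. Your reduction to a Thickening Lemma (every cell contained in $U$ sits inside an open cell contained in $U$) is exactly the shape of Wilkie's argument, so strategically you are on the right track.

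The gap is precisely where you say it is, and neither of your proposed fixes closes it. The suggestion to permute coordinates so that the graph directions come last is not legitimate: being a cell depends on the fixed coordinate order. For instance, the $(1,0)$-cell $\{(t,t(1-t)):t\in(0,1)\}$ becomes, after swapping, a set whose projection to the first coordinate is $(0,1/4]$, which is neither a point nor an open interval, so the swapped set is not a cell of any type.

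For the other fix --- widening the graph coordinates one at a time in increasing order --- the difficulty is not bookkeeping but substance. Once you thicken an early graph coordinate, all the defining data for later coordinates (interval endpoints and graph functions) are only defined on the original thinner base; you must \emph{extend} them continuously and definably to the wider base, and do so compatibly with staying inside $U$. Your function $\rho$ controls the second requirement but says nothing about the first. Already the $(0,1)$ case shows the naive construction fails to produce a cell: with $C=\{a\}\times(b,c)\subset U\subset\IR^2$, your recipe gives
\[
\bigl\{(x_1,x_2):x_2\in(b,c),\ |x_1-a|<\rho(x_2)/2\bigr\},
\]
whose fibres over fixed $x_1$ are $\{x_2\in(b,c):\rho(x_2)>2|x_1-a|\}$, and these need not be intervals if $\rho$ is not monotone. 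One can repair this particular case by taking, for $x_1$ near $a$, the connected component of $\{x_2:(x_1,x_2)\in U\}$ containing $(b+c)/2$ and then shrinking the $x_1$-interval until the endpoint functions are continuous; but turning this idea into an argument valid for arbitrary cell types in $\IR^n$ is exactly the content of Wilkie's proof, and it requires genuine o-minimal input (finiteness of connected components, piecewise continuity of definable functions) rather than just a distance estimate.
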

\begin{proof}
This is Theorem 1.3 \cite{Wilkie:covering}. The open cells 
 may have non-empty intersection. 
\end{proof}

This theorem extends to cells in the following way. 

\begin{lemma}
\label{lem:coveropenincell}
Suppose $C\subset\IR^n$ is a  cell and let $U\subset C$ be a
bounded and definable set that is open in $C$. 
There exist cells
$C_1,\ldots, C_s\subset \IR^n$, each  of dimension
$\dim C$, with $U=C_1\cup \cdots\cup C_s$ .
\end{lemma}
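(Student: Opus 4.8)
The idea is to reduce Lemma~\ref{lem:coveropenincell} to Wilkie's Theorem~\ref{thm:wilkiecovering} by ``straightening'' the cell $C$ via its coordinate projection. Recall from the discussion before Lemma~\ref{lem:pullbackcell} that if $C$ is an $(i_1,\ldots,i_n)$-cell of dimension $r$ and $p:\IR^n\rightarrow\IR^r$ is the projection onto the coordinates $\lambda_1<\cdots<\lambda_r$ where $i_{\lambda}=1$, then $p|_C$ is injective, and in fact $p|_C:C\rightarrow p(C)$ is a homeomorphism onto the cell $p(C)\subset\IR^r$ (this is part of the standard cell structure theory in Chapter~3 of \cite{D:oMin}; the inverse is the obvious definable section). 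First I would handle the trivial case $\dim C=0$, where $C$ is a single point, $U$ is either $C$ or empty, and the statement is immediate (with $U$ empty being vacuous, or one can note $U$ open and nonempty in a point forces $U=C$; if $U=\emptyset$ one takes $s=0$, or if the convention requires $s\ge 1$ one observes this case does not occur since $U$ is assumed nonempty). So assume $r=\dim C\ge 1$.

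\textbf{Main step.} Since $p|_C$ is a homeomorphism onto $p(C)$ and $U$ is open in $C$, the image $V:=p(U)$ is open in $p(C)$, hence open in $\IR^r$ because $p(C)$ is itself open in $\IR^r$ when $\dim p(C)=r$ — wait, that need not hold, $p(C)$ is an $r$-dimensional cell in $\IR^r$ but an $r$-dimensional cell in $\IR^r$ \emph{is} automatically an open subset of $\IR^r$ (an $(i_1,\ldots,i_r)$-cell of dimension $r$ has all $i_\lambda=1$, so it is of the form described by strict inequalities between continuous functions and is open). Thus $V$ is open in $\IR^r$. Moreover $V$ is bounded: $U$ is bounded by hypothesis, so $p(U)$ is bounded. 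And $V$ is definable since $U$ and $p$ are. Applying Theorem~\ref{thm:wilkiecovering} to $V\subset\IR^r$, we obtain open cells $D_1,\ldots,D_s\subset\IR^r$ with $V=D_1\cup\cdots\cup D_s$. Each $D_i$ is an $r$-dimensional cell contained in $V\subset p(C)$, so by Lemma~\ref{lem:pullbackcell} the set $C_i:=p|_C^{-1}(D_i)$ is a cell. It has dimension $r=\dim C$: indeed $p|_{C_i}:C_i\rightarrow D_i$ is a bijection onto an $r$-dimensional set, and a projection cannot increase dimension, while Lemma~\ref{lem:pullbackcell}'s construction in fact shows $C_i$ is a graph-type cell over a cell isomorphic to $D_i$ — more simply, $p|_C$ is injective so $\dim C_i=\dim p(C_i)=\dim D_i=r$ by Chapter~4 of \cite{D:oMin}. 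Finally, since $p|_C$ is a bijection from $C$ onto $p(C)$,
\begin{equation*}
U=p|_C^{-1}(V)=p|_C^{-1}\Bigl(\bigcup_{i=1}^s D_i\Bigr)=\bigcup_{i=1}^s p|_C^{-1}(D_i)=\bigcup_{i=1}^s C_i,
\end{equation*}
which is exactly the desired conclusion.

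\textbf{Expected obstacle.} The only genuinely delicate point is making sure all the ``folklore'' facts about cells are cleanly available: that $p|_C:C\to p(C)$ is a homeomorphism onto a cell, that an $r$-dimensional cell in $\IR^r$ is open in $\IR^r$, and that $\dim$ is preserved under the injective coordinate projection $p|_C$. Each of these is standard and can be cited from Chapter~3 and Chapter~4 of van den Dries's book \cite{D:oMin}; Lemma~\ref{lem:pullbackcell} already does the heavy lifting of showing the pullback of a cell is a cell, so no new cell-surgery is needed. I would also double-check the edge case where some $D_i$ fails to meet the relevant range — but since $D_i\subset V=p(U)\subset p(C)$ by construction, the hypothesis $D\subset p(C)$ of Lemma~\ref{lem:pullbackcell} is satisfied for every $i$, so there is nothing to fix.
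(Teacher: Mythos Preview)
Your proof is correct and follows essentially the same route as the paper: handle the $r=0$ case trivially, project $C$ to $\IR^r$ via the coordinate projection $p$ so that $p(U)$ is a bounded definable open subset of $\IR^r$, apply Wilkie's Theorem~\ref{thm:wilkiecovering} to write $p(U)$ as a finite union of open cells, and then pull each cell back through $p|_C$ using Lemma~\ref{lem:pullbackcell}. The only differences are cosmetic---the paper cites 2.7 in Chapter~3 of \cite{D:oMin} for the fact that $p|_C$ is a homeomorphism onto the open set $p(C)$, and is terser about the dimension check.
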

\begin{proof}
Let $r=\dim C$. 
There is nothing to show if $r=0$, else say $p:\IR^n\rightarrow\IR^r$
is as before Lemma \ref{lem:pullbackcell}. Then
$p|_C:C\rightarrow p(C)$ is a homeomorphism and $p(C)$ is open in
$\IR^r$, cf.  2.7 in Chapter 3 \cite{D:oMin}.
Therefore, $p(U)$ is open in $\IR^r$ and certainly bounded. 
By Wilkie's Theorem above it is covered by  cells that are open in
$\IR^r$. A  cell in such a  covering has dimension $r$
and by Lemma \ref{lem:pullbackcell} its preimage under $p|_{C}$ 
 is again a cell of dimension $r$. 
\end{proof}

\section{Induction Scheme}
\label{sec:induction}

Here is  the main technical result of this paper on diophantine approximation
on definable sets. Our theorems mentioned in the introduction  are
derived from the following statement. 

\begin{theorem}
\label{thm:approxfamilyclosed}
Suppose the ambient o-minimal structure is polynomially bounded. 
 Let $m\in\IN_0,n,\degvar\in\IN,\epsilon > 0$ and suppose
  $Z\subset\IR^m\times\IR^n$ is a closed and definable set whose
 projection to $\IR^m$ is bounded. 
  There exist $c=c(Z,\degvar,\epsilon)\ge 1$, 
$\theta = \theta(Z,\degvar,\epsilon)\in (0,1]$, integers
$l_1,\ldots,l_t\in\IN_0$, and definable sets $D_j \subset
  \IR^{l_j}\times\IR^m\times\IR^n$ for all $j\in \{1,\ldots,t\}$ with the
  following properties:
  \begin{enumerate}
  \item [(i)] Say $D = D_j$ for some $j\in \{1,\ldots,t\},
    z\in\IR^{l_j},$ and $y\in\IR^m$. Then $D_{(z,y)}\subset Z_y$ and
    if $D_{(z,y)}\not=\emptyset$, then $D_{(z,y)}$ is a connected
and  open subset of the non-singular locus of a 
real algebraic set of dimension $\dim D_{(z,y)}$. 
\item[(ii)] If $\lambda\ge \theta^{-1}, y\in\IR^m$ and $T\ge 1$ there exist an integer $N\ge 1$ with 
$N\le c T^\epsilon$ and 
$(j_p,z_p,y_p) \in \{1,\ldots,j\}\times\IR^{l_{j_p}}\times\IR^m$ 
for $p\in \{1,\ldots,N\}$ such that if 
\begin{equation*}
x\in Z_y\text{ and } q\in\IQ^n(T,\degvar)
  \text{ with }|x-q|<c^{-1}T^{-\lambda}
\end{equation*}
then $\dists{x,(D_{j_p})_{(z_p,y_p)}} < T^{-\theta\lambda}$ for some $p\in
\{1,\ldots, N\}$ with $|y-y_p|<T^{-\theta\lambda}$. 
  \end{enumerate}
\end{theorem}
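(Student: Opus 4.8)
The plan is to run the inductive scheme behind Pila and Wilkie's Counting Theorem \cite{PilaWilkie} directly for families, inserting Proposition \ref{prop:lojasiewicz} at the step where no \L ojasiewicz estimate was needed in the one-set case. I would induct on the fiber dimension $r$ of $Z$ over $\IR^m$; after a cell decomposition of $Z$ compatible with $\IR^m\times\IR^n\rightarrow\IR^m$ one may assume $r$ constant, and as the cells partition $Z$ it is enough to treat each. Two routine reductions come first: replacing $\epsilon$ by $\epsilon/(n+1)$ absorbs the at most $n$ levels of recursion, and increasing $c$ handles all $T$ in any fixed bounded range, so $T$ may be assumed large; and covering $\IR^n$ by the finitely many charts obtained by inverting subsets of coordinates --- maps preserving definability and real algebraic sets, sending $\IQ^n(T,\degvar)$ into itself, and bi-Lipschitz with bounded constant on the regions used --- reduces to $Z\subset\IR^m\times[-1,1]^n$, so that $Z$ becomes compact. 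The base case $r=0$ is immediate: the fibers are finite of uniformly bounded cardinality, so one takes the single family $D_1=Z$ with $l_1=m+n$, viewing a point of $\IR^n$ as a $0$-dimensional algebraic set, and $\theta=1$. The extreme $r=n$ is also immediate: a nonempty fiber of an $n$-dimensional cell is a connected open subset of $\IR^n$, i.e. of the non-singular locus of the zero algebraic set, so that cell is a $D_j$ and one covers with $N=1$, $y_1=y$.

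For the inductive step, fix a cell $C$ of fiber dimension $1\le r\le n-1$. Reparametrizing $C$ by the o-minimal reparametrization theorem of \cite{PilaWilkie} (uniformly in $y$, to order $b+1$) yields finitely many definable maps $\phi$, each on $W\times(0,1)^r$ for a definable $W\subset\IR^{l'}\times\IR^m$, with $\phi(w,\cdot):(0,1)^r\rightarrow Z_y$ injective, all partials to order $b+1$ bounded by a fixed $B$, and the images $\phi(w,(0,1)^r)$ covering $C_y$ up to a lower-dimensional subset (absorbed by the recursion). Choose $b$ --- and a degree $d=d(b)$ with $D_{r+1}(d)\ge(\degvar+1)D_r(b)$, which may be had with $d$ of order $b^{r/(r+1)}$ so that $d/b\rightarrow 0$ --- large enough that the exponent $(r+1)^2\degvar\,d/b$ below is less than $\epsilon$. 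For each of the $\binom{n}{r+1}$ coordinate projections $p_J:\IR^n\rightarrow\IR^{r+1}$, $J\in\cJ_{n,r}$, Proposition \ref{prop:determinant} applied to $p_J\circ\phi(w,\cdot)$ (with $k=r$, ambient dimension $r+1$, and $\degvar$) gives at most $cT^\epsilon$ normalized polynomials $f\in\IQ[X_j:j\in J]$ of degree $\le d$ such that, whenever $x=\phi(w,z)\in C_y$ is within $c^{-1}T^{-\lambda}$ of some $q\in\IQ^n(T,\degvar)$ and $\lambda$ exceeds the fixed exponent produced there, one has $f(p_J(q))=0$ and $|f(p_J(x))|\le c|x-q|<T^{-\lambda}$ for one such $f$. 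Fixing one choice per $J$ yields polynomials $\tilde f_J\in\IR[X_1,\ldots,X_n]$, $\tilde f_J$ using only the $r+1$ coordinates of $J$, with $\tilde f_J(q)=0$ and $|\tilde f_J(x)|<T^{-\lambda}$; the number of possible tuples $(\tilde f_J)_{J}$ is $\le cT^{\epsilon'}$ with $\epsilon'$ a fixed multiple of $\epsilon$.

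Now invoke Proposition \ref{prop:lojasiewicz}, iterating over $J\in\cJ_{n,r}$ while carrying a compact definable family whose fibers are $r$-dimensional pieces of slices of $Z$ that contain the surviving approximated points (at the start, the closure of the relevant subset of $Z$). At step $J$, apply Proposition \ref{prop:lojasiewicz} to $\{(g,y,x):x \text{ in the current fiber over }y,\ g\in P\}$ with $f=\tilde f_J$, where $P$ is the set of normalized polynomials of degree $\le d$: it confines the surviving points to within $T^{-\delta\lambda}$ of a bounded number --- independent of $y$ --- of sets $(\text{current fiber over }y_p)\cap\zeroset{g_p}$ with $g_p$ near $\tilde f_J$ and $y_p$ near $y$. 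If this intersection has dimension $<r$, restrict to the (closed, after taking closures) locus where it does and recurse by the outer induction on a closed definable family of fiber dimension $<r$; otherwise keep the $r$-dimensional part as the current family for step $J+1$, with $\tilde f_J$ replaced by the nearby $g_p$. After all $J$ are exhausted without a drop, the surviving points lie within $T^{-\delta\lambda}$ of an $r$-dimensional piece of some $Z_{y'}$ contained in $\zeroset{g'_J:J\in\cJ_{n,r}}$ with each $g'_J$ near $\tilde f_J$ (so using only the coordinates of $J$); passing to irreducible factors and applying Lemma \ref{lem:singulardim}, the rank-deficient locus there has dimension $\le r-1$ --- recursed on as above --- while off it Lemma \ref{lem:cellinregularset} exhibits the piece as an open subset of the non-singular locus of an $r$-dimensional real algebraic set, which Lemma \ref{lem:coveropenincell} splits into finitely many $r$-dimensional cells, i.e. \qacs. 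These \qacs, assembled into definable families over the accumulated parameters (with $l_j$ their dimension), are the sets $D_j$. Since each application of Propositions \ref{prop:determinant} and \ref{prop:lojasiewicz}, and each cell or reparametrization decomposition, multiplies the number of pieces to be tracked by at most $cT^\epsilon$ (for Proposition \ref{prop:determinant}) or by a bounded constant (otherwise), over the $\le\binom{n}{r+1}$ inner steps and $\le n$ outer levels one gets $N\le cT^\epsilon$ after the initial replacement of $\epsilon$; composing the finitely many exponents $\delta$ gives a single $\theta=\theta(Z,\degvar,\epsilon)\in(0,1]$; and $\lambda\ge\theta^{-1}$ ensures every intermediate radius stays below the relevant threshold $c$ and the final radius $T^{-\theta\lambda}$ is less than $1$ (with $|y-y_p|<T^{-\theta\lambda}$ coming from the accumulated drift of the parameters $y_p$).

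I expect the main obstacle to be making two features coexist. First, the dichotomy ``the auxiliary polynomials cut the fiber dimension down, or else they vanish on the piece'' must be organized so that in the non-dropping branch one genuinely lands inside a quasi-algebraic cell --- this is exactly what the bookkeeping with the projections indexed by $\cJ_{n,r}$ and Lemmas \ref{lem:singulardim}, \ref{lem:cellinregularset}, \ref{lem:coveropenincell} achieves, presenting the cell as a near-complete-intersection in a real algebraic set of the right dimension. Second, everything must be uniform in $y\in\IR^m$: all the counts bounded independently of $y$, and the family one recurses on kept closed with bounded base at every level so that Proposition \ref{prop:lojasiewicz} and the reparametrization theorem keep applying. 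It is precisely this uniformity that the classical \L ojasiewicz inequality fails --- as Example \ref{ex:lojafamilies} shows --- and for which Proposition \ref{prop:lojasiewicz} was proved; controlling the drift of the parameters $y_p$ and $g_p$ through the recursion so that $|y-y_p|<T^{-\theta\lambda}$ holds in the end is part of the same difficulty.
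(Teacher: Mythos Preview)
Your proposal follows essentially the same route as the paper, with one organizational difference: where you iterate Proposition~\ref{prop:lojasiewicz} over the projections $J\in\cJ_{n,r}$, the paper packages the whole tuple $(\tilde f_J)_J$ into a single element of the compact semi-algebraic set $F_{r,d}=\{\sum_J f_J^2:f_J\in\IR[X_j:j\in J]_d,\ |f_J|=1\}$ and applies Proposition~\ref{prop:lojasiewicz} once to $F_{r,d}\times\overline C$. This is cleaner because the output $(f_i,y_i)$ automatically has $f_i\in F_{r,d}$, so each perturbed component still lives in the correct variable set and Lemma~\ref{lem:singulardim} applies directly after factoring. Your iteration can be made to work too, but only if at step $J$ the polynomial parameter space in Proposition~\ref{prop:lojasiewicz} is taken to be $\IR[X_j:j\in J]_d$ and the previously obtained $g'_{J'}$ are carried as part of the $\IR^m$-parameter so that their perturbations stay in their own variable sets; this needs to be said explicitly.

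The genuine gap is your reduction to the compact case. Coordinate inversion (or any compactifying chart, including the paper's $\varphi(x_i)=x_i/(1+x_i)$) is \emph{not} bi-Lipschitz with a bounded constant on the regions used: the inverse has derivative blowing up at the boundary of the image, so after applying Proposition~\ref{prop:lojasiewicz} on the compactified family the returned point $x''$ could a priori lie so close to that boundary that pulling back to $Z$ destroys the distance estimate and the inclusion $D_{(z,y')}\subset Z_{y'}$. The paper handles this with an extra Liouville step: since $q\in\IQ^n(T,\degvar)$, each coordinate of $\varphi(q)$ is at distance at least $2^{-\degvar}T^{-\degvar}$ from $1$; then choosing $\kappa=2^{-2\degvar-2}$ in Theorem~\ref{thm:approxfamily} forces $x''$, which is $\kappa T^{-\theta'\lambda}$-close to $\varphi(x)$ and hence to $\varphi(q)$, to stay a distance at least $2^{-\degvar-1}T^{-\degvar}$ from the boundary. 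The inverse Lipschitz constant at $x''$ is then only $O(T^{2\degvar})$, absorbed into $\theta$. You also need the companion fact, Lemma~\ref{lem:preimagevarphi}, that pulling a \qac{} back through such a chart still yields an open subset of the non-singular locus of a real algebraic set of the same dimension; your phrase ``preserving real algebraic sets'' gestures at this but does not cover the non-singularity and openness required by part~(i).
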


In this section we work in a fixed o-minimal structure  which is
arbitrary at first. 
The goal is to start the induction step and eventually prove Theorem \ref{thm:approxfamilyclosed}.

For the next lemma we do not need to assume that 
the ambient o-minimal structure is polynomially
bounded as in the theorem above. 
Let $n\in\IN$.
For $d\in\IN_0$ we define $\IR[X_1,\ldots,X_n]_d$ to be the vector space of
polynomials
in $\IR[X_1,\ldots,X_n]$ of degree at most $d$ including
$0$. 
We will identify this vector space  with 
 $\IR^{D_n(d)}$.
If $f\in\IR[X_1,\ldots,X_n]$, then  $|f|$ denotes the
maximum norm of the coefficient vector of $f$.  

Let $r\in\IN_0$ with $r\le n -1$. 
Recall that $\cJ_{n,r}$ is the set of subsets of $\{1,\ldots,n\}$ with
$r+1$ elements. 
We define
\begin{alignat*}1
  F_{r,d} &= \left\{ \sum_{J\in \cJ_{n,r}} f_J^2 :  f_J \in \IR[X_j : j
      \in J]_d \text{ and } |f_J|=1
\text{ for all $J\in \cJ_{n,r}$}\right\}
\end{alignat*}
Observe that each $f_J$ 
depends only on the variables indexed by $J$. 
We may identify
$F_{r,d}$ with a subset of $\IR^{{n +2d \choose n}}$.
It is the image of
\begin{equation}
\label{eq:ftupleset}
  \left\{(f_J)_{J\in \cJ_{n,r}} \in \IR[X_1,\ldots,X_n]_d^{{n\choose
      r+1}} :
f_J \in \IR[X_j : j\in J]\text{ and } |f_J| = 1
  \text{ for all }J\in \cJ_{n,r}\right\},
\end{equation}
which we may identify with  a  semi-algebraic subset of $\IR^{{ n+d
    \choose n}{n\choose r+1}}$, 
under the semi-algebraic map $(f_J)_J \mapsto \sum_{J} f_J^2$. Thus $F_{r,d}$ is a semi-algebraic set. 
As this map is  continuous and since (\ref{eq:ftupleset}) is compact,
we conclude that $F_{r,d}$ is compact. 

The zero set $\zeroset{f}\subset\IR^n$ of  $f=\sum_{J} f_J^2$ is the
intersection of the zero sets of  all the $f_J$. The projection of $\zeroset{f}$
to the $r+1$ distinct coordinates in a given $J\in \cJ_{n,r}$ is contained in
$\zeroset{f_J}$, taken as a subset of $\IR^{r+1}$. 
As $f_J\not=0$, this projection does not contain a non-empty
open subset of $\IR^{r+1}$. It follows that $\dim \zeroset {f}\le r$
for all $f\in F_{r,d}$. 

For $n=r$ it is convenient to define $F_{n,d} = \{0\}$ and identify
$0$ with the zero polynomial in $\IR[X_1,\ldots,X_n]$. 
This is clearly also a compact and semi-algebraic set with $\dim
\zeroset{f} \le n$ if $f\in F_{n,d}$. Recall that the fiber dimension
was introduced near the end of Section \ref{sec:notation}. 

\begin{lemma}
\label{lem:countapprox}
 Let $m\in\IN_0,n,\degvar\in\IN,$  and $\epsilon\in(0,1]$. Suppose
 $C\subset\IR^m\times\IR^n$ is a  cell
 whose projection  to $\IR^n$ is  bounded and which has 
fiber dimension $r$
over $\IR^m$. 
There exist constants $c = c(C,\degvar,\epsilon)\ge 1,
d=d(n,r,\degvar,\epsilon)\in\IN$, and $0<\lambda \le 4(r+1)^{2r+2}\degvar^{r+1} (\degvar +1){n\choose r+1}^r\epsilon^{-r}$ with the following
property. Say $y\in\IR^m$. If
$T\ge 1$  there exist $N\in\IN$ with $N\le cT^\epsilon$ and polynomials
$f_1,\ldots,f_N\in F_{r,d}$ such that if 
\begin{equation}
\label{eq:countapproxhyp}
\text{$x\in C_y$ and $q\in\IQ^n(T,\degvar)$ with }
|x-q| <
  c^{-1}T^{-\lambda} 
\end{equation}
then $f_j(q)=0$ and $|f_j(x)| \le |x-q|$ for some $j\in \{1,\ldots,N\}$. 
\end{lemma}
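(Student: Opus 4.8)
Throughout, the plan is to reduce to Proposition \ref{prop:determinant}, applied to the coordinate-block projections of a parametrizing map of $C$, and then to glue the resulting block polynomials into an element of $F_{r,d}$ by forming a sum of squares. If $C_y=\emptyset$ there is nothing to prove, and if $r=n$ then $F_{n,d}=\{0\}$ and one is done with $N=1$, $f_1=0$, $c=1$, and $\lambda$ equal to the displayed bound; so I assume $C_y\ne\emptyset$ and $r\le n-1$, so that $\cJ_{n,r}\ne\emptyset$.

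I would first dispose of the base case $r=0$ separately, since Proposition \ref{prop:determinant} needs $k\ge1$. Here $C_y=\{x_0\}$ is a single point. Taking $\lambda$ slightly above the Liouville/root-separation exponent for real algebraic numbers of degree $\le\degvar$ and height $\le T$ --- which, by Lemma \ref{lem:liouville} (applied to a minimal polynomial of one coordinate of a candidate $q$, evaluated at the corresponding coordinate of a competing $q'$) together with the standard lower bound for the separation of distinct roots of an integer polynomial, is at most $2\degvar^2+\degvar<4\degvar(\degvar+1)$ --- and enlarging $c$ to absorb finitely many small $T$, at most one $q\in\IQ^n(T,\degvar)$ can lie within $c^{-1}T^{-\lambda}$ of $x_0$. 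For that $q$, if it exists, put $f_1=\sum_{i=1}^n\tilde P_i^2\in F_{0,d}$, where $\tilde P_i\in\IR[X_i]$ is the minimal polynomial over $\IZ$ of the $i$-th coordinate of $q$, normalized to $|\tilde P_i|=1$; it vanishes at $q$, and expanding each $\tilde P_i$ about the corresponding coordinate of $q$ (using boundedness of $C$) gives $|f_1(x_0)|\le c'|x_0-q|^2\le|x_0-q|$ once $|x_0-q|<c^{-1}$ and $c\ge c'$. This uses only $d\ge\degvar$, and $N=1$.

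For $1\le r\le n-1$ the plan is as follows. First I would fix integers $b=b(n,r,\degvar,\epsilon)$ and $d=d(n,r,\degvar,\epsilon)$, with $d\ge\degvar$, satisfying $D_{r+1}(d)\ge(\degvar+1)D_r(b)$ and $(r+1)^2\degvar d/b\le\epsilon/\binom{n}{r+1}$; such a pair exists because, with $b$ of size $\asymp d\binom{n}{r+1}/\epsilon$, one has $D_r(b)\asymp d^r$ whereas $D_{r+1}(d)\asymp d^{r+1}$, so the first inequality holds once $d$ is large enough, and a concrete choice makes the approximation-quality exponent $\lambda:=\frac{(r+1)^2\degvar}{r}\cdot\frac{d(b+1)}{b}$ produced by Proposition \ref{prop:determinant} below satisfy $0<\lambda\le4(r+1)^{2r+2}\degvar^{r+1}(\degvar+1)\binom{n}{r+1}^r\epsilon^{-r}$ --- this last point being an elementary but tedious estimate with binomial coefficients (one uses that $b$ is large, so $(b+1)/b$ is close to $1$) which I would not spell out. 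Next I would invoke the reparametrization theorem of Pila--Wilkie, uniformly over the parameter space, for the cell $C$ (of fiber dimension $r$, with bounded projection to $\IR^n$); since $C$ is a cell of pure fiber dimension $r$, this can be arranged so that there are finitely many definable maps $\phi^{(1)},\dots,\phi^{(M)}:\IR^m\times(0,1)^r\to\IR^n$ and a constant $B=B(C,b)\ge1$ with the property that, for every $y$, the coordinate functions of $\phi^{(s)}_y$ have continuous partial derivatives up to order $b+1$ of modulus $\le B$ on $(0,1)^r$, and the images $\phi^{(s)}_y((0,1)^r)$ cover $C_y$. Fix $y$ and $T\ge1$; for each $s$ and each $J\in\cJ_{n,r}$, writing $p_J:\IR^n\to\IR^{r+1}$ for the projection onto the coordinates indexed by $J$, I would apply Proposition \ref{prop:determinant} with $k=r$, with $r+1$ in place of $n$, and with $b,d,\degvar,B$, to $p_J\circ\phi^{(s)}_y:(0,1)^r\to\IR^{r+1}$ (legitimate since $D_{r+1}(d)\ge(\degvar+1)D_r(b)$). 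This yields a constant $c_0\ge1$ independent of $s,J,y$, an integer $N_{s,J}\le c_0T^{\epsilon/\binom{n}{r+1}}$, and polynomials $g_{s,J,1},\dots,g_{s,J,N_{s,J}}\in\IQ[X_j:j\in J]\ssm\{0\}$ of degree $\le d$ with $|g_{s,J,i}|=1$, such that whenever $z\in(0,1)^r$ and $q\in\IQ^n(T,\degvar)$ satisfy $|p_J(\phi^{(s)}_y(z))-p_J(q)|<c_0^{-1}T^{-\lambda}$ (note $p_J(q)\in\IQ^{r+1}(T,\degvar)$), some $g_{s,J,i}$ vanishes at $p_J(q)$ and has modulus $\le c_0|p_J(\phi^{(s)}_y(z))-p_J(q)|$ at $p_J(\phi^{(s)}_y(z))$.

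Finally, for each $s$ I would take all $\prod_{J}N_{s,J}$ polynomials $\sum_{J\in\cJ_{n,r}}g_{s,J,\iota_J}^2\in F_{r,d}$ (over tuples $(\iota_J)_J$ with $1\le\iota_J\le N_{s,J}$); there are at most $Mc_0^{\binom{n}{r+1}}T^\epsilon$ of them in total, which bounds $N$. Given $x\in C_y$ and $q\in\IQ^n(T,\degvar)$ with $|x-q|<c^{-1}T^{-\lambda}$ and $c\ge c_0$, pick $s$ with $x\in\phi^{(s)}_y((0,1)^r)$; then $|p_J(x)-p_J(q)|\le|x-q|<c_0^{-1}T^{-\lambda}$ for every $J$, so an appropriate choice of each $\iota_J$ gives $f=\sum_Jg_{s,J,\iota_J}^2$ with $f(q)=\sum_Jg_{s,J,\iota_J}(p_J(q))^2=0$ and $f(x)=\sum_Jg_{s,J,\iota_J}(p_J(x))^2\le\binom{n}{r+1}c_0^2|x-q|^2\le|x-q|$, the last inequality provided $c\ge\binom{n}{r+1}c_0^2$ (so that $\binom{n}{r+1}c_0^2|x-q|<\binom{n}{r+1}c_0^2c^{-1}\le1$). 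Taking $c=\max\{Mc_0^{\binom{n}{r+1}},\binom{n}{r+1}c_0^2,c_0,1\}$ finishes the argument. The main obstacle I anticipate is the bookkeeping that forces the per-block exponent budget to be $\epsilon/\binom{n}{r+1}$ instead of $\epsilon$ --- which is precisely what produces the factor $\binom{n}{r+1}^r$ in the bound on $\lambda$ --- together with the concrete choice of $b,d$ and the elementary verification of that bound; the other essential input is the family version of the reparametrization theorem.
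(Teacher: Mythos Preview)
Your proposal is correct and follows the paper's strategy: apply Proposition~\ref{prop:determinant} on each of the $\binom{n}{r+1}$ coordinate blocks with exponent budget $\epsilon/\binom{n}{r+1}$, then assemble the resulting block polynomials into elements of $F_{r,d}$ by summing squares, the product of the block counts giving $N\le cT^\epsilon$.

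The one substantive difference is the order of operations. You reparametrize the $r$-dimensional fiber $C_y$ once by $(0,1)^r$ (uniformly in $y$, since $C$ is a cell of fixed fiber dimension) and then compose with each projection $p_J$, so Proposition~\ref{prop:determinant} is always invoked with $k=r$. The paper instead projects $C_y$ first to $Z_y\subset\IR^{r+1}$ and reparametrizes each image by $(0,1)^k$ with $k=\dim Z_y\le r$; this forces a separate $k=0$ subcase and in principle allows $k$ to depend on both the block $J$ and on $y$, which your ordering sidesteps. Your $r=0$ base case via normalized minimal polynomials of the coordinates of $q$ is a harmless variant of the paper's $(X'_j-q_j)^2$ construction for $k=0$. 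The bound on $\lambda$ you leave unspelled is handled in the paper via the inequality $d/b<\bigl((\degvar+1)(r+1)/(d+1)\bigr)^{1/r}$ (valid once $b$ is chosen maximal subject to $(\degvar+1)D_r(b)\le D_{r+1}(d)$), which forces $d\le(r+1)^{2r+1}\degvar^r(\degvar+1)\binom{n}{r+1}^r\epsilon^{-r}$ and hence, with $(b+1)/b\le 2$ and $(r+1)/r\le 2$, gives $\lambda\le 4(r+1)\degvar d$ and the stated bound. Your asymptotic remark ``$D_r(b)\asymp d^r$'' is correct once one reads it as asymptotics in $d$ with $n,r,\degvar,\epsilon$ fixed (so $b\asymp d$).
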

\begin{proof}
Recall that each fiber $C_y\subset\IR^n$ is either
empty or a cell of dimension $r$.  

The case $r=n$ can be handled easily. Indeed, here we may take $c=d=\lambda=1$ and one polynomial
 $f_1 = 0 \in F_{n,1}$ is enough.

Now we assume $r\le n-1$. 
Let $y\in \IR^m$, to prove the lemma we may assume
$C_y\not=\emptyset$.

Let $Z$ denote one of the ${n \choose r + 1}$  projections of
$C$ to $\IR^{m}\times\IR^{r+1}$. Each such projection corresponds to
the choice of  $r+1$ variables among $X_1,\ldots,X_n$.
We let $X'_1,\ldots,X'_{r+1}$ denote these chosen variables.

We define $k = \dim Z_y$ and note that $k\le \dim C_y=r$ as $Z_y$ is
the image of $C_y$ under a projection. 

We proceed by proving the following intermediate claim. 

\textit{
There exist $0<\lambda\le
4(r+1)^{2r+2}\degvar^{r+1} (\degvar +1){n\choose r+1}^r
  \epsilon^{-r}$, $d\in\IN$, 
and $c_1\ge 1$ depending only  on $C,\degvar,$ and
$\epsilon$ with
the following property. 
If $T\ge 1$
there exist $N\in\IN$ with $N\le c_1T^{\epsilon/{n\choose r+1}}$  and
polynomials
 $f'_1,\ldots,f'_N\in \IR[X'_1,\ldots,X'_{r+1}]_d$
with $|f'_1|=\cdots = |f'_N|=1$
 such that if 
\begin{equation}
\label{eq:intermediateclaim}
x'\in Z_y\text{ and }q'\in\IQ^{r+1}(T,\degvar)\cap\IR^{r+1}
\text{ with }
|x'-q'|<c_1^{-1} T^{-\lambda}
\end{equation}
then  $f_j'(q')=0$ and  $|f_j'(x')| \le c_1 |x'-q'|$  for some for $j\in
\{1,\ldots,N\}$.}

We prove the claim in the case  $k=0$  first; here we may take
$\lambda = 2\degvar^2$ and $N=1$. 
The set $Z_y$, being the continuous image of a connected space, is a
singleton $\{x'\}$. 
We fix $q$ in the finite set $\IQ^{r+1}(T,\degvar)\cap\IR^{r+1}$ 
such that $|x'-q|$ is minimal and
 take $f'\in \IR[X'_1,\ldots,X'_{r+1}]$ 
to be the normalization of
$\widetilde{f'}=(X'_1-q_{1})^2+\cdots+(X'_{r+1}-q_{r+1})^2$
where $q=(q_{1},\ldots,q_{r+1})$.
Observe $|\widetilde{f'}|\ge 1$ for all $j$.
If   $q'\in\IQ^{r+1}(T,\degvar)\cap\IR^{r+1}$  is as in (\ref{eq:intermediateclaim})
then 
\begin{equation*}
  |f'(x')|\le |\widetilde{f'}(x')|\le (r+1)|x'-q|^2 \le (r+1)|x'-q'|^2\le (r+1)|x'-q'|
\end{equation*}
by minimality of $|x'-q|$ and since $|x'-q'|\le 1$. 
As we may
assume 
$c_1\ge r+1$ we find $|f'(x')|\le c_1 |x'-q'|$.
It remains to prove that $f'(q')$ vanishes. Note that $|q'-q|\le
|q'-x'|+|x'-q| \le 2|x'-q'|\le 2c_1^{-1}T^{-\lambda}$. 
If $q'\not=q$ then  Liouville's
Inequality, Theorem 1.5.21 \cite{BG}, yields
$|q'-q|\ge (2H(q')H(q))^{-\degvar^2}\ge
2^{-\degvar^2}T^{-2\degvar^2}$. 
Combining upper and lower bound yields
$c_1\le 2^{\degvar^2+1}T^{2\degvar^2-\lambda}$
and so $c_1\le 2^{\degvar^2+1}$ since $\lambda=2\degvar^2$. 
So if we assume, as we may, that $c_1>2^{\degvar^2+1}$, then
$q'=q$. Thus $f'(q')=f'(q)=0$ and 
this settles our intermediate claim  if $k=0$. 

Now say $k\ge 1$. Recall that $k\le r$. 
Let $d$ be an integer satisfying $d+1\ge
(\degvar+1)(r+1)$. We will fix $d$ in terms of $\epsilon$ in a
moment. But first observe that $(\degvar+1)D_k(1) = (\degvar+1)(k+1)\le
(\degvar+1)(r+1)\le 
d+1 \le  D_{r+1}(d)$. The binomial coefficient $D_k(b)$ increases
strictly in $b$  since $k\ge 1$. So there
exists a unique $b\in\IN$, depending on $d$, with
\begin{equation}
\label{eq:Siegelhyp}
  (\degvar+1) D_k(b) \le D_{r+1}(d) < (\degvar+1) D_k(b+1).
\end{equation}
We obtain
\begin{equation*}
\degvar+1>  \frac{D_{r+1}(d)}{D_k(b+1)} 
\ge\frac{D_{k+1}(d)}{D_k(b+1)} 
= \frac{d+1}{k+1}\left( \frac{d+2}{b+2}\cdots
  \frac{d+k+1}{b+k+1}\right)
\ge \frac{d+1}{r+1}\left( \frac{d+2}{b+2}\cdots
  \frac{d+k+1}{b+k+1}\right)
\end{equation*}
and thus we must have $d<b$. Hence each one of the $k$ factors in the parentheses on
the right is greater than $d/b<1$. Therefore, $\degvar+1 > 
(d/b)^r(d+1)/(r+1)$. We rearrange terms and find
\begin{equation}
\label{eq:dbbound}
  \frac db < \left(\frac{(\degvar+1)(r+1)}{d+1}\right)^{1/r}.
\end{equation}
Observe that the right-hand side goes to $0$ as $d$ tends to $+\infty$. 

We choose $d$ 
to be the least integer
$d\ge (\degvar+1)(r+1)-1\ge 1$  such that 
\begin{equation}
\label{eq:epsilonbound}
(k+1)(r+1)\degvar \frac db \le 
\frac{\epsilon}{{n\choose r+1}}
\end{equation}
holds. By rearranging and using $\epsilon\in (0,1]$ as well as $k\le
  r$ we find, using (\ref{eq:dbbound}),  that $d$ satisfies
\begin{equation}
\label{eq:boundd}
  d\le (k+1)^r(r+1)^{r+1}\degvar^r(\degvar +1) {n\choose r+1}^r
  \epsilon^{-r}
\le (r+1)^{2r+1}\degvar^r(\degvar +1) {n\choose r+1}^r \epsilon^{-r}.
\end{equation}
The choice of $d$ uniquely determines  $b$, which is bounded from above in terms of $n,\epsilon,$ and
$\degvar$ only. 

We now apply Pila and Wilkie's reparametrization  Corollary 5.2
\cite{PilaWilkie}. 
Thereby, the fiber 
 $Z_y$ can be covered by the images
of a finite number of maps $\phi:(0,1)^{k}\rightarrow\IR^{r+1}$  for which all 
derivatives up-to order $b+1$ exist, are continuous, and have modulus
bounded by a constant $B\ge 1$. Observe that the number of maps 
 and $B$ are bounded independent of $y$. 
 Pila and Wilkie assume that the definable set is in $(0,1)^{r+1}$,
 but this restriction is harmless as the projection of  $C$ to
 $\IR^n$ is bounded by hypothesis. So we can recover the desired
 statement by scaling. 

We now apply Proposition \ref{prop:determinant} with $n$ replaced by
$r+1$ to the $\phi$, 
recalling (\ref{eq:Siegelhyp}) and  (\ref{eq:epsilonbound}).
For given $T\ge 1$ there is  an integer $N\le c_1 T^{\epsilon / {n\choose r+1}}$,
with $c_1 \ge 1$ as in the said proposition,
and polynomials $f'_1,\ldots,f'_N\in  \IQ[X'_1,\ldots,X'_{r+1}]\ssm
\{0\}$ of degree at most $d$ and norm $1$
such that the assertion of the claim made above holds true
for $\lambda = 4(r+1)\degvar d$ as
\begin{equation*}
  4(r+1)\degvar d\ge 
\frac{(k+1)(r+1)\degvar}{k} \frac{d(b+1)}{b}.
\end{equation*}
Observe that in this case $c_1$ is independent of $y$. 
As $d$ is bounded by (\ref{eq:boundd}) we retrieve
\begin{equation*}
  \lambda \le 4(r+1)^{2r+2}\degvar^{r+1} (\degvar +1){n\choose r+1}^r
  \epsilon^{-r}.
\end{equation*}
This completes the proof of our intermediate claim. 

We may treat the constants $\lambda>0$ and  $c_1\ge 1$ found as independent of the
choice of $r+1$ coordinates. 
The constant in the assertion is 
 $c = \max\{2^n c_1^2,c_1^{n \choose r+1}\}$. 
The construction above yields for each choice of $r+1$ coordinates among all $n$ coordinates of
$\IR^n$, given $T$, a tuple of at most $c_1 T^{\epsilon /  {n\choose r+1}}$
normalized polynomials in the corresponding $r+1$ variables and 
with the stated properties. 
We take as the $f_j$ all possible  sums of squares
of the $f'_j$ that appear above where each term corresponds to one of
the ${n \choose r+1}$ projections. 
In total there at most $c_1^{n \choose r+1} T^\epsilon\le cT^\epsilon$ possible
polynomials by our choice of $c$,  
they lie in $F_{r,d}$

Now say
$x\in C_y$ and $q\in \IQ^n(T,\degvar)$ with
 $|x-q|<c^{-1}T^{-\lambda}\le
(2^n c_1^2)^{-1}$.
Then one of the $f_j$ just constructed satisfies $f_j(q)=0$ and
\begin{equation*}
  |f_j(x)|\le {n\choose r+1} c_1^2 |x-q|^2 
\le 2^n c_1^2 |x-q||x-q|
 \le |x-q|. \qedhere
\end{equation*}
\end{proof}

The coefficients of each polynomial $f_j$ produced by this last lemma
are algebraic and have uniformly bounded degree over $\IQ$. 

The fact that some $f_j$ vanishes at $q$ will
play no role in the remaining argument. But from this conclusion we  
can infer something about
 algebraic approximations of a bounded cell $C$
without restricting to polynomially bounded sets. 
Indeed, they
 lie on at most $cT^\epsilon$  real algebraic sets of dimension at
 most $\dim C$ that are cut out by a polynomial of controlled degree.

For the rest of this section we suppose that the ambient o-minimal
structure is polynomially bounded.

The following statement is proved by induction on 
the fiber dimension $r\in \IN_0$. In the induction step we need to keep track of
additional data, for this reason we work with a prescribed cell
partition 
 of our given definable family.

\begin{statementr}
 Let $m\in \IN_0,n,\degvar\in\IN$ with $r\le n,$ and  let $\epsilon
 \in (0,1], \kappa \in (0,1]$. Suppose we are given
 $(Z,C_1,\ldots,C_s)$ where 
 $Z\subset \IR^m\times \IR^n$ is  compact and definable such that
$C_1\cup \cdots\cup C_s$   is a  partition
of $Z$ into 
 cells $C_1,\ldots,C_s$. There exist
 $c=c(C_1,\ldots,C_s,\degvar,\epsilon,\kappa)\ge 1$,  
$\theta=\theta(C_1,\ldots,C_s,\degvar,\epsilon)\in (0,1]$,
integers $l_1,\ldots,l_t\in\IN_0$, 
and bounded cells $D_j \subset \IR^{l_j}\times\IR^m\times\IR^n$ for all
$j\in \{1,\ldots,t\}$
with the following properties:
 \begin{enumerate}
 \item [(i)] Say $D= D_j$ for some $j\in \{1,\ldots,t\}, z\in
   \IR^{l_j},$ and $y\in \IR^m$. 
Then $D_{(z,y)}\subset Z_y$ and if 
$D_{(z,y)}\not=\emptyset$, then
$\dim D_{(z,y)} \le r$ and $D_{(z,y)}$ is a \qac.
\item[(ii)]
Say $C=C_j$ has fiber dimension $r$ over $\IR^m$. 
If $\lambda\ge \theta^{-1},y\in\IR^m,$ and $T\ge 1$ there exist an
integer $N\ge 1$ with $N\le
cT^{\epsilon}$ and  $(j_p,z_p,y_p) \in 
\{1,\ldots,t\}\times\IR^{l_{j_p}}\times\IR^m$
for $p\in \{1,\ldots,N\}$ such that if 
\begin{equation}
\label{eq:approximationC}
 x\in C_y \text{ and }q\in\IQ^n(T,\degvar)
 \text{ with }|x-q|<c^{-1}T^{-\lambda}
\end{equation}
then $\dists{x,(D_{j_p})_{(z_p,y_p)}} < \kappa T^{-\theta\lambda}$
for some $p\in \{1,\ldots,N\}$ with $|y-y_p|<\kappa T^{-\theta\lambda}$. 
 \end{enumerate}
\end{statementr}
\begin{proof}
  We prove by induction on $r$ that \statement{$r$} holds true for
all $r$. 
During the argument we will choose  $c\ge 1$ and 
 $\theta > 0$ in terms of the appropiate data.

If $r=0$ and if   $C$ is a cell  appearing  in (ii)
then any  non-empty
fiber $C_y\not=\emptyset$  consists of a single
point. Therefore, \statement{$0$} holds true by taking the $D_j$ to equal the
$C_j$ that have  fiber dimension $0$ over $\IR^m$ and  $l_j=0$. 
Part (ii) follows with $N=\theta=c=1$ and $y_1=y$.
\statement{$n$} can be handled in a similar fashion. It holds true by adding those $C_j$ to our list in
(i) that have fiber dimension $n$ over $\IR^m$; indeed, $n$-dimensional cells are quasi-algebraic.

So let $1\le r\le n-1$ and 
 suppose that \statement{$r'$} holds true for all $r'\le r-1$.

Let $C,\lambda,y,$ and $T$ be as in (ii). We apply Lemma
\ref{lem:countapprox} to $C$ and obtain  $c_1,d,$ and $\lambda_1$. We
may assume that $\lambda_1$ attains the upper
bound provided by the lemma, so it depends only on $n,\degvar, r,$ and
$\epsilon$.
We may also suppose
$c\ge c_1$ and $\theta \le \lambda_1^{-1}$. So
$\lambda \ge \theta^{-1}\ge \lambda_1$ and hence
$c^{-1}T^{-\lambda}\le c_1^{-1}T^{-\lambda_1}$.
By the lemma there is 
a collection $f_1,\ldots,f_U$ of polynomials in $F_{r,d}$  with $U\le c_1T^\epsilon$
such that any pair $q,x$ as in 
(\ref{eq:approximationC}) 
satisfies $|f_j(x)|\le|x-q| < c^{-1} T^{-\lambda}$ for some $j\in \{1,\ldots,U\}$.

Recall that $F_{r,d}$ is a compact real
semi-algebraic set and that $\overline C$ is the closure  in
$\IR^m\times\IR^n$ of  a 
bound cell. Therefore, 
$F_{r,d}\times\overline C$ is compact and definable.
We will apply Proposition \ref{prop:lojasiewicz} to
 $\epsilon$ replaced by $c^{-1} T^{-\lambda}$ and to
$F_{r,d}\times\overline C$. 
After increasing $c$  
we can  make $c^{-1}T^{-\lambda} \le c^{-1}$ smaller than
$c_2^{-1}$ where 
  $c_2 =
c_2(C,r,d)>0$ is from the said proposition. 
Each
 $f_j$ from above leads to at most $c_2^{-1}$ new elements in
$F_{r,d}$. 
By abuse of notation let us also call them $f_1,\ldots, f_U$ after
renumbering; 
we have $U\le c T^\epsilon$
as we may suppose $c\ge c_1c_2^{-1}$. 
Observe that these new
polynomials approximate the original ones and could now have
transcendental coordinates.
Being in $F_{r,d}$, each $f_j$ is a sum $\sum_{J\in \cJ_{n,r}} f_{j,J}^2$ 
where $f_{j,J}$ depends only on the $r+1$ variables associated to $J$. 
The number of terms is ${n\choose r+1}$ and $\deg f_{j,J}\le d$. 
We split each $f_{j,J}$ into irreducible factors. So after replacing $c$ by a possibly larger constant
we may assume that   $U\le cT^\epsilon$ and that each $f_{j,J}$ is
irreducible with $|f_{j,J}|=1$.

Let  $\delta=\delta(C,r,d)>0$ also come from Proposition
\ref{prop:lojasiewicz}. 
This  proposition  yields $y_1,\ldots,y_U$
with $U\le c T^\epsilon$ 
such that the following holds. 
For any $x$ as above
there is $j$ and $x'\in \overline{C}_{y_j} \cap \zeroset{f_j}$
with 
\begin{equation}
\label{eq:distxy2}
  \max\{|x'-x|,|y_j-y| \} < c^{-\delta}T^{-\delta\lambda}
 \le \frac{\kappa}{2} T^{-\delta\lambda}
\end{equation}
as we may assume $c^\delta \ge 2/\kappa$.

The point $\left((f_{j,J})_{J\in \cJ_{n,r}},y_j,x'\right)$ is
a member of
the compact and definable set
\begin{equation}
\label{eq:defineZprime}
Z'=  \left\{\left((f_{J})_{J\in\cJ_{n,r}},y',x''\right) \in 
\IR[X_1,\ldots,X_n]_{d}^{n\choose r+1} \times\overline C : f_J(x'') = 0\text{ and
  $|f_J|=1$ for all $J\in \cJ_{n,r}$} \right\}.
\end{equation}
Observe that each fiber $Z'_{((f_J)_J,y')}$ is contained in 
$\zeroset{(f_J)_{J\in \cJ_{n,r}}}$ which is a real algebraic set of dimension at most
$r$ by the remark below (\ref{eq:ftupleset}). To avoid singularities we introduce the subset
\begin{equation}
\label{eq:defZprimeprime}
Z''=  \left\{\left((f_{J})_{J\in\cJ_{n,r}},y',x''\right) \in 
Z' :  \mathrm{Rk} \left(\frac{\partial f_J}{\partial
  x_j}(x'')\right)_{\substack{J\in \cJ_{n,r}\\1\le j\le n}}
< n-r \right\}.
\end{equation}
which is again compact and definable. 

We fix a cell partition $D_1\cup \cdots \cup D_{t''}=Z''$ and a cell
partition
$D_{t''+1}\cup\cdots \cup D_{t''+t'} = Z'\ssm Z''$.   
So $D_1\cup\cdots \cup D_{t''+t'}$ is a partition of $Z'$ into cells. 
Note that each cell is bounded since $Z'$ is compact. 

The point  $((f_{j,J})_{J\in \cJ_{n,r}},y_j,x')$ from Proposition \ref{prop:lojasiewicz} lies
in one of these cells, $D$, say. As already pointed out above, we have
\begin{equation}
\label{eq:dimDbound}
\dim D_{((f_{j,J})_J,y_j)} \le \dim Z'_{((f_{j,J})_J,y_j)} \le \dim
\zeroset{(f_{j,J})_{J}} \le r.
\end{equation}
We split up into two cases depending on the value of 
$r' = \dim D_{((f_{j,J})_J,y_j)}$. 

First, suppose $r' \le r-1$. In this case, we consider
$Z'$ as a definable set parametrized by $\IR^{m'}$, where
$m'= {{n\choose r+1}{n + d \choose n}+m}$. 
We can thus apply \statement{$r'$} to 
$(Z',D_1,\ldots,D_{t''+t'})$ and $\degvar,\epsilon,\kappa/2$ to
 obtain $c'$ and $\theta'$. The point $x'$
lies in a  fiber of the cell $D$.
Moreover, as
$|x-q|<c^{-1}T^{-\lambda}$, we get 
\begin{equation*}
\label{eq:xprimeminusq}
  |x'-q| \le  |x'-x|+|x-q|< c^{-\delta}  T^{-\delta\lambda}+c^{-1}T^{-\lambda}
\end{equation*}
using the first inequality of (\ref{eq:distxy2}).
We are free to increase $c$ and decrease $\theta$ to   assume 
$c^{-\delta}+c^{-1}\le {c'}^{-1}$ and $\theta\le \theta'\min\{1,\delta\}$,
respectively. 
As $T\ge 1$, the right-hand
side of (\ref{eq:xprimeminusq}) is
at most $ {c'}^{-1}T^{-\lambda'}$ where $\lambda' =
\min\{1,\delta\}\lambda$.
Observe that
$\lambda' \ge \min\{1,\delta\} \theta^{-1} \ge \theta'^{-1}$.
By induction we find that $x'$ 
has distance at most $\frac{\kappa}{2}T^{-\theta' \lambda'}$ to 
 the union of at most $c' T^\epsilon$ fibers of one of
finitely many bounded cells $D'' \subset
\IR^{l'}\times\IR^{m'}\times\IR^n$. 
More precisely, we have 
\begin{equation*}
  \max\{\dists{x',D''_{(z,f'',y'')}},|(f'',y'') - ((f_{j,J})_J,y_j)|\}
  < \frac{\kappa}{2} T^{-\theta'\lambda'}.
\end{equation*}

Note that
 $\theta'\lambda' = \theta' \min \{1,\delta\} \lambda\ge \theta\lambda$.
We may also assume $\theta\le \delta$.
Thus by (\ref{eq:distxy2}) the distance of $x$ to $D''_{(z,f'',y'')}$ is strictly less than
$\kappa T^{-\theta\lambda}$. Similarly, $|y''-y| \le |y''-y_j|+|y_j-y|
< \kappa T^{-\theta\lambda}$. 
This yields (ii). 
The non-empty fibers $D''_{(z,f'',y'')}$ have dimension at most $r'$ and are
\qacs.
We are allowed to add the $D''$ to our collection in (i). 
Thus  \statement{$r$} is established  if $r'\le r-1$.

Second, say $r' = r$. 
In this case we verify that $D$ satisfies the properties from (i).
Recall that $D$ is member of a cell partition 
of $Z'$.
A   fiber of $D$ above $\IR^{m'}$ is either 
empty or a cell of dimension $r$. 

 We claim that $D$ is not among the cells
in the partition of $Z''$. 
 Indeed, otherwise we would have $D\subset Z''$.
By (\ref{eq:defZprimeprime}) the jacobian matrix attached to the $f_{j,J}$ has rank strictly
less than $n-r$ on the fibers of $D$. 
By construction 
each $f_{j,J}$ is irreducible as $J$ runs through $\cJ_{n,r}$
Thus Lemma 
\ref{lem:singulardim} contradicts
the fact that the fiber $D_{((f_{j,J})_J,y_j)}$ has dimension $r$.

For any $((f_J)_{J\in \cJ_{n,r}},y') \in \IR^{m'}$ we have $D_{((f_J)_J,y')}\subset
A=\zeroset{(f_J)_{J}}$. As the algebraic set on the right has
dimension at most $r$  
we have
 $\dim A=r$ if
$D_{((f_J)_J,y')}\not=\emptyset$. 
In this case and since $D\subset Z'\ssm Z''$, 
the jacobian matrix attached to $(f_J)_J$ has rank at least $n-r$ at
all points of 
$D_{((f_J)_J,y')}$.
So $D_{((f_J)_J,y')}$ is a \qac{} by
Lemma \ref{lem:cellinregularset}.

Now  
\begin{equation*}
  D_{((f_{J})_J,y')} \subset Z'_{((f_J)_J,y')} \subset \overline{C}_{y'} \subset Z_{y'}
\end{equation*}
by (\ref{eq:defineZprime}) and
as the compact set $Z$  contains $C$ and hence its closure $\overline
C$ in $\IR^m\times\IR^n$. 
Thus we can add $D$ to the cells mentioned in (i). 

As
only many finitely cells appear in the partition
of $Z'$, we get at
most finitely many cells by this process. We  already assumed
$\theta \le \delta$. So  
$x$ has distance strictly less than $\frac{\kappa}{2}T^{-\theta\lambda} < \kappa
T^{-\theta\lambda}$ to $D_{(f_j,y_j)}$  by
(\ref{eq:distxy2}). Moreover, $|y-y_j|<\kappa T^{-\theta\lambda}$ by the
same inequality. This
completes the proof that \statement{$r$} holds true.
\end{proof}

\begin{theorem}
\label{thm:approxfamily}
 Let $m\in\IN_0,n,\degvar\in\IN,\epsilon > 0, \kappa\in (0,1]$ and suppose
  $Z\subset\IR^m\times\IR^n$ is  compact and definable. 
  There exist $c=c(Z,\degvar,\epsilon,\kappa)\ge 1$, 
$\theta = \theta(Z,\degvar,\epsilon) \in (0,1]$,
integers
$l_1,\ldots,l_t\in\IN_0$, and bounded cells $D_j \subset
  \IR^{l_j}\times\IR^m\times\IR^n$ for all $j\in \{1,\ldots,t\}$ with the
  following properties: 
  \begin{enumerate}
  \item [(i)] Say $D = D_j$ for some $j\in \{1,\ldots,t\},
    z\in\IR^{l_j},$ and $y\in\IR^m$. Then $D_{(z,y)}\subset Z_y$ and
    if $D_{(z,y)}\not=\emptyset$, then 
$D_{(z,y)}$ is a \qac.
\item[(ii)] If $\lambda \ge \theta^{-1}, y\in\IR^m,$ and $T\ge 1$
  there exist an integer $N\ge 1$ with 
$N\le c T^\epsilon$ and 
$(j_p,z_p,y_p) \in \{1,\ldots,j\}\times\IR^{l_{j_p}}\times\IR^m$ 
for $p\in \{1,\ldots,N\}$ such that if 
\begin{equation*}
x\in Z_y \text{ and }q\in\IQ^n(T,\degvar)
 \text{ with }|x-q|<c^{-1}T^{-\lambda}
\end{equation*}
then $\dists{x,(D_{j_p})_{(z_p,y_p)}} < \kappa T^{-\theta\lambda}$ for some $p\in
\{1,\ldots, N\}$ with $|y-y_p|<\kappa T^{-\theta\lambda}$. 
  \end{enumerate}
\end{theorem}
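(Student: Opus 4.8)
The plan is to deduce Theorem \ref{thm:approxfamily} by amalgamating the family-relative statements \statement{$r$}, for $r=0,1,\ldots,n$, which have already been proved above by induction on the fiber dimension. Each \statement{$r$} treats, inside a prescribed cell partition of the compact family $Z$, precisely those cells whose fibers over $\IR^m$ are $r$-dimensional; since every point of $Z$ lies in a unique cell of the partition, running over all admissible $r$ accounts for all of $Z$. The argument here is essentially bookkeeping: all the analytic content sits in \statement{$r$} and, beneath it, in the \L ojasiewicz Inequality for families (Proposition \ref{prop:lojasiewicz}) and the approximate Thue--Siegel construction (Proposition \ref{prop:determinant}).

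In detail, I would first reduce to the case $\epsilon\in(0,1]$, replacing $\epsilon$ by $\min\{\epsilon,1\}$ if necessary, since $T^{\min\{\epsilon,1\}}\le T^\epsilon$ for $T\ge 1$. By the Cell Decomposition Theorem, cf. Chapter 3 of \cite{D:oMin}, fix once and for all a partition $Z=C_1\cup\cdots\cup C_s$ into cells; each $C_i$ has a well-defined fiber dimension over $\IR^m$, a property of cells recalled in Section \ref{sec:notation}. For each $r\in\{0,1,\ldots,n\}$ apply \statement{$r$} to the tuple $(Z,C_1,\ldots,C_s)$ together with $\degvar$, $\epsilon$, and the given $\kappa$; this produces constants $c_r\ge 1$ and $\theta_r\in(0,1]$, together with finitely many bounded cells which we call $D^{(r)}_1,\ldots,D^{(r)}_{t_r}$, each lying in $\IR^{l}\times\IR^m\times\IR^n$ for an appropriate $l\in\IN_0$. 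Let $D_1,\ldots,D_t$ be the concatenation of these lists over $r=0,\ldots,n$, set $\theta=\min_{0\le r\le n}\theta_r\in(0,1]$, and let $c\ge 1$ be a constant with $c\ge c_r$ for all $r$ and $c\ge (n+1)s\max_r c_r$. Note that $\theta$ depends only on $Z,\degvar,\epsilon$ — through the partition, which depends only on $Z$ — while $c$ depends on $\kappa$ as well.

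Property (i) is immediate, as each $D_j$ is one of the $D^{(r)}_{j'}$ and \statement{$r$}(i) states that its non-empty fibers $D_{(z,y)}$ are \qacs{} contained in $Z_y$. For property (ii), fix $\lambda\ge\theta^{-1}$, $y\in\IR^m$, and $T\ge 1$. Since $\theta^{-1}\ge\theta_r^{-1}$ for every $r$, I would invoke \statement{$r$}(ii) for each $r$ and each cell $C_i$ of fiber dimension $r$; each invocation yields at most $c_r T^\epsilon$ tuples $(j_p,z_p,y_p)$, and the union of all these lists over all $r$ and all cells has size $N\le (n+1)s\max_r c_r\cdot T^\epsilon\le cT^\epsilon$. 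Now let $x\in Z_y$ and $q\in\IQ^n(T,\degvar)$ with $|x-q|<c^{-1}T^{-\lambda}$. The $C_i$ partition $Z$, so there is a unique $i$ with $(y,x)\in C_i$; write $r$ for its fiber dimension, so $x\in (C_i)_y$ and $|x-q|<c^{-1}T^{-\lambda}\le c_r^{-1}T^{-\lambda}$. By \statement{$r$}(ii) some tuple $(j_p,z_p,y_p)$ in our list satisfies $\dists{x,(D_{j_p})_{(z_p,y_p)}}<\kappa T^{-\theta_r\lambda}$ and $|y-y_p|<\kappa T^{-\theta_r\lambda}$; since $\theta\le\theta_r$ and $T\ge 1$ we have $T^{-\theta_r\lambda}\le T^{-\theta\lambda}$, so both bounds persist with $\theta$ in place of $\theta_r$. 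This is precisely (ii).

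The one point that genuinely requires care — as always in this circle of ideas — is uniformity: the cell partition of $Z$ must be chosen independently of $y$, $T$, and $\lambda$, so that the finitely many constants $c_r,\theta_r$ provided by \statement{$r$} do not depend on these parameters, and so that the finitely many amalgamations (over the $n+1$ values of $r$ and over the $s$ cells) cost only a bounded multiplicative factor, absorbed into $c$. One should also observe that no compactness is lost, since $Z$ is compact by hypothesis, exactly as \statement{$r$} demands. Beyond this I expect no serious obstacle.
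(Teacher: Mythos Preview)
Your proposal is correct and follows exactly the same approach as the paper: reduce to $\epsilon\le 1$, fix a cell partition of $Z$, and amalgamate the outputs of \statement{$r$} over $r=0,\ldots,n$. The paper's own proof says precisely this in two sentences; you have simply (and correctly) spelled out the bookkeeping, including the choice $\theta=\min_r\theta_r$ and the absorption of the factor coming from the finite number of cells into $c$.
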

\begin{proof}
We may assume $\epsilon\le 1$. 
The theorem then follows from \statement{$r$} ($0\le r\le n$) and
since $Z$ admits a partition 
into a finite number of cells. 
\end{proof}

We now extend this theorem to more general families of definable sets.
 To do this we introduce  the semi-algebraic homeomorphism $\varphi
:(-1,+\infty)^n\rightarrow (-\infty,1)^n$ given by
\begin{equation*}
  \varphi(x_1,\ldots,x_n) =
  \left(\frac{x_1}{1+x_1},\ldots,\frac{x_n}{1+x_n}\right)
\end{equation*}
with inverse
\begin{equation*}
  \varphi^{-1}(x_1,\ldots,x_n) = 
  \left(\frac{x_1}{1-x_1},\ldots,\frac{x_n}{1-x_n}\right).
\end{equation*}
If $x,x'\in[-1/2,+\infty)^n$, then 
$|\varphi(x)-\varphi(x')|\le 4 |x-x'|$ and if
$x,x'=(x'_1,\ldots,x'_n)\in(-\infty,1)$,
  then
  \begin{equation}
\label{eq:invarphibound}
    |\varphi^{-1}(x)-\varphi^{-1}(x')| \le
    \frac{|x-x'|}{\min_{1\le i\le n}\{1-x_i\}\min_{1\le i\le n} \{1-x'_i\}}.
  \end{equation}
 The map is not height-invariant but still satifies
\begin{equation*}
  H(\varphi(x))  \le 2 H(x)^2
\end{equation*}
for all algebraic $x\in \IR^n$ by basic height properties, cf. (\ref{eq:heightprops}). 
So $\varphi$ maps $\IQ^n(T,\degvar)$ to $\IQ^n(2T^2,\degvar)$.

\begin{lemma}
\label{lem:preimagevarphi}
  Suppose $D\subset\IR^n$ is a \qac{} of dimension $r$ with 
$D\subset (-\infty,1)^n$. Then
  $\dim\varphi^{-1}(D)=r$ and  $\varphi^{-1}(D)$ is an open
  subset of the non-singular locus of an $r$-dimensional  real algebraic set.
\end{lemma}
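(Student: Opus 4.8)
The plan is to transport the algebraic description underlying $D$ along the map $\varphi^{-1}$, which on the open box $(-\infty,1)^n$ is a semi-algebraic diffeomorphism onto $(-1,+\infty)^n$ with rational coordinate functions $x_j\mapsto x_j/(1-x_j)$. For the dimension claim it suffices to note that $\varphi^{-1}$ is definable in the ambient o-minimal structure, so its restriction to $D$ is a definable bijection onto $\varphi^{-1}(D)$, whence $\dim\varphi^{-1}(D)=\dim D=r$ by the invariance of dimension under definable bijections (Chapter 4 \cite{D:oMin}).

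For the rest, fix an $r$-dimensional real algebraic set $A\subset\IR^n$ with $D$ open in $A\ssm\sing{A}$, and let $B\subset\IR^n$ be the Zariski closure of $\varphi^{-1}(A\cap(-\infty,1)^n)$. Then $B$ is a real algebraic set containing $\varphi^{-1}(D)$, and $\dim B=r$: the set $A\cap(-\infty,1)^n$ is open in $A$ and contains the $r$-dimensional cell $D$, so it has dimension $r$, and applying the semi-algebraic homeomorphism $\varphi^{-1}$ and then passing to the Zariski closure preserves dimension (dimension theory of \cite{BCR:RAG}). It then remains to show that every $y_0\in\varphi^{-1}(D)$ is non-singular in dimension $r$ on $B$ and that $\varphi^{-1}(D)$ is open in $B\ssm\sing{B}$; both will come from one local computation.

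Given $y_0\in\varphi^{-1}(D)$, put $x_0=\varphi(y_0)\in D$. Since $x_0$ is non-singular in dimension $r$ on $A$, since $D$ is open in $A\ssm\sing{A}$, and since $\sing{A}$ is closed in $A$, I would choose $h_1,\dots,h_{n-r}\in\IR[X_1,\dots,X_n]$ vanishing on $A$ with $\mathrm{Rk}\bigl(\partial h_i/\partial X_j(x_0)\bigr)_{i,j}=n-r$, together with an open neighbourhood $W_0$ of $x_0$ in $\IR^n$ satisfying $W_0\subset(-\infty,1)^n$, $W_0\cap\sing{A}=\emptyset$, $A\cap W_0=\zeroset{h_1,\dots,h_{n-r}}\cap W_0$, and $A\cap W_0\subset D$. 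Put $\widetilde h_i(Y)=\prod_{k=1}^n(1+Y_k)^{\deg h_i}\,h_i\bigl(Y_1/(1+Y_1),\dots,Y_n/(1+Y_n)\bigr)$, which is a polynomial because $\deg h_i$ is at least the exponent of any single variable occurring in $h_i$. I then expect to verify, applying Proposition 3.3.10 \cite{BCR:RAG} to $B$ with the data $\widetilde h_1,\dots,\widetilde h_{n-r}$, $y_0$, $W_0':=\varphi^{-1}(W_0)$, that: (a) $\widetilde h_i$ vanishes on $\varphi^{-1}(A\cap(-\infty,1)^n)$ since $h_i$ vanishes on $A$, hence on its Zariski closure $B$; (b) a chain-rule computation using $h_i(x_0)=0$ and $\partial\varphi_l/\partial Y_j(y_0)=\delta_{lj}(1+y_{0,j})^{-2}$ shows that the Jacobian of $(\widetilde h_i)_i$ at $y_0$ is obtained from that of $(h_i)_i$ at $x_0$ by left and right multiplication with invertible diagonal matrices, so it again has rank $n-r=n-\dim B$; (c) for $y\in W_0'$ the factor $\prod_k(1+y_k)^{\deg h_i}$ is nonzero, so $\widetilde h_i(y)=0\iff h_i(\varphi(y))=0$, yielding $\zeroset{\widetilde h_1,\dots,\widetilde h_{n-r}}\cap W_0'=\varphi^{-1}(A\cap W_0)=\varphi^{-1}(D)\cap W_0'=B\cap W_0'$, where the last equality uses $\varphi^{-1}(D)\subset B$ together with (a). Points (a)--(c) give $y_0\in B\ssm\sing{B}$, and the equality $\varphi^{-1}(D)\cap W_0'=B\cap W_0'$, valid for every such $y_0$, shows $\varphi^{-1}(D)$ is open in $B$, hence in $B\ssm\sing{B}$.

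The main obstacle I anticipate is mostly bookkeeping: arranging a single $W_0$ that meets all four requirements simultaneously — this is exactly where the hypotheses that $D$ is open in $A\ssm\sing{A}$ and that $\sing{A}$ is closed in $A$ are used — and then carrying out the Jacobian transport of step (b) carefully, the crucial point being that the factors $1+y_{0,k}$ never vanish precisely because $\varphi^{-1}(D)\subset(-1,+\infty)^n$.
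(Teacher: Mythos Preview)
Your argument is correct and follows essentially the same route as the paper: both transport the polynomials $h_i$ through $\varphi^{-1}$ by clearing denominators, take $B$ to be the Zariski closure of $\varphi^{-1}(A\cap(-\infty,1)^n)$, check $\dim B=r$, and use the chain rule (with the nonvanishing of $1+y_{0,k}$) to preserve the Jacobian rank. The one packaging difference is that the paper invokes Lemma~\ref{lem:cellinregularset} (an invariance-of-domain argument) to conclude openness in $B\ssm\sing{B}$, whereas you verify the local zero-set equality $B\cap W_0'=\zeroset{\widetilde h_1,\ldots,\widetilde h_{n-r}}\cap W_0'$ directly from Proposition~3.3.10 \cite{BCR:RAG}; this costs you the careful arrangement of $W_0$ but avoids citing the extra lemma.
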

\begin{proof}
We have $\dim\varphi^{-1}(D)=r$ sind $\varphi$ is a homeomorphism.

  For a non-zero  $f\in \IR[X_1,\ldots,X_n]$ we set
  \begin{equation*}
 f^* = f\left( \frac{X_1}{1+X_1},\ldots,\frac{X_n}{1+X_n}\right) \prod_{j=1}^n
  (1+X_j)^{\deg_{X_j}(f)}   
  \end{equation*}
which is again a polynomial in $\IR[X_1,\ldots,X_n]$, we also set
$f^*=0$ if $f=0$.
 If $f$ vanishes on $A$, then $f^*$
vanishes on $B$, the Zariski closure of $\varphi^{-1}(A)=\varphi^{-1}(A\cap (-\infty,1)^n)$.

By hypothesis, there exists a  real algebraic set $A\subset\IR^n$ of
dimension $r$ such that $D$ is an open subset of $A\ssm
\sing{A}$. 
So $\dim A\ge \dim\varphi^{-1}(A) \ge \dim\varphi^{-1}(D) = r =\dim
A$.
Proposition 2.8.2 \cite{BCR:RAG} implies $\dim B = \dim
\varphi^{-1}(A)$ and so
 $\dim B = r$. 

We want to apply Lemma \ref{lem:cellinregularset}. First, we observe
 that $\varphi^{-1}(D)$, being homeomorphic to the cell $D$, is
 homeomorphic to an open subset of $\IR^{r}$. Say $x\in
 \varphi^{-1}(D)$, then $\varphi(x)\in D\subset A\ssm \sing{A}$. 
There are $f_1,\ldots,f_{n-r}\in
\IR[X_1,\ldots,X_n]$ that vanish on $A$ such 
that $(\frac{\partial f_i}{\partial X_j})_{1\le i\le n-r,1\le j\le n}$
has rank $n-r$ when evaluated at $\varphi(x)$.  
By the chain rule
$(\frac{\partial f^*_i}{\partial X_j})_{1\le i\le n-r,1\le j\le n}$
also has rank $n-r$ at $x$. 
We apply Lemma
\ref{lem:cellinregularset}  to $\varphi^{-1}(D)$, $B,$ and
$f^*_1,\ldots,f^*_{n-r}$ to find that $\varphi^{-1}(D)$ lies open in
$B\ssm \sing{B}$, as desired. 
\end{proof}

\begin{proof}[Proof of Theorem \ref{thm:approxfamilyclosed}]
After splitting up into the $2^n$ orthants of $\IR^n$ and switching signs we
 may assume   $Z\subset\IR^m\times[0,+\infty)^n$.

We consider the closure $Z'$ of the image of $Z$
 under $\textrm{id}_{\IR^m} \times\varphi$. This is a compact subset
 of $\IR^m\times [0,1]^n$. 
Since $Z$ is closed we mention
\begin{equation}
\label{eq:imageZ}
\bigl( \IR^m \times [0,1)^n\bigr) \cap Z' \subset (\textrm{id}_{\IR^m}\times\varphi)(Z)
\end{equation}
for later reference.

Say $ y\in\IR^m$ and  $T\ge 1$ such that there are $q\in \IQ^n(T,\degvar)$ 
and $x\in Z_y$ with $|x-q|<c^{-1}T^{-\lambda}$. Here and below
$c\ge 2$ is sufficiently large and $\theta$ is sufficiently small in terms of
the given data.
Moreover, we set $\kappa = 2^{-2\degvar-2}$.

We write $q=(q_1,\ldots,q_n)$ and $x=(x_1,\ldots,x_n)$. As $x_i\ge 0$
for all $i$, we find $q_i \ge x_i - c^{-1}T^{-\lambda}\ge -1/2$. So
$|\varphi(x)-\varphi(q)| \le 4|x-q|\le 4c^{-1}T^{-\lambda}$. 

For large $c$ and small $\theta$, by  Theorem \ref{thm:approxfamily}
applied to $Z',\degvar,\epsilon,$ and $\kappa$
we get $c'\ge 1, \theta' \in (0,1]$ and  $z'\in\IR^{l},y'\in\IR^m$ with
$\dists{\varphi(x),D_{(z',y')}}<\kappa T^{-\theta' \lambda}$ with at most
 $c'T^\epsilon$ possiblities for $(z',y')$,
$|y-y'|<\kappa T^{-\theta'\lambda}$ and  where
$D\subset \IR^l\times\IR^m \times \IR^n$ is a bounded cell. 
Moreover, only a finite number of $D$ appear and 
the fiber $D_{(z',y')}$ is a \qac.

There is an $x'' = (x''_1,\ldots,x''_n)\in D_{(z',y')}$ with 
$|\varphi(x)-x''|<\kappa T^{-\theta'\lambda}$. 
We want to show that $x'' \in [0,1)^n$.   
Observe that the entries of $x''$ are non-negative. 
We use  Liouville's Inequality to show that for any $i$ we have
  \begin{alignat*}1
1-\frac{q_i}{1+q_i} &= \frac{1}{1+q_i} \ge \frac{1}{H(1+q_i)^\degvar}
\ge \frac{1}{2^{\degvar} T^{\degvar}}
  \end{alignat*}
using again (\ref{eq:heightprops}) and $q\in\IQ^n(T,\degvar)$. 
Note 
that $|\varphi(q)-x''| <
4c^{-1}T^{-\lambda} + \kappa T^{-\theta'\lambda}$ and hence
\begin{alignat}1
\label{eq:xprimeprimebound}
  x''_i < \frac{q_i}{1+q_i} + \frac{4}{cT^{\lambda}}+\frac{\kappa}{T^{\theta'\lambda}}
\le 1-\frac{1}{2^{\degvar} T^{\degvar}} + \frac{4}{cT^{1/\theta}}+\frac{1}{2^{2\degvar+2} T^{\theta'/\theta}}
\le 1-\frac{1}{2^{\degvar+1}T^{\degvar}}<1
\end{alignat}
by our choice of $\kappa$, for large $c$ and small $\theta$. So $x''\in [0,1)^n$ as
  desired. Using a similar argument we find
  \begin{alignat}1
\label{eq:varphixbound}
    \frac{x_i}{1+x_i} \le \frac{q_i}{1+q_i} + \frac{4}{cT^{\lambda}}
\le 1 - \frac{1}{2^{\degvar}T^{\degvar}}  + \frac{4}{cT^{1/\theta}} \le
1-\frac{1}{2^{\degvar+1}T^{\degvar}}
  \end{alignat}
for large $c$ and small  $\theta$. 

Observe that $D\subset\IR^l\times\IR^m\times [0,1]$. So the
intersection $D\cap \IR^l\times\IR^m\times [0,1)^n$ is open in $D$. 
It is a finite union of cells
$D_1\cup\cdots\cup D_s$ with $\dim D_i = \dim D$ for all $1\le i\le s$
by Lemma \ref{lem:coveropenincell}.

Say $1\le i\le s$. The dimension of the cell $D_i$ equals the sum of the dimension of its
projection to $\IR^l\times\IR^m$ and the fiber dimension over $\IR^{l+m}$. 
The same holds for the cell $D\supset D_i$. Thus each $D_i$ has the same fiber
dimension as $D$ over $\IR^{l+m}$. We find that any
fiber of $D_i$ above a point in $\IR^l\times\IR^m$ lies open in the
respective fiber of $D$ by Lemma 1.14 in Chapter 4 \cite{D:oMin}. 
Therefore, all non-empty fibers of $D_i$ are \qacs{}. 

The first inequality in 
\begin{equation*}
  |x-\varphi^{-1}(x'')| = 
|\varphi^{-1}(\varphi(x))-\varphi^{-1}(x'')|\le 
2^{2\degvar + 2}T^{2\degvar} {|\varphi(x)-x''|}
<  2^{2\degvar +2}\kappa T^{2\degvar-\theta'\lambda}=T^{2\degvar-\theta'\lambda}
\end{equation*}
follows from (\ref{eq:xprimeprimebound}) and (\ref{eq:varphixbound})
applied to (\ref{eq:invarphibound}). 

Now $\varphi^{-1}(x'')$ lies in the preimage
$\varphi^{-1}((D_i)_{(z',y')})$. 
The distance of $x$ to this preimage is strictly less than
$T^{2\degvar-\theta'\lambda}$. 
Observe that $2\degvar-\theta' \lambda \le -\theta\lambda$
for small $\theta$ as $\lambda\ge \theta^{-1}$. 
So the distance is strictly less than 
  $T^{-\theta \lambda}$.

Observe that $(D_i)_{(z',y')}\subset (-\infty,1)^n$ and 
 $\varphi^{-1}((D_i)_{(z',y')})$ is connected as $\varphi$
is a homeomorphism. 
By Lemma \ref{lem:preimagevarphi}
this preimage
 satisfies the conditions in (i) of the
assertion. By (\ref{eq:imageZ}) the preimage lies in the respective
fiber
$Z_{y'}$ of $Z$. This completes the proof. 
\end{proof}

\section{Proof of Theorems  \ref{thm:approx2},
  \ref{thm:familybasicversion}, \ref{thm:approx}, and
  \ref{thm:approxqac}}
\label{sec:proofs}

\begin{proof}[Proof of Theorem \ref{thm:approxqac}]
The theorem follows from Theorem \ref{thm:approxfamilyclosed} applied to
the trivial family $Z=X\subset\IR^n$ where $m=0$.
\end{proof}

\begin{proof}[Proof of Theorem \ref{thm:approx2}]
By Northcott's Theorem $\IQ^n(2^{2+\degvar^2},\degvar)$ is
finite. So we may assume $T\ge
2^{2+\degvar^2}$ without loss of generality.

Let $c>0$ and $3\degvar^2 \theta$ be as in  Theorem \ref{thm:approxqac} applied
to $X,\degvar,$ and $\epsilon$.
Let $\lambda \ge \theta^{-1}$. 

Suppose $q\in\IQ^n(T,e)$ does not lie in $\nbhd{\alg{X},T^{-\theta\lambda}}$ and that there is 
$x\in X$ with $|x-q|<T^{-\lambda}$. Then $x'\in D$ and
$|x'-x|<T^{-3\degvar^2 \theta\lambda}$ for one among at most $cT^{\epsilon}$ sets
$D\subset X$ as in (i) of Theorem \ref{thm:approxqac}. As $3\degvar^2 \theta \le
1$ we have 
\begin{equation}
\label{eq:distqx}
 |x'-q|\le |x'-x|+|x-q|<T^{-3\degvar^2\theta\lambda}+T^{-\lambda} 
\le 2T^{-3\degvar^2 \theta\lambda}. 
\end{equation}

If $\dim D\ge 1$, then $\alg{D}=D$ by Lemma \ref{lem:Dalg} and so
$x'\in \alg{X}$. 
Recall $T\ge 2$ and $\theta\lambda\ge 1$.
Hence
 (\ref{eq:distqx}) imples $|x'-q|<2T^{-3\degvar^2 \theta\lambda}\le T^{-\theta\lambda}$ and this 
contradicts $q\not\in \nbhd{\alg{X},T^{-\theta\lambda}}$. 

Therefore, $\dim D=0$ and thus $D=\{x'\}$ as $D$ is connected.  Now
suppose a second $q'\in\IQ^n(T,\degvar)$ with $q'\not=q$  also satisfies
$|x'-q'|<2T^{-3\degvar^2 \theta\lambda}$. 
Then $|q'-q|<4T^{-3\degvar^2 \theta\lambda}$. As $q'\not=q$ Liouville's Inequality gives
$|q'-q|\ge (2H(q')H(q))^{-\degvar^2}\ge 2^{-\degvar^2}T^{-2\degvar^2}$. Therefore, 
$T^{3\degvar^2 \theta\lambda - 2\degvar^2}<2^{2+\degvar^2}$. But this contradicts 
$\theta\lambda\ge 1$ and $T\ge 2^{2+\degvar^2}$. 

We have shown that at most one  algebraic point of height at most $T$
and degree at most $\degvar$
approximates a singleton $D$. Thus the number of $q$ in question is at
most $cT^\epsilon$. 
\end{proof}

\begin{proof}[Proof of Theorem \ref{thm:familybasicversion}]
Instead of applying Theorem \ref{thm:approxqac}  as before we require
Theorem \ref{thm:approxfamilyclosed}, which holds for families, directly. The proof 
 is then very similar to the proof of Theorem \ref{thm:approx2}. 
\end{proof}

\begin{proof}[Proof of Theorem \ref{thm:approx}]
We use  Theorem \ref{thm:approxqac}. Indeed, any $x$ as in the set on the left of
(\ref{eq:counting}) lies in $\nbhd{(D_j)_z, T^{-\theta\lambda}}$ for one of at
most $cT^\epsilon$ sets $(D_j)_z$ as in (i) of Theorem
\ref{thm:approxqac}. 

If one particular $(D_j)_z$ has positive dimension, then it equals its
algebraic locus by Lemma \ref{lem:Dalg}. 
 In particular, $x\in
\nbhd{\alg{X},T^{-\theta\lambda}}$, which is impossible. 
So  $(D_j)_z$ has dimension
$0$ and, being connected, is  a singleton. This yields   (\ref{eq:counting})
when taking the $x_i$ to be the points appearing in the $(D_j)_z$. 
\end{proof}

\section{Application to Sums of Roots of Unity}
\label{sec:apps}

\begin{proof}[Proof of Theorem \ref{thm:pvaries}]
Our proof is by induction on $n$,   the statement being elementary if $n=1$. So say $n\ge 2$ and  let 
\begin{equation*}
  X = \left\{(x_1,\ldots,x_n) \in [0,1]^n : a_0 +  a_1 e^{2\pi \sqrt{-1}
    x_1} + \cdots + a_n e^{2\pi \sqrt{-1} x_n} = 0 \right\}
\end{equation*}
which is compact and definable in the polynomially bounded o-minimal structure $\IRan$.

We will choose $c$ and $\lambda$ in the argument below. 
Say $\zeta_j = e^{2\pi \sqrt{-1} q_j}$ with $q_j\in  \frac 1p \IZ\cap [0,1)$
such that
$0<|a_0+a_1\zeta_1+\cdots + a_n\zeta_n|< c^{-1}p^{-\lambda}$ and where
$p\le T$ is  a prime.
We may assume $p>T^\epsilon$ as there are at most $T^\epsilon$ primes
bounded by $T^\epsilon$.  
So
$|a_0+a_1\zeta_1+\cdots + a_n\zeta_n|< c^{-1}T^{-\epsilon\lambda}$.

If $c$ is large enough in terms of $(a_0,\ldots,a_n)$, then at least one among
$\zeta_1,\ldots,\zeta_n$ has order $p$.
For large $c$ the \L ojasiewicz Inequality  from Theorem
\ref{thm:lojasiewicz}  implies
\begin{equation*}
  \dists{q,X}<  T^{-\epsilon\lambda\delta}
\end{equation*}
where $q = (q_1,\ldots,q_n)$ and where  $\delta>0$  depends only on $X$.

We suppose $\epsilon\lambda\delta\ge \theta^{-1}$ 
 with $\theta$ from  
 Theorem \ref{thm:approx2} applied to $X,\degvar=1$, and $\epsilon$. There are two cases. 

In the first case $q$ is 
not in the $T^{-\theta\epsilon\lambda\delta}$-neighborhood around $\alg{X}$. 
As $p$ divides the denominator of $q$  it is among at most
$cT^\epsilon$ 
possibilities and we are done in this case. 

In the second case there is $x'=(x'_1,\ldots,x'_n)\in \alg{X}$ with
$|q-x'|<T^{-\theta\epsilon\lambda\delta}$. 

The locus $\alg{X}$ plays an important role in Zannier's proof
strategy
of the
Manin-Mumford Conjecture presented in his joint work with Pila \cite{PilaZannier}. Indeed, it is a well-known consequence of 
 Ax's Theorem, Corollary 2  \cite{AxSchanuel}, that
a non-trivial subsum 
\begin{equation*}
 a_0 + \sum_{j\in J} a_j e^{2\pi \sqrt{-1} x'_j}=0
\end{equation*}
vanishes for some
non-empty set $J \subsetneq \{1,\ldots,n\}$.
 The corresponding sum over
coordinates of $q$ must be small, i.e.
\begin{equation*}
  \left| a_0 +\sum_{j\in J} a_j \zeta_j\right| 
\le 2\pi n \max_{1\le j\le n} \{|a_j|\}|q-x'|
< c' T^{-\theta\epsilon\lambda\delta} 
\end{equation*}
where $c'>0$ depends only on $(a_1,\ldots,a_n)$. 

Let $\lambda'$ be the maximal value of $\lambda$ for this theorem applied by
induction to a sum involving at most $n-1$ roots of unity
and a subset of the $a_0,\ldots,a_n$ as coefficients. 
We may assume
$\theta\epsilon\lambda\delta \ge 1 + \lambda'$ and if $c''$ comes from
this theorem applied by induction we may also assume
that $T\ge c' c''$. Hence
\begin{equation*}
    \left| a_0 +\sum_{j\in J} a_j \zeta_j\right| < {c''}^{-1}
    T^{-\lambda'} \le {c''}^{-1} p^{-\lambda'}.
\end{equation*}

Say $a_0+\sum_{j\in J} a_j\zeta_j\not=0$. Then
by induction   there are at most $cT^\epsilon$ possibilites 
for $p$, if $c$ is sufficiently large. 

Finally, if $a_0+\sum_{j\in J} a_j\zeta_j=0$, then 
$\sum_{j\in I} a_j\zeta_j\not=0$ where
$I = \{1,\ldots,n\}\ssm J$. Say $j_0\in I$, then
\begin{equation*}
0<  \left|a_{j_0}+\sum_{j\in I\ssm\{j_0\}} a_j \zeta_j
  \zeta_{j_0}^{-1}\right|
= \left|a_0 +\sum_{j=1}^n a_j\zeta_j\right|< c^{-1}p^{-\lambda}. 
\end{equation*}
then, again by
induction on $n$, we conclude the claim if $\lambda\ge \lambda'$
and if $c$ is large enough. 
\end{proof}
\bibliographystyle{amsplain}  
\bibliography{literature}

\vfill
\medskip

\end{document}